\documentclass[10pt,a4paper]{article}


\usepackage[section]{placeins}
\usepackage{amsmath}
\usepackage{amsthm}
\usepackage{amssymb}
\usepackage{amsfonts}
\usepackage{lmodern}
\usepackage{bm}
\usepackage{mathtools}
\usepackage[T1]{fontenc}
\usepackage{framed, color}
\usepackage{dsfont}
\usepackage{graphicx}
\usepackage{hyperref}
\usepackage{latexsym}
\usepackage{mathrsfs}
\usepackage{tikz}
\usepackage{mathdots}
\usepackage{verbatim}
\usepackage{epstopdf}
\usepackage[a4paper, left=3cm, 
		     right=3cm, top=2.97cm, 
		    ]{geometry}


\theoremstyle{plain}
\newtheorem{theorem}{Theorem}[section]
\newtheorem{proposition}[theorem]{Proposition}
\newtheorem{lemma}[theorem]{Lemma}

\newtheorem{remark}[theorem]{Remark}
\newtheorem{definition}[theorem]{Definition}


\newcommand{\RR}{\mathbb R}
\newcommand{\ZZ}{\mathbb Z}
\newcommand{\TT}{\mathbb T}
\newcommand{\CC}{\mathbb C}
\newcommand{\NN}{\mathbb N}

\newcommand{\U}{\mathcal{U}}
\newcommand{\B}{\mathcal{B}}
\renewcommand{\B}{\mathcal{B}}
\renewcommand{\L}{\mathcal{L}}
\renewcommand{\S}{\mathcal{S}}
\newcommand{\tL}{\tilde{\mathcal{L}}}

\newcommand{\tT}{\tilde{T}}
\newcommand{\D}{\mathcal{D}}

\newcommand{\X}{\mathcal{X}}
\newcommand{\bu}{\bar{u}}
\newcommand{\bU}{\bar{U}}
\newcommand{\bv}{\bar{v}}

\newcommand{\bN}{\bar{N}}
\newcommand{\btheta}{\bar{\theta}}

\newcommand{\kL}{\mathfrak{L}}

\newcommand{\tb}{\tilde{b}}
\newcommand{\ul}{\underline{\lambda}}

\newcommand{\rstar}{r_*}
\newcommand{\tend}{\tau}
\newcommand{\runfile}{\texttt{runproofs.m}}
\renewcommand{\d}{\mathrm{d}}
\newcommand{\uin}{{u^{\mathrm{in}}}}
\newcommand{\vin}{{v^{\mathrm{in}}}}
\newcommand{\ds}{\displaystyle}
\newcommand{\pprod}{\mathrm{prod}}
\newcommand{\stat}{\mathrm{stat}}

\newcommand{\bydef}{\,\stackrel{\mbox{\tiny\textnormal{\raisebox{0ex}[0ex][0ex]{def}}}}{=}\,} 

\DeclareMathOperator{\diag}{diag}
\DeclareMathOperator{\bbox}{Box}

\newcommand{\Nu}{N_{u}}
\newcommand{\NL}{N_{\text{\tiny{$\L$}}}}
\newcommand{\Nin}{\Nu}


\title{A simple rigorous integrator for semilinear parabolic PDEs}
\author{Jan Bouwe van den Berg \thanks{VU Amsterdam, Department of Mathematics, Amsterdam Center for Dynamics and Computation, De Boelelaan 1081, 1081 HV Amsterdam, The Netherlands. \texttt{janbouwe@few.vu.nl}} $\quad$ Maxime Breden \thanks{CMAP, CNRS, \'Ecole polytechnique, Institut Polytechnique de
Paris, 91120 Palaiseau, France. \texttt{maxime.breden@polytechnique.edu}}} 
\date{\today}

\begin{document}

\maketitle

\begin{abstract} 
Simulations of the dynamics generated by partial differential equations (PDEs) provide approximate, numerical solutions to initial value problems. Such simulations are ubiquitous in scientific computing, but the correctness of the results is usually not guaranteed. We propose a new method for the rigorous integration of parabolic PDEs, i.e., the derivation of rigorous and explicit error bounds between the numerically obtained approximate solution and the exact one, which is then proven to exist over the entire time interval considered. These guaranteed error bounds are obtained a posteriori, using a fixed point reformulation based on a piece-wise in time constant approximation of the linearization around the numerical solution.
Our setup leads to relatively simple-to-understand estimates, which has several advantages. Most critically, it allows us to optimize various aspects of the proof, and in particular to provide an adaptive time-stepping strategy. In case the solution converges to a stable hyperbolic equilibrium, we are also able to prove this convergence, applying our rigorous integrator with a final, infinitely long timestep. We showcase the ability of our method to rigorously integrate over relatively long time intervals, and to capture non-trivial dynamics, via examples on the Swift--Hohenberg equation, the Ohta--Kawasaki equation and the Kuramoto--Sivashinsky equation. We expect that the simplicity and efficiency of the approach will enable generalization to a wide variety of other parabolic PDEs, as well as applications to boundary value problems.
\end{abstract}

\begin{center}
{\bf \small Keywords} \\ \vspace{.05cm}
{ \small Parabolic PDEs $\cdot$ Initial value problems $\cdot$ Computer-assisted proofs $\cdot$ A posteriori error estimates}
\end{center}

\begin{center}
{\bf \small Mathematics Subject Classification (2020)}  \\ \vspace{.05cm}
{\small 35K15 $\cdot$ 35K30 $\cdot$ 35K58 $\cdot$  37L65 $\cdot$ 65G20 $\cdot$ 65M15 $\cdot$ 65M70} 
\end{center}



\section{Introduction}
\label{sec:introduction}

In recent years, computer-assisted proofs have proven to be an increasingly powerful tool to study partial differential equations (PDEs) and their often intricate dynamics~\cite{BerLes15}. Numerical experiments have long played an important role in helping us
discovering and understanding new behaviors generated by PDEs, and computer-assisted proofs can sometimes be used to get mathematically rigorous and guaranteed theorems out of such simulations, especially outside of perturbative or asymptotic regimes where more traditional pen-and-paper techniques often shine. 

Going back to the pioneering works~\cite{Nak88,Plu92}, many computer-assisted proofs techniques have been developed for elliptic PDEs, see for instance~\cite{BerWil19,AriGazKoc21,LiuNakOis22,Wun22,CadLesNav24} for some recent works, as well as the book~\cite{NakPluWat19} and the references therein for a broader overview. Other types of specific solutions have also been studied with computer-assistance, such as traveling waves~\cite{AriKoc15,BreHorMcKPlu06,BerBreLesMur18,EncGomVer18,BerShe20,Cad24}, periodic solutions~\cite{Zgl04,AriKoc10bis,GamLes17,FigLla17,BerBreLesVee21}, even for some ill-posed problems~\cite{CasGamLes18}, or more recently self-similar solutions~\cite{BucCaoGom22,CheHou25,BreChu24,DahFig24,DonSch24,HouWanYan25}. For other types of specific solutions we refer to the survey~\cite{Gom19}. Although these are time-dependent solutions, in many cases the problem is reformulated as an elliptic one after using a suitable Ansatz.

A different approach is needed in order to study generic initial value problems for parabolic PDEs. In this work, we focus on scalar equations of the form
\begin{align}
	\begin{cases}
		\dfrac{ \partial u}{ \partial t} = 
		(-1)^{J+1} \dfrac{ \partial^{2J} u}{ \partial x^{2J}} + \displaystyle\sum_{j=0}^{2J-1} \dfrac{\partial^{j} g^{(j)} (u)}{\partial x^j} ,
		& t \in (0,\tau], \ x \in \TT, \\[2ex]
		u (0, x) = u^{\mathrm{in}} ( x ), & x \in \TT, 
	\end{cases}
	\label{eq:PDE}
\end{align}
where $\TT = \RR/(2\pi\ZZ)$ is the torus, $J\in\NN_{\geq 1}$ is a positive integer, $\tau>0$ determines the integration time, $g^{(j)}:\RR\to\RR$ are given polynomial functions for $j=0,\ldots,2J-1$, and $\uin: \TT \rightarrow \RR$ is a smooth function. 

\begin{remark}
We restrict to equations of the form~\eqref{eq:PDE} for the sake of simplifying the presentation, but the approach presented in this paper is applicable in a broader setting. In particular, it generalizes to systems in a completely straightforward manner, as well as to PDEs posed on higher dimensional tori.
Moreover, as soon as the initial data has some symmetries (e.g., is even or odd) that are preserved by the PDE, then these symmetries can be incorporated into our setup and we can analyze dynamics restricted to a symmetry-induced invariant subspace. This means we are also able to deal with PDEs posed on a bounded interval with homogeneous Dirichlet or Neumann boundary conditions, as will be shown in examples. 
\end{remark}

Within the computer-assisted proof community, techniques allowing to rigorously solve initial value problems of the form~\eqref{eq:PDE} are often described as \emph{rigorous PDE integrators}. One of our main motivation for developing rigorous integrators is to then be able to study boundary value problems in time, in conjunction with rigorous enclosures for local stable and unstable manifolds that have been developed recently~\cite{MirRei19,BerJaqMir22}, in order to obtain connecting orbits in parabolic PDEs. Rigorous PDE integrators can also be used to rigorously compute Poincaré maps, see~\cite{CAPD} and the references therein.

One class of rigorous PDE integrators, which uses completely different techniques compared to the ones discussed in this manuscript, is the one developed in~\cite{Zgl04,Zgl10}. It is based on the validated integration of a finite dimensional system of ODEs using Lohner-type algorithms developed in 
\cite{Zgl02}, and on the notion of self-consisted bounds introduced in~\cite{ZglMis01}. This methodology was further developed in~\cite{Cyr14,WilZgl25}, and has been used to rigorously study, among other things, globally attracting solutions in the one dimensional Burgers equation~\cite{CyrZgl15}, heteroclinic connections in the one-dimensional Ohta-Kawasaki model~\cite{CyrWan18} and recently to prove chaos in the Kuramoto-Sivashinky equation~\cite{WilZgl20,WilZgl24}.

A second family of rigorous PDE integrators, to which the approach proposed in this paper belongs, is based on a fixed point reformulation which allows to prove, a posteriori, that a true solution exists in a small neighborhood of a numerically computed approximation. In order to first describe the main ideas underlying our approach informally, let us consider an abstract evolution equation (still assumed to be of parabolic type)
\begin{align}
\label{eq:abstract_evolution}
\begin{cases}
\partial_t u = F(u),\qquad t\in(0,\tau], \\
u(t=0) = \uin.
\end{cases}
\end{align}
Since we aim to first keep the discussion simple, we purposely do not yet specify functions spaces and norms. However, as the reader might have guessed, when turning the following informal discussion into actual estimates, choosing proper spaces and norms will prove critical.

A natural way of rewriting the evolution equation~\eqref{eq:abstract_evolution} as a fixed point problem is to consider a (possibly time-dependent) linear operator $\L(t)$ generating a $C_0$-semigroup, and $\gamma(t,u):= F(u) - \L(t) u$, which yields
\begin{align}
\label{eq:abstract_evolution_split}
\begin{cases}
\partial_t u = \L(t) u + \gamma(t,u),\qquad t\in(0,\tau] \\
u(t=0) = \uin.
\end{cases}
\end{align}
We denote by $\U=\U(t,s)$ the evolution operator (sometimes also called solution operator) associated to $\L$, i.e., the operator such that $\U(t,s)\varphi$ for all $t\geq s\geq 0$ is the solution of
\begin{align*}
\begin{cases}
\partial_t u = \L(t)u,\qquad t>s \\
u(t=s) = \varphi.
\end{cases}
\end{align*}
The solution $u$ of~\eqref{eq:abstract_evolution} then corresponds to a fixed point of the operator $T$ given by
\begin{align}
\label{eq:T_general}
T(u)(t) = U(t,0) \uin + \int_0^t U(t,s) \gamma(s,u(s)) \d s,\qquad t\in[0,\tau].
\end{align}
Given an approximate solution $\bu$ of~\eqref{eq:abstract_evolution}, our goal is to prove that there exists an exact solution $u$ near $\bu$ and to provide an explicit and guaranteed error bound between $u$ and $\bu$, by showing that an operator $T$ of the form~\eqref{eq:T_general} is a contraction on a small neighborhood of $\bu$. Note that local well-posedness is known for equations of the form~\eqref{eq:PDE}, but finite-time blow up might occur, hence even the mere existence of a solution up to time $\tau$ is not guaranteed a priori.

In the construction of $T$ in~\eqref{eq:T_general}, the choice of $\L$ is critical. Indeed, for an arbitrary $\L$ there is no reason for $T$ to be contracting near $\bu$, especially if $\tau$ is somewhat large. However, since the Fréchet derivative of $T$ at $\bu$ is
\begin{align*}
DT(\bu)h(t) = \int_0^t U(t,s) D_u\gamma(s,\bu(s))h(s) \d s,\qquad t\in[0,\tau],
\end{align*}
a natural choice is to take $\L(t)=DF(\bu(t))$, because it yields $D_u\gamma(t,\bu(t)) = 0$ for all $t$, hence $DT(\bu) = 0$. In that case, provided $\bu$ is an accurate enough approximate solution, i.e., that $T(\bu)-\bu$ is small enough, $T$ should indeed be a contraction on a small neighborhood of $\bu$. Nonetheless, actually proving this requires getting somewhat sharp quantitative estimates on the evolution operator $\U$, which is a challenging task, especially because $\L(t)=DF(\bu(t))$ depends on time in a non-trivial way via the approximate solution $\bu$. Despite these challenges, this approach was successfully used in~\cite{TakMizKubOis17,TakLesJaqOka22,DucLesTak25}.\footnote{To be precise, these works take advantage of the parabolic/dissipative structure of~\eqref{eq:PDE} to use a simple approximation of $DF(\bu(t))$ for the high frequencies, similar to the one that is going to be used in this paper, but for the low frequencies they do indeed take $\L(t)=DF(\bu(t))$.} Very similar ideas, but presented with a different viewpoint, can be found in~\cite{HasKimMinNak19,HasKinNak20}, see also~\cite[Chapter 5]{NakPluWat19}. 

A somewhat radical alternative, which in fact corresponds to what was done in some of the earliest rigorous PDE integrators~\cite{Nak91,AriKoc10bis}, is to simply take $\L=(-1)^{J+1} \tfrac{ \partial^{2J}}{ \partial x^{2J}}$, or $\L = DF(0)$. In those cases, $\L$ no longer depends on time, and the evolution operator is simply $\U(t,s) = e^{(t-s)\L}$, which is therefore much easier to study (especially since such an $\L$ is diagonal in Fourier space). However, the downside is that we can only expect the corresponding fixed point operator $T$ to be contracting near $\bu$ when $\tau$ is small.

In this paper we take an intermediate path, namely we take for $\L$ a piece-wise constant approximation of $DF(\bu(t))$. The rational behind this choice is the following. We recall that, when $\L$ is taken equal to $DF(\bu(t))$, we have $DT(\bu) = 0$. However, this is much stronger than what we actually need in order to apply the contraction mapping, namely that $\left\Vert DT(u)\right\Vert \leq C < 1$ for all $u$ in some small neighborhood of $\bu$. Therefore, there is some room to play with in the definition of $\L$, which should be used to obtain an evolution operator $\U$ which is as easy to analyze as possible, and our choice of $\L$ is guided by these considerations.
Indeed, provided the subdivision in time is fine enough, choosing $\L$ to be a piecewise constant approximation of $DF(\bu(t))$ will cause the map $T$ to still be contracting around $\bu$ for long integration times $\tau$. Moreover, by having $\L$ piece-wise time-independent we retain the ability to get an explicit description of the evolution operator $\U$, for which estimates are then much easier to obtain. 

The simplicity of the setup has several concrete advantages. First, the estimates are relatively straightforward to understand and to code. This makes it easier to focus on optimizing critical aspects of the bounds and the implementation. One crucial example is determining a time grid (on which the operator $\L$ is piece-wise constant) that is favourable for the proof to go through, see Section~\ref{sec:parameters}. This allows us to introduce an adaptive time-stepping strategy, and to automatically select some discretization parameters on each subinterval, which also contributes to getting computationally cheaper proofs. This is of particular importance when proving long time segments, as well as when applying the integrator to study boundary value problems (discussed further below). Another benefit of the simplicity is that we believe it will be possible to extend the method to PDEs outside the family~\eqref{eq:PDE}, for example reaction-diffusion systems, multiple spatial dimensions, and equations with more complicated nonlinear lower-order terms.

\medskip

We note that in~\cite{BerBreShe24} we already did take for $\L$ a piece-wise constant approximation of $DF(\bu(t))$, but then we did not directly proceed to prove that the corresponding fixed point operator $T$ defined in~\eqref{eq:T_general} was a contraction near $\bu$. Instead, we first tried  to compensate for the fact that our $\L$ was not exactly equal to $DF(\bu(t))$ by first turning the fixed point problem into a zero finding problem $\tilde{F}(u) = u - T(u)$, and then went back to a fixed point problem via the Newton-Kantorovich approach, using an approximate inverse $A$ of $D \tilde{F}(\bu)$ and considering $\tT(u) = u - A\tilde{F}(u)$. 
However, we are now convinced that this detour was ill-advised, for several reasons. First, this reformulation is in fact not necessary, since, as explained above, the fixed point operator $T$ from~\eqref{eq:T_general} should already be contracting if the subdivision used to define $\L$ is taken fine enough. Second, in order to use a non-trivial approximate inverse $A$, one needs a finite dimensional projection, i.e., to not only truncate Fourier modes but also to choose a projection in time. We need such a projection anyway to find the approximate solution, so this is the one we used: the finite dimensional projection in time is defined as taking an interpolation polynomial at Chebyshev nodes. However, once this projection is used for defining the approximate inverse, it also has to be incorporated in the norm, which has to be split between a finite part in time and a tail part in time (see~\cite[Section 3.2]{BerBreShe24} for details). This splitting then leads to much more involved space-time estimates, which turn out not to be very sharp. In contrast, in the present work all the critical estimates for showing the contractivity of $T$ boil down to weighted $\ell^1$ operator norm bounds in Fourier space only, for which simpler and very sharp estimates can be obtained.
Finally, the computational costs associated with the construction and storage of the approximate inverse $A$ are often the practical bottleneck of such computer-assisted proofs. Therefore, getting rid of the approximate inverse significantly increases the applicability of the method. In many situations this is not possible, because the approximate inverse is the only reason why the fixed point operator we study should be contracting. However, for initial value problems specifically, we can in fact build contracting fixed point operators without using a large approximate inverse, as was also noticed in~\cite{DucLesTak25}. 

The rigorous integrator introduced in the current paper outcompetes our previous attempt from~\cite{BerBreShe24} in every aspect. Not only is it much faster and uses much less memory, see Section~\ref{sec:SH} for specific numbers, it also allows us to prove solutions of the initial value problem for significantly longer integration times and solutions exhibiting more interesting dynamical features. Examples can be found in Theorem~\ref{th:OK} below and in Section~\ref{sec:examples}.

As already mentioned, there are many situations where computing an approximate inverse of (a finite dimensional projection) of the linearized problem is a necesary step in a computer-assisted proof. Indeed, when the choice of $\L$ is less sophisticated (e.g., based on the leading order term in the PDE only), then introducing the approximate inverse $A$ does make sense. 
We note that yet another rigorous integrator, which needs an approximate inverse but has the advantage of being fully spectral, was proposed in~\cite{CyrLes22} and significantly improved in~\cite{CadLes25}.

\medskip

Our main motivation to develop this rigorous integrator for initial value problems of PDEs, is to study \emph{boundary} value problems, in particular those associated to saddle-to-saddle connecting orbit problems. As will be shown in an upcoming work, the fact that we avoid using a large approximate inverse can mostly be preserved when using the rigorous integrator to solve saddle-to-saddle connecting orbit problems for PDEs of the form~\eqref{eq:PDE}.

The slightly simpler situation of orbits converging to a stable equilibrium is also of interest, and of great relevance for many applications. In the context of computer-assisted proofs, what is often done is to first validate the steady state of interest independently, then prove it is stable, and subsequently derive an explicit neighborhood of the steady state such that any solution entering that neighborhood converges to the steady state. This was first accomplished in~\cite{Zgl02bis}, and then in many subsequent works, sometimes with slightly different techniques. After that, one uses a rigorous integrator to prove that an orbit possibly starting far away from the stable equilibrium enters this neighborhood, see~\cite{DucLesTak25,CadLes25} for some recent examples. In all these works, the stable steady state under consideration is hyperbolic, but this strategy can also be adapted in the presence of a center manifold~\cite{JaqLesTak22}. These tools can also sometimes be combined with a priori dissipation estimates in order to conclude that a steady state is globally attractive~\cite{Cyr15}.

In this work, we show that, in the presence of a locally attracting hyperbolic stable steady state, our rigorous integrator is sufficiently robust to allow us to rigorously integrate all the way to $\tau=+\infty$. Moreover, we show that, if this integration step to $\tau=+\infty$ is successful, then we automatically get the existence of a stable steady state, together with an explicit neighborhood in which all solutions converge towards the steady state, 
and an explicit spectral gap. As mentioned above, we emphasize that each of these results could have been obtained independently with existing techniques. However, the fact the existence and local stability of a steady state can be obtained ``for free'' from our rigorous integration is at the very least convenient in practice, and is also a good confirmation that our rigorous integrator incorporates some important dynamical features of the flow.

Below is an example of the kind of result we can obtain with our approach, applied to the Ohta--Kawasaki equation with homogeneous Neumann boundary conditions
\begin{align}
		\label{eq:OK}
		\begin{cases}
			\dfrac{ \partial u }{ \partial t} = 
			-\displaystyle\frac{1}{\gamma^2}\dfrac{ \partial^{4} u}{ \partial x^{4}} -\dfrac{ \partial^{2} }{ \partial x^{2}}(u-u^3) -\sigma(u-m)
			, & (t,x) \in (0,\infty) \times (0,L), \\[2ex]
			\dfrac{ \partial u}{ \partial x}(t,0)=\dfrac{ \partial u}{ \partial x}(t,L)=\dfrac{ \partial^{3} u}{ \partial x^{3}}(t,0)=\dfrac{ \partial^{3} u}{ \partial x^{3}}(t,L)=0,  & t \in [0,\infty),\\[2ex]
			u (0, x ) = \uin ( x ), & x \in [0,L], 
	\end{cases}
	\end{align}
which models the evolution of diblock copolymer melts~\cite{OhtKaw86}.
The following theorem rigorously validates the approximate solution depicted in Figure~\ref{fig:OK} on the \emph{infinite} time interval $[0,\infty)$.
We note that the solution exhibits interesting dynamics with a long transient, likely spending time near a saddle point, before progressing to the stable equilibrium. The rigorous integrator introduced in the current paper provides a proof that was clearly out of reach of our previous approach in~\cite{BerBreShe24}.
\begin{figure}
\centering
\includegraphics[scale=0.45]{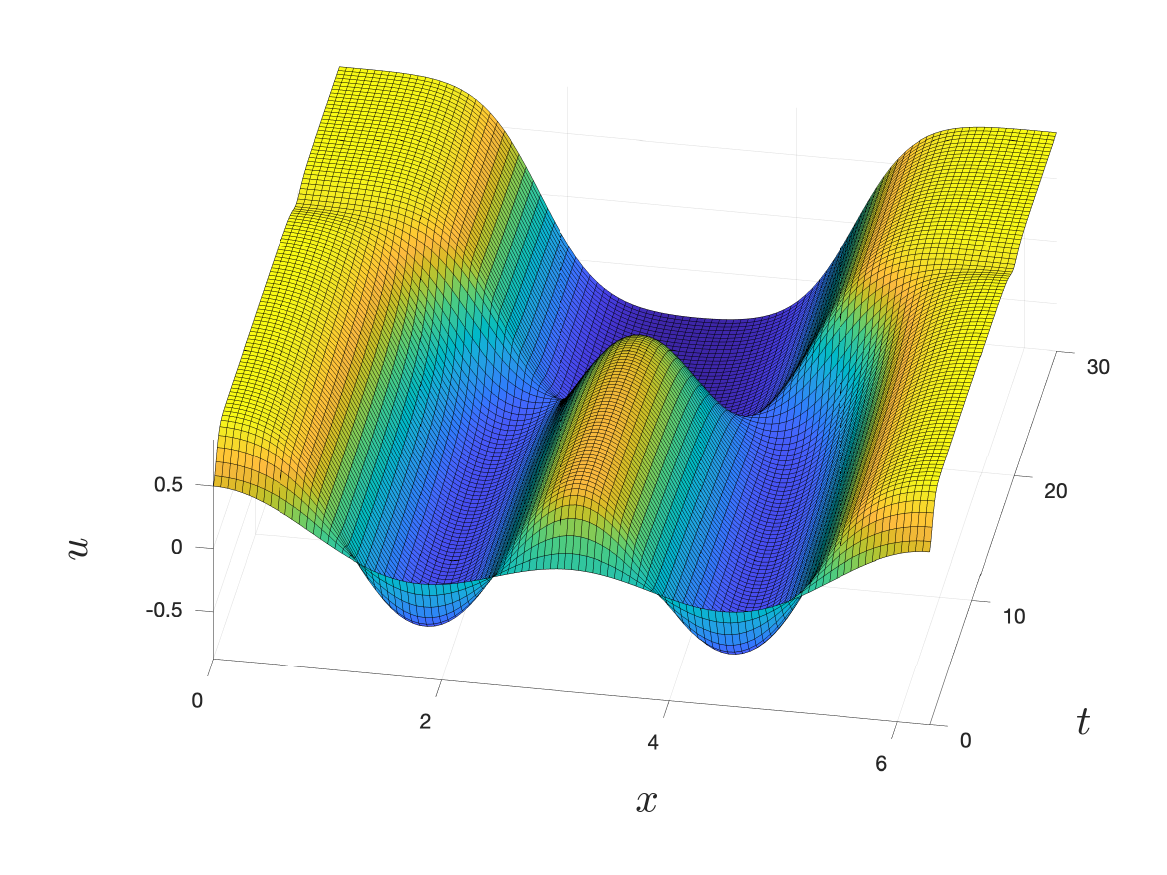}
\caption{The global approximate solution $\bu$ of the Ohta--Kawasaki equation~\eqref{eq:OK} which has been validated in Theorem~\ref{th:OK}, represented on the time interval $[0,30]$.}
\label{fig:OK}
\end{figure}
\begin{theorem}
\label{th:OK}
Consider the Ohta--Kawasaki equation~\eqref{eq:OK}	
with $\gamma = \sqrt{8}$, $\sigma=1/5$, $m=1/10$, $L=2\pi$, and $\uin(x) = m+\frac{2}{10}\cos\left(\frac{2\pi x}{L}\right)+\frac{2}{10}\cos\left(\frac{4\pi x}{L}\right)$. The solution $u=u(t,x)$ satisfies
\begin{align*}
\sup_{t\in [0,\infty)} \sup_{x\in [0,L]} \vert u(t,x)-\bu(t,x) \vert \leq 9\times 10^{-7},
\end{align*}
where the precise description of the approximate solution $\bu=\bu(t,x)$ in terms of Chebyshev$\times$Fourier coefficients can be downloaded
 at~\cite{integratorcode_new}. This approximate solution $\bu$ is represented in Figure~\ref{fig:OK} for $t\in[0,\tau]$ with $\tau=30$, and satisfies $\bu(t,x) = \bu(\tau,x)$ for all $t\geq \tau$ and all $x\in[0,L]$. Moreover, there exists a stationary solution $u^\stat=u^\stat(x)$ of the Otha--Kawasaki equation, such that
\begin{align*}
\sup_{x\in [0,L]} \vert u^\stat(x)-\bu(\tau,x) \vert \leq 9\times 10^{-7}.
\end{align*}
Finally, the solution $u$ of~\eqref{eq:OK} converges to $u^\stat$ as $t\to\infty$, and $u^\stat$ is locally asymptotically stable. 
\end{theorem}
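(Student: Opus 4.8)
The plan is to set up the fixed point operator $T$ from~\eqref{eq:T_general} with $\L$ a piece-wise constant approximation of $DF(\bu(t))$ on a time grid $0=t_0<t_1<\dots<t_N=\tau$, with a final subinterval $[t_N,\infty)$ on which $\L$ is taken to be the (suitably truncated/approximated) linearization $DF(\bu(\tau))=DF(\bu(\tau,\cdot))$ around the numerical equilibrium candidate $\bu(\tau,\cdot)$. The first step is to verify that on this final, infinitely long ``timestep'' the evolution operator $\U(t,s)=e^{(t-s)\L}$ is exponentially stable, i.e. $\|\U(t,s)\| \leq M e^{-\beta(t-s)}$ for some explicit $\beta>0$; this is where hyperbolicity of the would-be equilibrium enters, and it requires rigorously enclosing the spectrum of $DF(\bu(\tau,\cdot))$ (equivalently, controlling the resolvent/leading eigenvalues of a finite block together with the obviously-decaying high-frequency tail) to produce an explicit spectral gap. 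With that exponential bound in hand, the weighted $\ell^1$-in-Fourier operator estimates on $\int_{t_i}^t \U(t,s)(\cdot)\,\d s$ used throughout the paper remain finite as the interval length $\to\infty$ (the $\int_0^\infty e^{-\beta s}\d s = 1/\beta$ replaces the finite-interval integral), so the same contraction argument that proves existence on $[0,\tau]$ extends verbatim to $[0,\infty)$, yielding a true solution $u$ on $[0,\infty)$ with the stated $9\times10^{-7}$ error bound.

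Next I would extract the equilibrium. Because $\bu(t,x)=\bu(\tau,x)$ for $t\geq\tau$, the fixed point $u$ restricted to $[\tau,\infty)$ satisfies, for $t\geq\tau$,
\begin{align*}
u(t) = \U(t,\tau)u(\tau) + \int_\tau^t \U(t,s)\gamma(s,u(s))\,\d s,
\end{align*}
and one checks that the unique fixed point in the ball around the constant-in-time $\bu(\tau,\cdot)$ is itself constant in time (either by a direct uniqueness/autonomy argument — the problem on $[\tau,\infty)$ is autonomous and $\bu(\tau,\cdot)$ is a stationary profile of the splitting — or by observing that the time-translate $u(\cdot+h)$ is also a fixed point in the same ball, hence equals $u$). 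Call this constant profile $u^\stat$; it solves $0 = \L u^\stat + \gamma(u^\stat) = F(u^\stat)$, i.e. it is a genuine stationary solution of~\eqref{eq:OK}, and it lies in the validation ball, giving $\sup_x|u^\stat(x)-\bu(\tau,x)|\leq 9\times10^{-7}$.

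For convergence and stability, I would use that the true solution $u$ on $[\tau,\infty)$ stays in the small ball $B$ around $u^\stat$, and linearize the flow about $u^\stat$: writing $w=u-u^\stat$, we have $\partial_t w = \L w + \bigl(\gamma(u^\stat+w)-\gamma(u^\stat)\bigr)$, and since $\gamma$ was designed so that its derivative at $\bu(\tau,\cdot)$ is small (not exactly zero, but controlled), the perturbation-of-semigroup / Gronwall estimate gives $\|w(t)\| \leq M e^{-(\beta-\varepsilon)(t-\tau)}\|w(\tau)\|$ for all $t\geq\tau$ as long as $w$ stays in $B$ — and the exponential decay keeps it there. Hence $u(t)\to u^\stat$ as $t\to\infty$, with explicit rate $\beta-\varepsilon>0$; the same estimate applied to an arbitrary initial perturbation in an explicit neighborhood of $u^\stat$ shows $u^\stat$ is locally asymptotically stable (indeed exponentially stable with an explicit basin and spectral gap). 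The main obstacle I anticipate is the first step: obtaining a rigorous, sufficiently sharp enclosure of the spectrum of the linearization $DF(\bu(\tau,\cdot))$ — i.e. proving the explicit spectral gap $\beta>0$ — uniformly enough that the infinite-time integrals converge and the contraction constant stays below $1$; everything downstream (extracting $u^\stat$, Gronwall, stability) is then routine given the finite-time machinery already developed in the paper.
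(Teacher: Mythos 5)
Your high-level plan matches the paper: piece-wise constant $\L$, a final infinitely long step on which $\L$ is time-independent with all eigenvalues having negative real part, and the observation that $\int_0^\infty e^{s\Re\lambda_n}\,\d s = -1/\Re\lambda_n$ keeps the weighted-$\ell^1$ operator bounds finite so that the contraction argument for $T$ extends to $[\tau,\infty)$. Up to the point ``yielding a true solution $u$ on $[0,\infty)$'' you are essentially following Sections~\ref{sec:infinity_onedomain}--\ref{sec:infinity_bounds} and~\ref{sec:infinity_DD}. However, the step where you extract the stationary solution is incorrect. You assert that the unique fixed point of $T$ on $[\tau,\infty)$ is itself constant in time, by an autonomy or time-translation argument. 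This fails: the operator $T$ from~\eqref{eq:T_L_infinity}, $T(u)(t) = e^{(t-\tau)\L}\uin + \int_\tau^t e^{(t-s)\L}\gamma(u(s))\,\d s$, pins the initial value $u(\tau)$ to $\uin$, which is the endpoint handed over from the preceding finite subdomain and is only $O(r)$-close to, not equal to, the equilibrium. Its fixed point $u^\star$ is therefore a genuine nonconstant relaxation trajectory. The translate $u^\star(\cdot+h)$ is a fixed point of the operator with a \emph{different} initial value $u^\star(\tau+h)$, so uniqueness of fixed points of $T$ in a ball does not force constancy, and the ``$\bu(\tau,\cdot)$ is a stationary profile of the splitting'' fallback also fails since $\bu(\tau,\cdot)$ is only approximately stationary. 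The paper circumvents this by introducing a second operator, $T^{\stat}(u)=-\L^{-1}\gamma(u)$ acting on $\ell^1_\nu$ rather than $\X_\nu$, whose fixed points are exactly stationary solutions of~\eqref{eq:PDE}. Via the identity $T(u)(t)=e^{(t-\tau)\L}(\uin-T^{\stat}(u))+T^{\stat}(u)$ for constant $u$, and the fact that every $\X_\nu$-norm estimate was obtained as a supremum in $t$ of an $\ell^1_\nu$-estimate, the same $Y$, $Z$, $W$ bounds control $T^{\stat}$; Theorem~\ref{thm:steadystate} then yields $u^{\stat}\in\B_{\ell^1_\nu}(\bu,r)$. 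Convergence $u^\star(t)\to u^{\stat}$ is obtained separately (Lemma~\ref{lem:convergencetosteadystate}) from boundedness of $u^\star$ and uniqueness of the equilibrium in the validation ball, not from constancy of $u^\star$.

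A second, lesser deviation concerns the spectral hypothesis. You propose to rigorously enclose the spectrum of $DF(\bu(\tau,\cdot))$ up front in order to obtain the exponential decay rate $\beta$. The paper instead avoids any a priori spectral enclosure: the operator $\L$ on $[\tau,\infty)$ is the already (near-)diagonalized surrogate $Q\Lambda Q^{-1}$, whose eigenvalues are simply the explicitly known entries of $\Lambda$ (numerically computed for $|n|\le \NL$, given by $\lambda_n=-n^{2J}+\sum_j(in)^j\bv_0^{(j)}$ for $|n|>\NL$), so checking $\Re\lambda_n<0$ is trivial. The spectral gap of the true linearization $DF(u^{\stat})$ is then derived a posteriori, as a corollary of the successful contraction, by a homotopy between $\L$ and $DF(u^{\stat})$ that reuses the $Z$ and $W$ bounds. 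Similarly, local asymptotic stability with an explicit basin is obtained by re-running the contraction argument with a perturbed initial condition (Theorem~\ref{thm:stability}), rather than by a Gronwall/perturbed-semigroup estimate; your Gronwall route could be made to work, but it is not what is done here and would require the spectral gap as an input rather than obtaining it as a byproduct.
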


\medskip

The remainder of the paper is organized as follows. In Section~\ref{sec:preliminaries}, we introduce the relevant function spaces, fix some notations, and recall a fixed point theorem that will prove convenient to rigorously validate solutions of~\eqref{eq:PDE} via the operator $T$ from~\eqref{eq:T_general}. In Section~\ref{sec:onedomain}, we then first deal with the case where the operator $\L$ approximating $DF(\bu)$ is taken constant over the whole time-interval. We provide there an explicit construction of $\L$, together with computable bounds satisfying the assumptions of the fixed point theorem. The general case, where $\L$ is only taken piece-wise constant, is then presented in Section~\ref{sec:domaindecomposition}, with adaptive stepsize and other parameter optimization discussed in Section~\ref{sec:parameters}. The extension to $\tau=+\infty$, and how this implies the existence of a locally stable steady state, is presented in Section~\ref{sec:infinity}. Various examples, including some comparison with~\cite{BerBreShe24}, are then given in Section~\ref{sec:examples}.

All the computations presented in this paper have been implemented in Matlab,
using the Intlab package~\cite{Rum99} for interval arithmetic. The computer-assisted
parts of the proofs can be reproduced using the code available at~\cite{integratorcode_new}.
\section{Preliminaries}
\label{sec:preliminaries}
In this section we introduce notation and provide the necessary background for the tools
used in this paper. 

\subsection{Sequence space of Fourier coefficients}
\label{sec:preliminaries_Fourier}
Our phase space will be described in terms of geometrically decaying Fourier sequences
\begin{align*}
	\ell^{1}_{\nu} \bydef  \biggl\{ a \in \CC^{\ZZ} : 
				\sum_{n\in\ZZ} \left \vert a_{n} \right \vert \nu^{\vert n\vert} < \infty \biggr\} ,
\end{align*}
endowed with the norm 
$\left \Vert a \right \Vert_{\ell^1_\nu} = 
\sum_{n\in\ZZ}  \left \vert a_{n} \right \vert \nu^{\vert n\vert}$,
where $\nu\geq 1$ is some decay rate to be chosen later. 
\begin{remark}[Notation]
\label{rem:identification}
In order to simplify the notation, in the rest of this paper we will use the same symbol to denote a function and its (discrete) Fourier transform, that is we write
\begin{equation*}
a(x) = \sum_{n\in\ZZ} a_{n} e^{inx}.
\end{equation*}
It should be clear from the context whether $a$ denotes the function $x\mapsto a(x)$ or the sequence of Fourier coefficients $a=\left(a_n\right)_{n\in\ZZ}$.
\end{remark}
Recall that the Fourier coefficients of the product of two Fourier expansions is given by
the discrete convolution: when $a$ and $b$ are $2\pi$-periodic functions given by
$a(x) = \sum_{n\in\ZZ} a_{n} e^{inx}$
and \
$b(x) = \sum_{n\in\ZZ} b_{n} e^{inx}$,
then
\begin{align*}
	\left( ab \right) (x) = 
	 \sum_{n\in\ZZ} \left( a \ast b\right)_{n} e^{inx},  
\qquad\text{where}\qquad
	\left(  a \ast b \right)_{n} \bydef  \sum_{m \in \ZZ}  a_m b_{n-m},
	\quad n \in \ZZ,
\end{align*}
and these formal computations are justified as soon as $a$ and $b$ are smooth enough, which is the case if their Fourier coefficients belong to $\ell^1_\nu$ for some $\nu\geq 1$. We also recall that the discrete convolution gives $\ell^{1}_{\nu}$ a Banach algebra structure.
\begin{lemma}
	\label{lem:BanachAlgebra}
For all $a,b\in\ell^1_\nu$, $a\ast b \in\ell^1_\nu$ and
$\left \Vert a\ast b \right \Vert_{\ell^1_\nu} \leq \left \Vert a \right \Vert_{\ell^1_\nu} \left \Vert b \right \Vert_{\ell^1_\nu}$.
\end{lemma}

\begin{remark}[Notation, continued]
\label{rem:absg}
Following Remark~\ref{rem:identification}, given a polynomial function $g:\RR\to\RR$ and $a\in\ell^1_\nu$, we still use $g(a)$ to denote the sequence of Fourier coefficients of the function $g(a)$. For instance, if $g(x) = x^3 - 2x$ and $a\in\ell^1_\nu$, then $g(a) \bydef a\ast a\ast a - 2a$. 

Moreover, for a polynomial function $g:\RR\to\RR$ written
$g(x) = \sum_{k=0}^K g_k x^k$, 
we denote by $\vert g\vert$ the polynomial given by
$\vert g\vert (x) \bydef \sum_{k=0}^K \vert g_k\vert x^k$.
\end{remark}

The Banach algebra property of Lemma~\ref{lem:BanachAlgebra} readily yields the following statement, which will prove useful in the sequel.
\begin{lemma}
\label{lem:abs_pol}
Let $a,b\in\ell^1_\nu$ and $g:\RR\to\RR$ a polynomial function. Then
\begin{align*}
\left\Vert g(a+b)\right\Vert_{\ell^1_\nu} \leq  \vert g\vert \left(\left\Vert a\right\Vert_{\ell^1_\nu} + \left\Vert b\right\Vert_{\ell^1_\nu}\right).
\end{align*}
\end{lemma}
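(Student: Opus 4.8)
The plan is to reduce everything to the Banach algebra property of Lemma~\ref{lem:BanachAlgebra} together with the triangle inequality, applied monomial by monomial. Write $g(x) = \sum_{k=0}^K g_k x^k$, so that, following the convention of Remark~\ref{rem:absg}, the sequence $g(a+b)$ is literally $\sum_{k=0}^K g_k (a+b)^{\ast k}$, where $(a+b)^{\ast k}$ denotes the $k$-fold discrete convolution of $a+b$ with itself (and $(a+b)^{\ast 0} = \1$, the unit of the algebra, which has $\ell^1_\nu$-norm $1$). By the triangle inequality in $\ell^1_\nu$,
\begin{align*}
\left\Vert g(a+b)\right\Vert_{\ell^1_\nu} \leq \sum_{k=0}^K \vert g_k\vert \left\Vert (a+b)^{\ast k}\right\Vert_{\ell^1_\nu}.
\end{align*}

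Next I would bound each term. By the triangle inequality again, $\left\Vert a+b\right\Vert_{\ell^1_\nu} \leq \left\Vert a\right\Vert_{\ell^1_\nu} + \left\Vert b\right\Vert_{\ell^1_\nu}$, and then iterating the submultiplicativity of Lemma~\ref{lem:BanachAlgebra} gives $\left\Vert (a+b)^{\ast k}\right\Vert_{\ell^1_\nu} \leq \left\Vert a+b\right\Vert_{\ell^1_\nu}^k \leq \bigl(\left\Vert a\right\Vert_{\ell^1_\nu} + \left\Vert b\right\Vert_{\ell^1_\nu}\bigr)^k$ for every $k \geq 0$ (the case $k=0$ being trivially $1 \leq 1$). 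Substituting into the sum yields
\begin{align*}
\left\Vert g(a+b)\right\Vert_{\ell^1_\nu} \leq \sum_{k=0}^K \vert g_k\vert \bigl(\left\Vert a\right\Vert_{\ell^1_\nu} + \left\Vert b\right\Vert_{\ell^1_\nu}\bigr)^k = \vert g\vert\bigl(\left\Vert a\right\Vert_{\ell^1_\nu} + \left\Vert b\right\Vert_{\ell^1_\nu}\bigr),
\end{align*}
by the very definition of $\vert g\vert$ in Remark~\ref{rem:absg}, which is exactly the claimed inequality.

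There is essentially no obstacle here; the only points requiring a word of care are purely notational. First, one must be explicit that $g(a+b)$ is interpreted in the convolution sense of Remark~\ref{rem:absg} rather than as the naive pointwise sequence $(g(a_n+b_n))_n$ — the whole statement is about the former. Second, one should note that $\ell^1_\nu$ contains the unit $\1$ (with $\left\Vert\1\right\Vert_{\ell^1_\nu}=1$) so the constant term $k=0$ is handled uniformly with the rest, and that membership $g(a+b)\in\ell^1_\nu$ is automatic since $\ell^1_\nu$ is a Banach algebra and hence closed under the finitely many sums and convolutions defining $g(a+b)$. With these remarks in place the proof is a two-line computation, so I would present it as such rather than belabouring it.
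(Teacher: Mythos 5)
Your proof is correct and is exactly the one-line argument the paper has in mind (the paper offers no written proof, only the remark that Lemma~\ref{lem:BanachAlgebra} ``readily yields'' the statement): expand $g$ into monomials, apply the triangle inequality, and iterate submultiplicativity. The only notational care you flag — the convolution reading of $g(a+b)$ and the unit having norm $1$ — is exactly right, so there is nothing to add or change.
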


We now introduce projections allowing us to work with finitely many Fourier modes.
\begin{definition}[Truncation of phase space]
	Let $N \in \NN$ be a truncation parameter. 
	The projection $\Pi^{\leq N} : \ell^1_\nu \rightarrow \ell^1_\nu$ is defined by
	\begin{align*}
		\left( \Pi^{\leq N} ( a ) \right)_{n} \bydef  
		\begin{cases}
			a_{n}, &  0 \leq \vert n\vert \leq N, \\
			0, & \vert n\vert > N. 
		\end{cases}
	\end{align*} 
	Furthermore, we set $\Pi^{>N} \bydef  I - \Pi^{\leq N}$, where $I$ is the identity on $\ell^1_\nu$. In the sequel, we frequently identify elements of $\Pi^{\leq N}\ell^1_\nu$ with vectors in $\CC^{2N+1}$, and, given a linear operator $L$ on $\ell^1_\nu$, we also identify $\Pi^{\leq N} L\, \Pi^{\leq N}$ with a $(2N+1)\times(2N+1)$ matrix. 
\end{definition}
We end this section with a final notation.
\begin{definition}[Spatial derivative]
We denote by $\D$ the Fourier transform of $\dfrac{\partial}{\partial x}$, i.e.,
\begin{equation*}
\left(\D a\right)_n \bydef  in a_n, \qquad\text{for all }a\in\ell^1_\nu.
\end{equation*}
\end{definition}

\subsection{Function space}
\label{sec:preliminaries_Xnu}

We are first going to solve~\eqref{eq:PDE} in a space which can be identified with a subspace of $C([0,\tau],\ell^1_\nu)$, the space of continuous functions from $[0,\tau]$ into $\ell^{1}_{\nu}$. 

\begin{definition}
\label{def:Xnu}
Let $\nu\geq 1$. We set
\begin{align*}
\X_\nu \bydef \left\{ u = \left(u_n\right)_{n\in\ZZ}\in \left(C([0,\tau],\CC)\right)^\ZZ,\ \left\Vert u\right\Vert_{\X_\nu} \bydef \sum_{n\in\ZZ} \left\Vert u_n\right\Vert_{C^0} \nu^{\vert n\vert} < \infty \right\},
\end{align*}
where, for any continuous function $v:[0,\tau] \to \CC$, we have
$\left\Vert v\right\Vert_{C^0} \bydef \max_{t\in [0,\tau]} \left\vert v(t)\right\vert$.
In particular, for any $u\in\X_\nu$ and $t\in[0,\tau]$, $u(t)$ can be interpreted as an element of $\ell^1_\nu$. Any linear operator on $\ell^1_\nu$ can be extended to a \emph{constant} (in time) linear operator on $\X_\nu$.
\end{definition}
\noindent Note that $\left(\X_\nu,\left\Vert \cdot\right\Vert_{\X_\nu}\right)$ is a Banach space. Also, we already warn the reader that, while this exact space will indeed be used in Section~\ref{sec:onedomain} in the specific case where $\L$ is taken constant, a slightly more involved space will in fact be introduced in Section~\ref{sec:domaindecomposition} when $\L$ is only piece-wise constant.

\begin{remark}[Notation]
Every time an operation defined on scalars or scalar functions is applied to an element of $\X_\nu$, this operation must be understood component-wise. For instance, for any $u$ in $\X_\nu$ and $t\in[0,\tau]$,
\begin{align*}
\vert u(t)\vert \bydef \left(\left\vert u_n(t)\right\vert\right)_{n\in\ZZ}\qquad \text{and}\qquad \left\Vert u\right\Vert_{C^0} \bydef \left(\left\Vert u_n\right\Vert_{C^0}\right)_{n\in\ZZ}.
\end{align*} 
In particular, note that $\left\Vert u\right\Vert_{\X_\nu} = \left\Vert\left\Vert u\right\Vert_{C^0}\right\Vert_{\ell^1_\nu}$.

The projections $\Pi^{\leq N}$ and $\Pi^{> N}$, as well as the spatial derivative $\D$, also extend in a natural way to $\X_\nu$, by being applied at each time. That is, for any $u\in\X_\nu$ and $t\in[0,\tau]$, $\left(\D u\right)(t)\bydef \D\left(u(t)\right)$.
\end{remark}

Following Remark~\ref{rem:identification}, every $u$ in $\X_\nu$ can be identified with a function $u:[0,\tau]\times\TT\to \CC$, written
\begin{align*}
u(t,x) = \sum_{n\in\ZZ} u_n(t) e^{inx}.
\end{align*}
Such a function $u$ is continuous, and we have
\begin{align*}
\max_{\substack{ t\in[0,\tau] , x \in \TT }} \left\vert u(t,x)\right\vert	\leq \left\Vert u\right\Vert_{\X_\nu}.
\end{align*}
Moreover, if $\nu>1$, the function $u$ is analytic in the second variable $x$.

\subsection{Approximation in time}
\label{sec:interp}

In time, we use approximate solutions that are piece-wise polynomials (polynomials on each subdomain in time), written in the Chebyshev basis. In particular, we often bound $C^0$-norms in time from above using that 
\begin{align}
\label{eq:C0time}
\max_{t\in[-1,1]} \left\vert \sum_{k=0}^K p_k T_k(t) \right\vert \leq \sum_{k=0}^K \vert p_k\vert,
\end{align}
where $T_k$ are Chebyshev polynomials of the first kind.

When deriving estimates to prove that the fixed point operator $T$ of the form~\eqref{eq:T_general} is a contraction, we will also need explicit and sharp upper-bounds for quantities like 
$\left\Vert \varphi \right\Vert_{C^0}$, where $\varphi$ is a potentially complicated but known smooth function, that we can rigorously evaluate at any given point. In such cases, we estimate, for some $\tilde{K}\in\NN$ large enough,
\begin{align}
\label{eq:C0interp}
\left\Vert \varphi \right\Vert_{C^0} \leq  \left\Vert P_{\tilde{K}}\varphi \right\Vert_{C^0} + \left\Vert \varphi - P_{\tilde{K}}\varphi \right\Vert_{C^0},
\end{align}
where $P_{\tilde{K}}\varphi$ is the Lagrange interpolation polynomial of $\varphi$ at $\tilde{K}+1$ Chebyshev nodes. This interpolation polynomial itself can be computed explicitly from evaluations of $\varphi$, which then yields a computable bound for $\left\Vert P_{\tilde{K}}\varphi \right\Vert_{C^0}$ (for instance using~\eqref{eq:C0time}). We then need to obtain explicit bounds for the interpolation error $\left\Vert \varphi - P_{\tilde{K}}\varphi \right\Vert_{C^0}$ (see, e.g.,~\cite{Tre13}). Provided $\tilde{K}$ is taken large enough, $\left\Vert \varphi - P_{\tilde{K}}\varphi \right\Vert_{C^0}$ should be small compared to $\left\Vert P_{\tilde{K}}\varphi \right\Vert_{C^0}$, and this procedure then yields a sharp upper-bound for $\left\Vert \varphi \right\Vert_{C^0}$. For details on how to get the required interpolation error estimates in our context, see~\cite[Section 2.2 and Appendices B and C]{BerBreShe24}.

\subsection{A fixed point Theorem}
\label{sec:NewtonKantorovich}

Let $M\in\NN_{\geq 1}$, $(X^m,\|\cdot\|_{X^m})_{m=1}^M$ be Banach spaces, $X=\Pi_{m=1}^M X^m$ the product space, and $\pi^m:X\to X^m$ the projections onto the components. Let $\rstar = (\rstar^m)_{m=1}^M \in \RR_{>0}^M$ and $\bar{x} \in X$. For any $r\in \RR_{>0}^M$, we define $\bbox(\bar{x},r) = \{ x \in X :  \left\Vert\pi^{m}(x-\bar{x}) \right\Vert_{X^m} \leq r^m \text{ for } 1\leq m\leq M\}$. We consider in this section a map $T \in C^1(\bbox(\bar{x},r),X)$. For $r,\rstar \in \RR_{>0}^M$ we say that $r \leq \rstar$ if $r^m \leq \rstar^m$ for all $1\leq m\leq M$. Finally, we denote partial Fr\'echet derivatives by $D_i$. 

The following statement, based on the Banach fixed point Theorem, provides explicit conditions under which $T$ has a unique fixed point in $\bbox(\bar{x},r)$ (for some explicit $r$). Many similar versions of this theorem have been used in the last decades for computer-assisted proofs. This specific instance is the one introduced in~\cite[Theorem 2.12]{BerBreShe24}. Its main feature is that the norm on the product space $X$ is not chosen a priori, and that we do no try to choose the set $\bbox(\bar{x},r)$ as a ball for this norm, which provides some extra flexibility when trying to prove that $T$ is a contraction, see~\cite{BerBreShe24} for a more in-depth discussion.

\begin{theorem}
\label{thm:NewtonKantorovich}
Assume that $Y^m \geq 0$, $Z^m_i \geq 0$, $W^m_{ij} \geq 0$ for $1 \leq  i,j,m \leq M$ satisfy
\begin{alignat}{2}
	\left\Vert\pi^m (T(\bar{x})-\bar{x}) \right\Vert_{X^m} &\leq Y^m, \label{e:def_Y}\\
	\left\Vert\pi^m D_i T(\bar{x}) \right\Vert_{B(X^i,X^m)} &\leq Z^m_i, \label{e:def_Z}\\
	\left\Vert\pi^m (D_i T(x)- D_i T(\bar{x})) \right\Vert_{B(X^i,X^m)} &\leq \sum_{j=1}^M W^m_{ij} \left\Vert \pi^j(x-\bar{x}) \right\Vert_{X^j}
	&\quad&\text{for all } x\in \bbox(\bar{x},\rstar). \label{e:def_W}
\end{alignat}
If $r,\eta \in \RR_{>0}^M$ with $r \leq \rstar$ satisfy
\begin{alignat}{1}
	Y^m + \sum_{i=1}^M Z^m_{i} r^i  
	+ \frac{1}{2}\sum_{i,j=1}^M W^m_{ij} r^i r^j  & \leq r^m 
	\label{e:inequalities1} \\
	 \sum_{i=1}^M Z^m_{i} \eta^i
	 + \sum_{i,j=1}^M W^m_{ij} \eta^i r^j  & < \eta^m,
	 \label{e:inequalities2} 
\end{alignat}
for $1\leq m\leq M$,
then $T$ has a unique fixed point in $\bbox(\bar{x},r)$.
\end{theorem}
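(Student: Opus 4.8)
The plan is to reduce the statement to the classical Banach fixed point theorem by constructing, from the hypotheses, a closed set on which $T$ is a well-defined contraction. First I would fix $r$ as in the statement and consider the set $\bbox(\bar x,r)$, which is a closed subset of the Banach space $X$ (it is closed because each $X^m$ is complete and the condition defining it is a conjunction of closed conditions on the norms of the components). Since $r\le\rstar$, the hypotheses \eqref{e:def_Y}, \eqref{e:def_Z}, \eqref{e:def_W} are all available on $\bbox(\bar x,r)$.

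The core estimate is a quantitative Taylor/mean value bound: for $x\in\bbox(\bar x,r)$, write
\begin{align*}
\pi^m(T(x)-\bar x) = \pi^m(T(\bar x)-\bar x) + \pi^m D T(\bar x)(x-\bar x) + \pi^m\!\int_0^1 \big(DT(\bar x + s(x-\bar x)) - DT(\bar x)\big)(x-\bar x)\,\d s,
\end{align*}
using that the segment $[\bar x, x]$ lies in the convex set $\bbox(\bar x,r)\subseteq\bbox(\bar x,\rstar)$. Taking $\|\cdot\|_{X^m}$, splitting $DT(\bar x)$ into partial derivatives $D_iT(\bar x)$, and inserting \eqref{e:def_Y}, \eqref{e:def_Z} for the first two terms and \eqref{e:def_W} for the integral term (the factor $\frac12$ coming from $\int_0^1 s\,\d s$), I get
\begin{align*}
\|\pi^m(T(x)-\bar x)\|_{X^m} \le Y^m + \sum_{i=1}^M Z^m_i r^i + \frac12\sum_{i,j=1}^M W^m_{ij} r^i r^j \le r^m,
\end{align*}
where the last step is exactly \eqref{e:inequalities1}. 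Hence $T(\bbox(\bar x,r))\subseteq\bbox(\bar x,r)$, so $T$ maps the closed set into itself.

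Next I would show $T$ is a contraction on $\bbox(\bar x,r)$ with respect to a suitably weighted norm. For $x,y\in\bbox(\bar x,r)$, the same mean value argument applied to $T(x)-T(y)$ along the segment $[y,x]$ (again inside $\bbox(\bar x,\rstar)$) gives, componentwise,
\begin{align*}
\|\pi^m(T(x)-T(y))\|_{X^m} \le \sum_{i=1}^M\Big(Z^m_i + \sum_{j=1}^M W^m_{ij} r^j\Big)\|\pi^i(x-y)\|_{X^i},
\end{align*}
since for any $z$ on that segment $z-\bar x$ has components bounded by $r^j$. Now introduce the weighted norm $\vertiii{x} \bydef \max_{1\le m\le M}\|\pi^m x\|_{X^m}/\eta^m$ on $X$, which is equivalent to the product norm because all $\eta^m>0$; $\bbox(\bar x,r)$ is still complete in this norm. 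Dividing the displayed inequality by $\eta^m$ and bounding $\|\pi^i(x-y)\|_{X^i}\le \eta^i\vertiii{x-y}$ yields
\begin{align*}
\frac{\|\pi^m(T(x)-T(y))\|_{X^m}}{\eta^m} \le \frac{1}{\eta^m}\sum_{i=1}^M\Big(Z^m_i\eta^i + \sum_{j=1}^M W^m_{ij}\eta^i r^j\Big)\,\vertiii{x-y} =: \kappa_m\,\vertiii{x-y},
\end{align*}
and by \eqref{e:inequalities2} we have $\kappa_m<1$ for each $m$; taking the max over $m$ gives $\vertiii{T(x)-T(y)}\le\kappa\vertiii{x-y}$ with $\kappa=\max_m\kappa_m<1$. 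Applying the Banach fixed point theorem to $T$ on the complete metric space $(\bbox(\bar x,r),\vertiii{\cdot})$ gives the unique fixed point in $\bbox(\bar x,r)$, as claimed.

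I do not expect a serious obstacle here; the argument is a standard Newton–Kantorovich-style contraction argument. The only points requiring a little care are: verifying that the segments joining points of $\bbox(\bar x,r)$ stay inside $\bbox(\bar x,\rstar)$ so that \eqref{e:def_W} is applicable (immediate from $r\le\rstar$ and convexity of boxes), correctly tracking the $\tfrac12$ in the self-mapping estimate versus no $\tfrac12$ in the contraction estimate (the former integrates $DT(\bar x+s(x-\bar x))-DT(\bar x)$ whose norm grows like $s\,r$, the latter integrates $DT(z)$ whose relevant increment is controlled uniformly by $r$), and checking that the weighted max-norm $\vertiii{\cdot}$ is a genuine norm equivalent to the product norm so that completeness is preserved. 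The role of the extra vector $\eta$ and the separate inequality \eqref{e:inequalities2} is precisely to certify the contraction factor in this weighted norm without committing to a particular norm on $X$ in advance, which is the flexibility emphasized after the theorem statement.
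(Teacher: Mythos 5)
Your proof is correct, and it is the standard Newton--Kantorovich-style argument that the paper (via its citation of [BerBreShe24, Theorem 2.12]) relies on: an integral Taylor estimate for self-mapping of $\bbox(\bar x,r)$, a mean value estimate for the Lipschitz constant of $T$ on $\bbox(\bar x,r)$, introduction of the weighted sup-norm $\vertiii{\cdot}$ built from $\eta$ so that \eqref{e:inequalities2} certifies a contraction constant strictly less than one, and an application of the Banach fixed point theorem on the complete metric space $(\bbox(\bar x,r),\vertiii{\cdot})$. You handle the two subtleties cleanly: $\bbox(\bar x,r)$ need not be a ball for $\vertiii{\cdot}$ (Banach's theorem only needs a complete metric space and a contractive self-map, not a ball), and the segments used in the mean value steps stay inside $\bbox(\bar x,\rstar)$ by convexity and $r\le\rstar$, so \eqref{e:def_W} is applicable throughout.
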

\begin{remark}
In the case $M=1$, for which we can drop the exponents $m$ and the indices $i$ and~$j$, we recover a statement which is often used in the computer-assisted proof literature. Indeed, $Y$, $Z$ and $W$ are then simply real numbers which have to satisfy
 \begin{alignat}{2}
	\left\Vert T(\bar{x})-\bar{x} \right\Vert_{X} &\leq Y, \label{e:def_Y_M1}\\
	\left\Vert D T(\bar{x}) \right\Vert_{B(X,X)} &\leq Z, \label{e:def_Z_M1}\\
	\left\Vert D T(x)- D T(\bar{x}) \right\Vert_{B(X,X)} &\leq  W \left\Vert x-\bar{x} \right\Vert_{X}
	&\quad&\text{for all } x\in \B_X(\bar{x},\rstar), \label{e:def_W_M1}
\end{alignat}
where $\B_X(\bar{x},\rstar)$ is the ball of center $\bar{x}$ and radius $\rstar$ in $X$. Moreover, it is then sufficient to have the existence of $r\in(0,\rstar]$ satisfying 
\begin{alignat}{1}
	Y + Z r	+ \frac{1}{2}W r^2  & \leq r \label{e:inequalities1_M1}
	\\
	 Z + W r  & < 1,\label{e:inequalities2_M1}
\end{alignat}
in order to conclude that $T$ has a unique fixed point in $\B_X(\bar{x},r)$. This simplified version will be used in Sections~\ref{sec:onedomain} and~\ref{sec:infinity}.
\end{remark}

\section{Taking $\L$ constant over the entire integration time}
\label{sec:onedomain}

In this section, we define explicitly a suitable time-independent approximation $\L$ of $DF(\bu(t))$, consider the corresponding fixed point operator $T$ introduced in~\eqref{eq:T_general}, and then derive computable bounds $Y$, $Z$ and $W$ satisfying assumptions~\eqref{e:def_Y_M1}-\eqref{e:def_W_M1}.

We hasten to emphasize that this section is mostly here for pedagogical purposes. Indeed, as soon as the integration time $\tau$ is not very small, in practice we always take an operator $\L(t)$ which is only piece-wise constant and not fully time-independent. However, first considering $\L$ time-independent allows us to present the derivation of the $Y$, $Z$ and $W$ bounds with one less layer of notation, and all the computations made in this section will in fact be useful in Section~\ref{sec:domaindecomposition}, when we finally take $\L(t)$ piece-wise constant. Also, the considerations made in this section on a single time-domain will also prove useful again in Section~\ref{sec:infinity}, when we consider an infinitely long step near a stable equilibrium.

In this entire section, $\bu$ denotes an approximate solution to~\eqref{eq:PDE}, which is assumed to belong to $\Pi^{\leq \Nu} \X_\nu$ for some $\Nu\in\NN$. In practice, we use polynomials in time to construct our approximate solution, i.e., each Fourier coefficient $\bu_n\in C^0{([0,\tau],\CC)}$ is a polynomial.

\subsection{Construction of $\L$}
\label{sec:L_onedomain}

As in Section~\ref{sec:introduction}, we denote by $F$ the right-hand-side of~\eqref{eq:PDE}, written in Fourier space. That is, with the notation introduced in Section~\ref{sec:preliminaries_Fourier} and Section~\ref{sec:preliminaries_Xnu}, we consider the map $F$ defined on $\X_\nu$ by
\begin{align*}
F(u) = (-1)^{J+1} \D^{2J}u + \displaystyle\sum_{j=0}^{2J-1} \D^{j}g^{(j)}(u).
\end{align*}
Remember that we want $\L$ to be an approximation of $DF(\bu)$, where, for $h\in\X_\nu$,
\begin{align*}
DF(\bu)h = (-1)^{J+1} \D^{2J}h + \displaystyle\sum_{j=0}^{2J-1} \D^{j}\left[\left( \left(g^{(j)}\right)' (\bu)\right)\ast h\right].
\end{align*}
Since we first want $\L$ to be time independent, we compute finite vectors $\bv^{(j)} \in\Pi^{\leq \Nu}\ell^1_\nu$, $j=1,\ldots,2J-1$, such that
\begin{align*}
\bv^{(j)} \approx \frac{1}{\tau}\int_{0}^{\tau} \Pi^{\leq \Nu}\left[\left(g^{(j)}\right)'(\bu(t))\right] \d t.
\end{align*}
\begin{remark}
In practice, we numerically compute $\frac{1}{\tau}\int_{0}^{\tau} \Pi^{\leq \Nu}\left[\left(g^{(j)}\right)'(\bu(t))\right] \d t$ and take for $\bv^{(j)}$ the output of this computation, but all the estimates to come would be valid for any choice of $\bv^{(j)} \in\Pi^{\leq \Nu}\ell^1_\nu$. In particular, the $\approx$ sign above is only here to emphasize that this computation giving the $\bv^{(j)}$'s does not have to be done rigorously. 
\end{remark}

Using these $\bv^{(j)}$, we have a natural time-independent approximation of $DF(\bu)$, given by
\begin{align*}
h \mapsto (-1)^{J+1} \D^{2J}h + \displaystyle\sum_{j=0}^{2J-1} \D^{j}\left(\bv^{(j)}\ast h\right).
\end{align*}
We further simplify this linear operator, by only keeping diagonal terms in this approximation for large enough modes.
In order to make this precise, let us fix a threshold $\NL\in\NN$, and consider
\begin{align}
\label{eq:deflambda}
\lambda_n \bydef -n^{2J} + \sum_{j=0}^{2J-1} (in)^j \bv^{(j)}_0,\qquad \text{for }\vert n\vert > \NL,
\end{align}
which are going to be the diagonal terms. We then define the (unbounded) operator $\tL$ acting on $\X_\nu$ by
\begin{align*}
\begin{cases}
\tL \,\Pi^{\leq \NL}u = (-1)^{J+1} \D^{2J} \Pi^{\leq \NL}u + \ds \sum_{j=0}^{2J-1} \D^{j} \Pi^{\leq \NL}\left( \bv^{(j)} \ast \Pi^{\leq \NL}u \right) \\
\left(\tL \,\Pi^{> \NL}u\right)_{n} = 
\begin{cases}
	0 & \quad \text{for } \vert n\vert \leq \NL\\
	\lambda_n u_n &\quad \text{for } \vert n\vert > \NL.
\end{cases}
\end{cases}
\end{align*}
\begin{remark}
Note that $\NL$ need not be equal to $\Nu$ (which dictates the number of Fourier modes used for the approximate solution $\bu$), and in practice we often take $\NL$ significantly smaller than~$\Nu$.
\end{remark}
\begin{remark}
An equivalent definition of $\tL$, which will prove convenient later on, is
\begin{align}
\label{eq:deftL}
\tL u = (-1)^{J+1} \D^{2J}u + \ds \sum_{j=0}^{2J-1} \D^{j} \Pi^{\leq \NL}\left( \bv^{(j)} \ast \Pi^{\leq \NL}u \right) + \sum_{j=0}^{2J-1} \bv^{(j)}_0 \D^{j} \Pi^{> \NL}u .
\end{align}
\end{remark}
Notice that
\begin{align*}
\tL = \Pi^{\leq \NL} \tL \,\Pi^{\leq \NL} + \Pi^{> \NL} \tL \,\Pi^{> \NL},
\end{align*}
where $\Pi^{> \NL} \tL \,\Pi^{> \NL}$ is diagonal, therefore computing $e^{t\tL}$ only requires computing the exponential of a finite matrix. In principle, that could be done rigorously by rigorously diagonalizing $\Pi^{\leq \NL} \tL \,\Pi^{\leq \NL}$, but we can also get away with only doing that approximately. That is, we consider a diagonal matrix $\Lambda_{\NL}= \text{diag}\left(\lambda_{-\NL},\ldots,\lambda_{\NL}\right)$ and an invertible matrix $Q_{\NL}$ such that
\begin{equation*}
\Pi^{\leq \NL} \tilde{\L} \, \Pi^{\leq \NL} \approx Q_{\NL} \Lambda_{\NL} Q_{\NL}^{-1},
\end{equation*}
and define
\begin{equation*}
\L_{\NL} \bydef Q_{\NL} \Lambda_{\NL} Q_{\NL}^{-1},
\end{equation*}
which represents a linear operator on $\Pi^{\leq \NL}\ell^1_\nu$, and which we naturally interpret as a linear operator on the full space $\ell^1_\nu$, and as a constant linear operator on $\X_\nu$.  We finally define 
\begin{align*}
\L \bydef \L_{\NL} + \Pi^{> \NL} \tL \,\Pi^{> \NL}.
\end{align*}
In other words, for all $u\in\X_\nu$,
\begin{equation*}
\left\{
\begin{aligned}
& \L  \,\Pi^{\leq \NL}u =  \L_{\NL}\Pi^{\leq \NL} u \\
&\left(\L \,\Pi^{> \NL}u\right)_n =  
\begin{cases}
	0 & \quad \text{for } \vert n\vert \leq \NL\\
	\lambda_n u_n &\quad \text{for } \vert n\vert > \NL.
\end{cases}
\end{aligned}
\right.
\end{equation*}

Before going further, note that $Q_{\NL}$ can also be seen as a constant operator on $\Pi^{\leq \NL}\X_\nu$. It will be helpful to consider the operator $Q$ defined on the whole space $\X_\nu$ as $Q \bydef Q_{\NL} + \Pi^{>\NL}$, i.e.,
\begin{equation*}
\left\{
\begin{aligned}
&Q \,\Pi^{\leq \NL} u =  Q_{\NL}\Pi^{\leq \NL} u \\
&Q \,\Pi^{>\NL} u = \Pi^{>\NL} u.
\end{aligned}
\right.
\end{equation*}
Then, we can write
\begin{align*}
\L = Q\Lambda Q^{-1},
\end{align*}
where $\Lambda$ is the infinite diagonal matrix having diagonal entries $\lambda_n$ for all $n\in\ZZ$. We recall that these $\lambda_n$ are defined explicitly in~\eqref{eq:deflambda} for $\vert n\vert> \NL$, and that the remaining ones for $\vert n\vert \leq  \NL$ are chosen when we fix $\Lambda_{\NL}$ (in practice, we take these to be numerically computed eigenvalues of $\Pi^{\leq \NL} \tL \,\Pi^{\leq \NL}$, see Remark~\ref{rem:diagonalization} for more details).

Note that, for any $t\in\RR$ we have
\begin{align*}
e^{t\L} = Q\, e^{t\Lambda}\, Q^{-1},
\end{align*}
where $\Lambda$ is a diagonal matrix whose entries are known explicitly, and we also know $Q$ and $Q^{-1}$ explicitly.

\medskip

Now that $\L$ is well defined, we are ready to derive computable bounds $Y$, $Z$ and $W$ satisfying assumptions~\eqref{e:def_Y_M1}-\eqref{e:def_W_M1} of Theorem~\ref{thm:NewtonKantorovich} in the case $M=1$, for the operator $T$ of~\eqref{eq:T_general}, the space $X=\X_\nu$, and the approximate solution $\bar{x} = \bu$. 
Note that, since $\L$ is time independent here, so is $\gamma \bydef F - \L$, hence we write $\gamma(u(s))$ instead of $\gamma(s,u(s))$ within Section~\ref{sec:onedomain}. We now study the operator $T:\X_\nu\to\X_\nu$ given by
\begin{align}
\label{eq:T_L}
T(u)(t) = e^{t\L} \uin + \int_0^t e^{(t-s)\L} \gamma(u(s)) \d s,\qquad t\in[0,\tau],
\end{align}
which is what the general formula~\eqref{eq:T_general} becomes in this particular setting.

\begin{remark}\label{rem:diagonalization}
We may run into difficulties when trying to approximately diagonalize $\Pi^{\leq \NL} \tL \,\Pi^{\leq \NL}$ in cases with eigenvalues of higher multiplicity or when eigenvalues are very close to each other. The main issue appearing in the estimates is that when $Q_{\NL}$ is badly conditioned, its inverse $Q_{\NL}^{-1}$ is large (in operator norm). One way we mediate such unfavourable situations in the code is to diagonalize a small perturbation of $\Pi^{\leq \NL} \tL \,\Pi^{\leq \NL}$. This slightly increases the size of the defect $\Pi^{\leq \NL} \tL \,\Pi^{\leq \NL} - \L_{\NL}$, but it can reduce the norm of $Q_N^{-1}$, and the net result may be an improvement of the bounds. 

Additionally, especially when $\Pi^{\leq \NL} \tL \,\Pi^{\leq \NL}$ is not symmetric (and hence diagonalization is less well-conditioned generally), the size of $Q_N^{-1}$ can vary substantially among different subdomains and it can also depend sensitively on the location of the gridpoints that define the subdomains. We refer to Section~\ref{sec:parameters} for more details on how we remedy these adverse effects when optimizing the grid. 

Finally, one alternative, which will likely improve robustness but which we leave for future work, is to allow for some off-diagonal terms in $\Lambda_N$, such as those appearing in a Jordan normal form, for example, and simply push through the effects of the evaluation of $e^{t\Lambda_N}$ for such a nondiagonal matrix $\Lambda_N$ in the bounds.
\end{remark}

\begin{remark}\label{rem:symmetry}
When the problem~\eqref{eq:PDE} conserves even or odd symmetry, we restrict the analysis to the subspace spanned by either cosines or sines. In that case, the (near) diagonalization is also performed on this subspace. This is of particular interest when integrating to infinity, see Section~\ref{sec:infinity}, since working in the symmetric subspace lifts the translation invariance and removes the associated zero eigenvalue from the spectrum of the stationary state. On a technical level, in the code the change of basis $Q^{-1}$ also incorporate a translation from (two-sided) symmetric Fourier coefficients to (one-sided) sine or cosine coefficients, and vice versa for $Q$.	
\end{remark}

\subsection{Y-bound}
\label{sec:Ybounds}

In this subsection, we deal with the residual bound $Y$ satisfying assumption~\eqref{e:def_Y_M1} of Theorem~\ref{thm:NewtonKantorovich}. We assume that $\Nin \geq \NL$ and that there exists an explicit $\epsilon^{\mathrm{in}}\geq 0$ such that the initial data $\uin$ satisfies $\left\Vert \Pi^{>\Nin}\uin \right\Vert_{\ell^1_\nu} \leq \epsilon^{\mathrm{in}}$. That is, we allow for an initial data which has an infinite Fourier series, but require an explicit bound on the tail. 

We then split the computation of the $Y$ bound as follows:
\begin{align*}
\left(T(\bu)-\bu\right)(t) &= e^{ t \L} \uin +  \int_{0}^{t} e^{(t-s)\L } 
		\gamma(\bu(s)) \mbox{d} s - \bu \left( t \right) \\
&= \left(e^{ t \L} \Pi^{\leq \Nin}\uin +  \int_{0}^{t} e^{(t-s)\L } 
		\gamma(\bu(s)) \mbox{d} s - \bu \left( t \right)\right) +  e^{ t \L} \Pi^{> \Nin}\uin.
\end{align*}
Recalling that $\left\Vert \cdot \right\Vert_{\X_\nu} = \left\Vert \left\Vert \cdot \right\Vert_{C^0} \right\Vert_{\ell^1_\nu}$, note that 
\begin{align*}
&\left\Vert t\mapsto e^{ t \L} \Pi^{\leq \Nin}\uin +  \int_{0}^{t} e^{(t-s)\L } 
		\gamma(\bu(s)) \mbox{d} s - \bu \left( t \right) \right\Vert_{\X_\nu}  \\
		&\qquad\qquad\qquad= \left\Vert \left\Vert t\mapsto Q\left(e^{ t \Lambda} Q^{-1}\Pi^{\leq \Nin}\uin +  \int_{0}^{t} e^{(t-s)\Lambda } 
		Q^{-1}\gamma(\bu(s)) \mbox{d} s \right) - \bu \left( t \right) \right\Vert_{C^0} \right\Vert_{\ell^1_\nu}
\end{align*}
can be computed almost exactly, meaning that a tight and guaranteed enclosure can be obtained. Indeed, this calculation only involves finitely many Fourier modes, since $\bu$ has only finitely many Fourier modes and all the nonlinear terms in $\gamma$ are polynomials. We then use the procedure described in Section~\ref{sec:interp} to get an explicit and sharp upper-bound of the $C^0$ norm for each of these modes.

\begin{remark}
\label{rem:splitting}
Let us denote the map appearing in the above estimate by $\varphi$, i.e.
\begin{align*}
\varphi(t) \bydef  e^{ t \L} \Pi^{\leq \Nin}\uin +  \int_{0}^{t} e^{(t-s)\L } 
		\gamma(\bu(s)) \mbox{d} s - \bu \left( t \right).
\end{align*}
According to Section~\ref{sec:interp}, we want to estimate $\left\Vert \varphi \right\Vert_{C^0}$ by $\left\Vert P_{\tilde K}\varphi \right\Vert_{C^0} + \left\Vert (I-P_{\tilde K})\varphi \right\Vert_{C^0}$. However, directly controlling the interpolation error $(I-P_{\tilde K})\varphi$ can be somewhat costly,  as it requires rigorously and explicitly bounding $\varphi$ on an ellipse in the complex plane~\cite{Tre13} ($C^k$-type interpolation errors are also costly and give worst bounds). This task becomes much easier if one accepts to first split the error estimate as
\begin{align}
\label{eq:splitinterperror}
\left\Vert (I-P_{\tilde K})\varphi \right\Vert_{C^0} \leq \left\Vert (I-P_{\tilde K})\varphi_{\mathrm{exp}} \right\Vert_{C^0} + \left\Vert (I-P_{\tilde K})\varphi_{\mathrm{int}} \right\Vert_{C^0},
\end{align}
with
\begin{align}
\label{eq:splittingold}
\varphi_{\mathrm{exp}}(t) =   e^{ t \L} \Pi^{\leq \Nin}\uin \qquad\text{and}\qquad \varphi_{\mathrm{int}}(t) = \int_{0}^{t} e^{(t-s)\L } 
		\gamma(\bu(s)) \mbox{d} s.
\end{align}
We note that there is no interpolation error for $\bu$ as soon as $\tilde{K}$ is larger than the degree in time used for defining $\bu$, which is always the case in practice. After having made this splitting, it becomes much easier to get a computable estimate for each term, see~\cite[Lemma 4.5 and Lemma 4.8]{BerBreShe24}, and this works well in practice.

However, with this splitting both $\varphi_{\mathrm{exp}}$ and $\varphi_{\mathrm{int}}$ are potentially large, even if $\varphi$ itself should be small (assuming $\bu$ is an accurate approximate solution), which means one might need a relatively large $\tilde{K}$ to get the right hand side of~\eqref{eq:splitinterperror} small. In order to alleviate this drawback, we can look for a different splitting for which $\varphi_{\mathrm{int}}$ would be smaller, because we can then expect $\left\Vert (I-P_{\tilde K})\varphi_{\mathrm{int}} \right\Vert_{C^0}$ to be small while using a smaller $\tilde{K}$. Note that if we achieve this, then the other interpolation error $\left\Vert (I-P_{\tilde K})\varphi_{\mathrm{exp}} \right\Vert_{C^0}$ must also be small (provided $\left\Vert (I-P_{\tilde K})\varphi \right\Vert_{C^0}$ itself was indeed small). In order to define this better splitting, let us first introduce the time-average $\bU \in\ell^1_\nu$ of $\bu$:
\begin{align*}
\bU = \frac{1}{\tau}\int_0^\tau \bu(s) \mbox{d} s,
\end{align*}
and then rewrite 
\begin{align*}
\varphi(t) &= e^{ t \L} \Pi^{\leq \Nin}\uin +  \int_{0}^{t} e^{(t-s)\L } 
		\gamma(\bU) \mbox{d} s + \int_{0}^{t} e^{(t-s)\L } \left(\gamma(\bu(s))
		-\gamma(\bU)\right) \mbox{d} s - \bu \left( t \right) \\
		&= e^{ t \L} \Pi^{\leq \Nin}\uin + \L^{-1}\left(e^{t\L}-I\right)\gamma(\bU) + \int_{0}^{t} e^{(t-s)\L } \left(\gamma(\bu(s))
		-\gamma(\bU)\right) \mbox{d} s - \bu \left( t \right).
\end{align*}
Note that
\begin{align*}
\L^{-1}\left(e^{t\L}-I\right) = Q \Lambda^{-1}\left(e^{t\Lambda}-I\right)Q^{-1},
\end{align*}
where $\Lambda$ is a diagonal matrix with entries $\lambda_n$, and where $\lambda_n^{-1}\left(e^{t\lambda_n}-1\right)$ must be replaced by $t$ if $\lambda_n=0$. We can then repeat the above splitting of the interpolation error estimate~\eqref{eq:splitinterperror}, but this time with
\begin{align}
\label{eq:splittingnew}
\varphi_{\mathrm{exp}}(t) =   e^{ t \L} \Pi^{\leq \Nin}\uin + \L^{-1}\left(e^{t\L}-I\right)\gamma(\bU) \quad\text{and}\quad \varphi_{\mathrm{int}}(t) = \int_{0}^{t} e^{(t-s)\L } 
		\left(\gamma(\bu(s))
		-\gamma(\bU)\right) \mbox{d} s.
\end{align}
If $\bu(t)$ remains somewhat close to $\bU$ for all $t\in[0,\tau]$, then we can expect this new splitting~\eqref{eq:splittingnew} to perform better than~\eqref{eq:splittingold}, meaning that we can retain a small interpolation error bound while using a smaller value of $\tilde{K}$. This new splitting is what we actually use in practice when implementing the bounds.
\end{remark}

Regarding the potential error term coming from the initial data, using that $\Nin\geq \NL$ we get
\begin{align*}
\left\Vert e^{ t \L} \Pi^{> \Nin}\uin \right\Vert_{\X_\nu} &\leq \left\Vert \left\Vert Qe^{ t \Lambda} Q^{-1}\Pi^{> \Nin}\right\Vert_{C^0} \right\Vert_{\ell^1_\nu} \epsilon^{\mathrm{in}} \\
&= \left\Vert \left\Vert e^{ t \Lambda} \Pi^{> \Nin}\right\Vert_{C^0} \right\Vert_{\ell^1_\nu} \epsilon^{\mathrm{in}} \\
&= \sup_{\vert n\vert > \Nin} \max_{t\in[0,\tau]} \left\vert e^{t\lambda_n}\right\vert \epsilon^{\mathrm{in}} .
\end{align*}
Introducing, for all $N\in\NN$,
\begin{align}
\label{eq:mu}
\mu_N \bydef \exp \left( \tau \sup_{\vert n\vert > N} \Re\left(\lambda_n\right)^+ \right),
\end{align}
where $x^+ \bydef \max(x,0)$, we can therefore consider
\begin{align*}
Y \bydef \left\Vert t\mapsto e^{ t \L} \Pi^{\leq \Nin}\uin +  \int_{0}^{t} e^{(t-s)\L } 
		\gamma(\bu(s)) \mbox{d} s - \bu \left( t \right) \right\Vert_{\X_\nu} + \mu_{\Nin} \epsilon^{\mathrm{in}},
\end{align*}
which is computable, or rather we can get a sharp upper bound on it (as explained in Remark~\ref{rem:splitting}), and satisfies assumption~\eqref{e:def_Y_M1} of Theorem~\ref{thm:NewtonKantorovich}.

\subsection{Z-bounds}
\label{sec:Zbounds}

In this subsection, we now focus on the bound $Z$ satisfying assumption~\eqref{e:def_Z_M1} of Theorem~\ref{thm:NewtonKantorovich}. We first introduce notations for various quantities that are going to appear in the derivation of this bound $Z$.
\begin{definition}[Notation for the $Z$-bound]
\label{def:notation_Z}
${}$
\begin{itemize}
\item For all $n\in\ZZ$, $\tb_n \bydef \dfrac{e^{\ds\tau \Re(\lambda_n)}-1}{\Re(\lambda_n)}$, with the convention that $\tb_n \bydef \tau$ if $\Re(\lambda_n) = 0$.
\item $B \bydef \vert Q\vert\, \Re(\Lambda)^{-1} \left(e^{\tau \Re(\Lambda)} - I\right) = \vert Q\vert \diag((\tb_n)_{n\in\ZZ})$. 
\item For all $j=0,\ldots,2J-1$, $\chi^{(j)}_{\NL} \bydef \sup\limits_{\vert n\vert >\NL} \vert n\vert^j\, \tilde{b}_n$. Note that $\chi^{(j)}_{\NL}$ is computable (see, e.g., \cite[Appendix D]{BerBreShe24}). 
\item For all $j=0,\ldots,2J-1$, $\beta^{(j)} \bydef \left\Vert\left(g^{(j)}\right)'(\bu) - \bar{v}_0^{(j)} \right\Vert_{C^0} \in \ell^1_\nu$.
\item Let $\bN\in\NN$, such that for all $\vert n\vert >\bN$ and all $j=0,\ldots,2J-1$, $\beta^{(j)}_n = 0$. Let $\Gamma$ be a linear operator such that, for all $a\in \ell^1_\nu$
\begin{align*}
\Gamma\, \Pi^{\leq \bN + \NL} \vert a\vert \geq \left\Vert Q^{-1} D\gamma(\bu) \right\Vert_{C^0} \Pi^{\leq \bN + \NL} \vert a\vert,
\end{align*}
and
\begin{align*}
\Gamma\, \Pi^{> \bN + \NL}a &\bydef  \sum_{j=0}^{2J-1} \vert \D^j\vert \left( \beta^{(j)} \ast \Pi^{> \bN +\NL} a \right).
\end{align*}
The operator $Q^{-1} D\gamma(\bu) \Pi^{\leq \bN + \NL}$ can be seen as a finite matrix whose coefficients are smooth functions of time, therefore $\left\Vert Q^{-1} D\gamma(\bu) \right\Vert_{C^0} \Pi^{\leq \bN + \NL}$ is a finite matrix of real numbers, for which element-wise sharp upper-bounds can be obtained following Section~\ref{sec:interp}. In practice,  we define $\Gamma\, \Pi^{\leq \bN + \NL}$ with these upper-bounds.
\end{itemize}
\end{definition}

We are now ready to derive the $Z$ bound. We start by reducing  this bound to controling an $\ell^1_\nu$ operator norm.
\begin{lemma}
\label{lem:fromZtoell1}
With the notations of Definition~\ref{def:notation_Z},
\begin{align*}
\left\Vert DT(\bu) \right\Vert_{B(\X_\nu,\X_\nu)} \leq \left\Vert B\Gamma \right\Vert_{B(\ell^1_\nu,\ell^1_\nu)}.
\end{align*}
\end{lemma}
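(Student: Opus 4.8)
The plan is to unwind the definition of $DT(\bu)$ acting on $\X_\nu$, pass to the $Q$-conjugated (diagonal) picture, and then estimate the time-convolution against $e^{(t-s)\Lambda}$ pointwise in time by the elementary scalar bound $\max_{t\in[0,\tau]}\int_0^t e^{(t-s)\Re(\lambda_n)}\,\d s \le \tb_n$, which is exactly why the constants $\tb_n$ and the operator $B = \vert Q\vert \diag((\tb_n)_n)$ were introduced in Definition~\ref{def:notation_Z}. Concretely, since $\L$ is time-independent here and $\gamma = F - \L$, we have
\begin{align*}
DT(\bu)h(t) = \int_0^t e^{(t-s)\L} D\gamma(\bu(s))h(s)\,\d s = \int_0^t Q\,e^{(t-s)\Lambda}\,Q^{-1}D\gamma(\bu(s))h(s)\,\d s.
\end{align*}

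First I would take $\left\Vert\cdot\right\Vert_{\X_\nu}$-norms and move everything inside. Write $w(s) \bydef Q^{-1}D\gamma(\bu(s))h(s) \in \ell^1_\nu$ for each $s$. Since $\Lambda$ is diagonal, $\left(e^{(t-s)\Lambda}w(s)\right)_n = e^{(t-s)\lambda_n}w_n(s)$, and $\vert e^{(t-s)\lambda_n}\vert = e^{(t-s)\Re(\lambda_n)}$ for $0\le s\le t$. Hence componentwise, for each $n$,
\begin{align*}
\left\vert \left(\int_0^t e^{(t-s)\Lambda}w(s)\,\d s\right)_n \right\vert \le \int_0^t e^{(t-s)\Re(\lambda_n)}\left\vert w_n(s)\right\vert\,\d s \le \left(\int_0^t e^{(t-s)\Re(\lambda_n)}\,\d s\right)\left\Vert w_n\right\Vert_{C^0} \le \tb_n\left\Vert w_n\right\Vert_{C^0},
\end{align*}
where the last step uses that $\int_0^t e^{(t-s)\Re(\lambda_n)}\,\d s = \tb_n^{(t)} \le \tb_n$ with $\tb_n^{(t)}$ the same expression with $\tau$ replaced by $t$ (monotone in $t$ when $\Re(\lambda_n)\ge 0$, and bounded by $t\le\tau$ likewise when $\Re(\lambda_n)<0$; in all cases $\le\tb_n$). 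Taking the $C^0$-norm in $t$ of the left-hand side and then applying $Q$ (using $\vert Qa\vert \le \vert Q\vert\,\vert a\vert$ componentwise and monotonicity of $\left\Vert\cdot\right\Vert_{\ell^1_\nu}$ with respect to the componentwise order), we get
\begin{align*}
\left\Vert DT(\bu)h\right\Vert_{\X_\nu} \le \left\Vert\, \vert Q\vert \diag((\tb_n)_n)\, \left\Vert w\right\Vert_{C^0}\right\Vert_{\ell^1_\nu} = \left\Vert B\left\Vert w\right\Vert_{C^0}\right\Vert_{\ell^1_\nu}.
\end{align*}

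It then remains to bound $\left\Vert w\right\Vert_{C^0} = \left(\left\Vert\left(Q^{-1}D\gamma(\bu(\cdot))h(\cdot)\right)_n\right\Vert_{C^0}\right)_n$ by $\Gamma\,\vert\left\Vert h\right\Vert_{C^0}\vert$ componentwise (in the $\ell^1_\nu$ order), after which $\left\Vert B\left\Vert w\right\Vert_{C^0}\right\Vert_{\ell^1_\nu} \le \left\Vert B\Gamma\left\Vert h\right\Vert_{C^0}\right\Vert_{\ell^1_\nu} \le \left\Vert B\Gamma\right\Vert_{B(\ell^1_\nu,\ell^1_\nu)}\left\Vert\left\Vert h\right\Vert_{C^0}\right\Vert_{\ell^1_\nu} = \left\Vert B\Gamma\right\Vert_{B(\ell^1_\nu,\ell^1_\nu)}\left\Vert h\right\Vert_{\X_\nu}$, giving the claim. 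For the low modes ($\vert n\vert \le \bN+\NL$) this is precisely how $\Gamma\,\Pi^{\le\bN+\NL}$ was defined — via the matrix $\left\Vert Q^{-1}D\gamma(\bu)\right\Vert_{C^0}\Pi^{\le\bN+\NL}$ — combined with $\left\Vert (Ag)(\cdot)\right\Vert_{C^0} \le \left\Vert A(\cdot)\right\Vert_{C^0}\left\Vert g\right\Vert_{C^0}$ entrywise for a matrix $A$ of continuous functions. For the high modes one uses that $Q^{-1}$ acts as the identity on $\Pi^{>\NL}$, that $D\gamma(\bu) = DF(\bu) - \L$ acts on $\Pi^{>\bN+\NL}a$ via $\sum_j \D^j\big(\big((g^{(j)})'(\bu) - \bar v_0^{(j)}\big)\ast \Pi^{>\bN+\NL}a\big) = \sum_j \D^j(\beta^{(j)}\ast\Pi^{>\bN+\NL}a)$ (the diagonal part of $\L$ cancels the constant term $\bar v_0^{(j)}$, and $\Pi^{\le\NL}$ contributions vanish since we are looking at modes $\vert n\vert > \bN+\NL \ge \NL$), together with the triangle inequality $\left\Vert\cdot\right\Vert_{C^0}$ inside the convolution and $\vert(\D^j a)_n\vert = \vert n\vert^j\vert a_n\vert$, yielding exactly the stated formula for $\Gamma\,\Pi^{>\bN+\NL}$.

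The main obstacle is bookkeeping rather than depth: one must be careful that the three monotonicity/triangle-inequality steps (pulling $\left\Vert\cdot\right\Vert_{C^0}$ through the integral, through $Q$, and through $D\gamma(\bu)$) compose correctly in the componentwise partial order on nonnegative sequences, and that the splitting of $\Gamma$ across $\Pi^{\le\bN+\NL}$ and $\Pi^{>\bN+\NL}$ genuinely dominates $\left\Vert Q^{-1}D\gamma(\bu)h\right\Vert_{C^0}$ for \emph{all} $h$, including the mixed terms where low and high modes of $h$ interact through the convolution. The latter is handled because, for $\vert n\vert>\bN+\NL$, the convolution $\beta^{(j)}\ast h$ at index $n$ only sees $h_m$ with $\vert n-m\vert\le\bN$, hence $\vert m\vert > \NL$, so the high-mode block of $\Gamma$ already accounts for every contribution reaching those output indices; and the finite block of $\Gamma$ is defined to dominate the full operator $Q^{-1}D\gamma(\bu)$ restricted to output modes $\vert n\vert\le\bN+\NL$, so no cross terms are missed there either. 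Once these orderings are set up cleanly, the estimate reduces to the single scalar inequality $\int_0^t e^{(t-s)\Re(\lambda_n)}\,\d s\le\tb_n$ and the Banach-algebra bound of Lemma~\ref{lem:BanachAlgebra}, and the $\ell^1_\nu$ operator norm of $B\Gamma$ is then what one actually computes.
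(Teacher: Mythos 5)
Your proof is correct and follows essentially the same route as the paper's: pass to the $Q$-conjugated diagonal picture, bound the time-convolution componentwise via $\int_0^t e^{(t-s)\Re\lambda_n}\,\d s\leq\tb_n$ to produce the operator $B$, and then dominate $\left\Vert Q^{-1}D\gamma(\bu)h\right\Vert_{C^0}$ by $\Gamma\left\Vert h\right\Vert_{C^0}$ by splitting into $\Pi^{\leq\bN+\NL}$ (by definition of $\Gamma$) and $\Pi^{>\bN+\NL}$ (using the explicit convolution form of $D\gamma(\bu)$ on high modes together with $\vert Q^{-1}\vert\,\Pi^{>\NL}=\Pi^{>\NL}$). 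One small remark on your parenthetical justification: $\int_0^t e^{(t-s)\Re\lambda_n}\,\d s\leq\tb_n$ holds simply because $t\mapsto(e^{t\alpha}-1)/\alpha$ has derivative $e^{t\alpha}>0$ and is thus increasing for every real $\alpha$, so no sign split is needed; the alternative bound you offer for $\Re(\lambda_n)<0$, namely that the integral is $\leq t\leq\tau$, does not by itself give $\leq\tb_n$ since $\tb_n<\tau$ in that case, though this is a cosmetic issue and the conclusion is correct.
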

\begin{proof}
We consider an abitrary $h$ in $\X_\nu$, and estimate
\begin{align*}
[DT(\bu)h] (t) &= \int_{0}^t e^{(t-s)\L} D\gamma(\bu(s))h(s) \d s \\
&= Q \int_{0}^t e^{(t-s)\Lambda} Q^{-1}D\gamma(\bu(s))h(s) \d s.
\end{align*}
Taking the $C^0$ norm in time component-wise, we get
\begin{align*}
\left\Vert DT(\bu)h \right\Vert_{C^0} &\leq \vert Q\vert  \int_{0}^t e^{(t-s)\Re(\Lambda)} \d s \left\Vert Q^{-1}D\gamma(\bu)h\right\Vert_{C^0} \\
&\leq B  \left\Vert Q^{-1}D\gamma(\bu)\right\Vert_{C^0}\left\Vert h\right\Vert_{C^0}.
\end{align*}
We now check that
\begin{align*}
\left\Vert Q^{-1}D\gamma(\bu)\right\Vert_{C^0}\left\Vert h\right\Vert_{C^0} \leq \Gamma \left\Vert h\right\Vert_{C^0}.
\end{align*}
According to the definition of $\Gamma$, this is automatic for the modes $\vert n\vert \leq \NL + \bN$, and we only have to prove that
\begin{align*}
\left\Vert Q^{-1}D\gamma(\bu)\right\Vert_{C^0}\, \Pi^{> \bN + \NL} \left\Vert h\right\Vert_{C^0} \leq \Gamma \, \Pi^{> \bN + \NL} \left\Vert h\right\Vert_{C^0}.
\end{align*}
By using~\eqref{eq:deftL} we obtain
\begin{align*}
D\gamma(\bu) \Pi^{> \bN + \NL}h  &= DF(\bu)\Pi^{> \bN + \NL} h - \L \Pi^{> \bN + \NL} h \\
 &= DF(\bu)\Pi^{> \bN + \NL}h - \tL \Pi^{> \bN + \NL}h \\
&= \sum_{j=0}^{2J-1} \D^j \left(\left[\left(g^{(j)}\right)'(\bu)\right]\ast \Pi^{> \bN + \NL} h \right) -  \sum_{j=0}^{2J-1}  \D^j \left(\bv^{(j)}_0\Pi^{> \bN + \NL} h\right)  \\
 &=  \sum_{j=0}^{2J-1} \D^j \left(\left[\left(g^{(j)}\right)'(\bu) - \bar{v}^{(j)}_0 \right]\ast \Pi^{> \bN + \NL} h \right),
\end{align*}
hence
\begin{align*}
\left\Vert Q^{-1}D\gamma(\bu)\right\Vert_{C^0}\, \Pi^{> \bN + \NL} \left\Vert h\right\Vert_{C^0} & \leq \vert Q^{-1}\vert \sum_{j=0}^{2J-1} \vert\D^j\vert \left(\left\Vert\left(g^{(j)}\right)'(\bu) - \bar{v}^{(j)}_0 \right\Vert_{C^0} \ast \Pi^{> \bN + \NL} \left\Vert h\right\Vert_{C^0} \right) \\
& = \vert Q^{-1}\vert \sum_{j=0}^{2J-1} \vert\D^j\vert \left(\beta^{(j)} \ast \Pi^{> \bN + \NL} \left\Vert h\right\Vert_{C^0} \right).
\end{align*}
Since $\beta^{(j)} \in\Pi^{\leq \bN}\ell^1_\nu$ for all $j$, we get
\begin{align*}
\beta^{(j)} \ast \Pi^{> \bN + \NL} \left\Vert h\right\Vert_{C^0} = \Pi^{>\NL} \left(\beta^{(j)} \ast \Pi^{> \bN + \NL} \left\Vert h\right\Vert_{C^0}\right),
\end{align*}
and noting that $\vert Q^{-1}\vert \Pi^{>\NL} = \Pi^{>\NL}$, we infer that
\begin{align*}
\left\Vert Q^{-1}D\gamma(\bu)\right\Vert_{C^0}\, \Pi^{> \bN + \NL} \left\Vert h\right\Vert_{C^0} & \leq \sum_{j=0}^{2J-1} \vert\D^j\vert \left(\beta^{(j)} \ast \Pi^{> \bN + \NL} \left\Vert h\right\Vert_{C^0} \right)\\
& = \Gamma\, \Pi^{> \bN + \NL} \left\Vert h\right\Vert_{C^0}.
\end{align*}
We have therefore proven that
\begin{align*}
\left\Vert DT(\bu)h \right\Vert_{C^0} \leq B\Gamma \left\Vert h \right\Vert_{C^0},
\end{align*}
and the lemma then follows directly from the fact that $\left\Vert \cdot \right\Vert_{\X_\nu} = \left\Vert \left\Vert \cdot \right\Vert_{C^0} \right\Vert_{\ell^1_\nu}$.
\end{proof}
We now provide a computable upper-bound for $\left\Vert B\Gamma \right\Vert_{B(\ell^1_\nu,\ell^1_\nu)}$.
\begin{proposition}
\label{prop:ell1forZ}
We have
\begin{align*}
\left\Vert B\Gamma \right\Vert_{B(\ell^1_\nu,\ell^1_\nu)} \leq \max\left( \left\Vert B\Gamma \, \Pi^{\leq \bN+\NL} \right\Vert_{B(\ell^1_\nu,\ell^1_\nu)}, \sum_{j=0}^{2J-1} \left\Vert \beta^{(j)}\right\Vert_{\ell^1_\nu} \chi^{(j)}_{\NL}\right).
\end{align*}
\end{proposition}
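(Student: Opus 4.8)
The plan is to split $\Gamma$ into its two defining pieces—the one acting on $\Pi^{\leq \bN+\NL}\ell^1_\nu$ and the one acting on the tail $\Pi^{> \bN+\NL}\ell^1_\nu$—and to exploit that the weighted norm $\|\cdot\|_{\ell^1_\nu}$ decomposes additively over these two disjoint blocks of Fourier modes. Concretely, for $a\in\ell^1_\nu$ I would write $B\Gamma a = B\Gamma\,\Pi^{\leq\bN+\NL}a + B\Gamma\,\Pi^{>\bN+\NL}a$, bound each summand by the corresponding operator norm applied to $\|\Pi^{\leq\bN+\NL}a\|_{\ell^1_\nu}$, resp.\ $\|\Pi^{>\bN+\NL}a\|_{\ell^1_\nu}$ (using that the projections are idempotent), and then combine $\|\Pi^{\leq\bN+\NL}a\|_{\ell^1_\nu}+\|\Pi^{>\bN+\NL}a\|_{\ell^1_\nu}=\|a\|_{\ell^1_\nu}$ with the elementary inequality $c_1x+c_2y\le\max(c_1,c_2)(x+y)$ for $x,y\ge 0$. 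This reduces the statement to proving the tail estimate
\begin{align*}
\left\Vert B\Gamma\,\Pi^{>\bN+\NL}\right\Vert_{B(\ell^1_\nu,\ell^1_\nu)} \;\le\; \sum_{j=0}^{2J-1}\left\Vert\beta^{(j)}\right\Vert_{\ell^1_\nu}\chi^{(j)}_{\NL},
\end{align*}
the first term of the maximum being kept as is (it is the $\ell^1_\nu$ operator norm of a finite matrix, computable by a standard weighted column-sum formula).

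For the tail estimate the key point is a support-tracking argument, exactly in the spirit of the computation already carried out in the proof of Lemma~\ref{lem:fromZtoell1}. Since each $\beta^{(j)}\in\Pi^{\leq\bN}\ell^1_\nu$ and $\Pi^{>\bN+\NL}a$ is supported on modes $|n|>\bN+\NL$, the convolution $\beta^{(j)}\ast\Pi^{>\bN+\NL}a$ is supported on $|n|>\NL$; applying $|\D^j|$ and then $\diag((\tb_n)_{n\in\ZZ})$ preserves this support. Recalling $Q=Q_{\NL}+\Pi^{>\NL}$, so that $|Q|\,\Pi^{>\NL}=\Pi^{>\NL}$, the factor $|Q|$ in $B=|Q|\diag((\tb_n)_{n\in\ZZ})$ acts as the identity on that support, hence $B\Gamma\,\Pi^{>\bN+\NL}a=\sum_{j}\diag((\tb_n)_{n\in\ZZ})\,|\D^j|(\beta^{(j)}\ast\Pi^{>\bN+\NL}a)$. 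On $|n|>\NL$ the operator $\diag((\tb_n)_{n\in\ZZ})\,|\D^j|$ is multiplication by $|n|^j\tb_n\ge 0$ (note $\tb_n>0$ since $x\mapsto(e^{\tau x}-1)/x$ is positive for all real $x$ and equals $\tau>0$ at $x=0$), whose $\ell^1_\nu$-multiplier norm over that support is precisely $\chi^{(j)}_{\NL}$. Combining this with the Banach algebra bound of Lemma~\ref{lem:BanachAlgebra} and $\|\Pi^{>\bN+\NL}a\|_{\ell^1_\nu}\le\|a\|_{\ell^1_\nu}$, then summing over $j$ via the triangle inequality, yields the claimed tail bound.

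The main thing to be careful about is precisely this bookkeeping of Fourier supports: it is what guarantees that the convolution with the finitely-supported $\beta^{(j)}$ lands entirely in the range $|n|>\NL$, where $B$ collapses to a pure diagonal multiplier, so that one never needs to control the (potentially large) operator norm of $|Q_{\NL}|$ or $|Q_{\NL}^{-1}|$ in the tail—and this is exactly why the bound is sharp. Everything else is routine: the triangle inequality, the fact that a diagonal operator on $\ell^1_\nu$ has operator norm equal to the supremum of the moduli of its entries over the support on which it is applied, and the positivity of $\tb_n$. I do not anticipate any genuine obstacle beyond being precise about which modes each operator produces.
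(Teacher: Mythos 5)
Your proposal is correct and follows essentially the same route as the paper's proof: the block split of the $\ell^1_\nu$ operator norm over the disjoint mode sets $\{|n|\leq\bN+\NL\}$ and $\{|n|>\bN+\NL\}$, the observation that $\beta^{(j)}\ast\Pi^{>\bN+\NL}a$ lands in $\Pi^{>\NL}\ell^1_\nu$ where $|Q|$ acts as the identity, and the resulting diagonal multiplier norm $\chi^{(j)}_{\NL}$ combined with the Banach algebra bound. The only cosmetic difference is that the paper invokes the exact equality $\|B\Gamma\|=\max(\|B\Gamma\Pi^{\leq\bN+\NL}\|,\|B\Gamma\Pi^{>\bN+\NL}\|)$ as a known property of weighted $\ell^1$ operator norms, whereas you rederive the (sufficient) inequality from the triangle inequality and the elementary bound $c_1x+c_2y\le\max(c_1,c_2)(x+y)$.
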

\begin{proof}
From the properties of the $\ell^1$ operator norm, we have
\begin{align*}
\left\Vert B\Gamma \right\Vert_{B(\ell^1_\nu,\ell^1_\nu)} = \max\left( \left\Vert B\Gamma \, \Pi^{\leq \bN+\NL} \right\Vert_{B(\ell^1_\nu,\ell^1_\nu)}, \left\Vert B\Gamma \, \Pi^{> \bN+\NL} \right\Vert_{B(\ell^1_\nu,\ell^1_\nu)}\right),
\end{align*}
and we simply need to estimate the second term. We recall that $\Gamma \, \Pi^{> \bN+\NL} = \Pi^{> \NL}\, \Gamma \, \Pi^{> \bN+\NL}$, and thus
\begin{align*}
\left\Vert B\Gamma \,\Pi^{> \bN+\NL} \right\Vert_{B(\ell^1_\nu,\ell^1_\nu)} &\leq \sum_{j=0}^{2J-1} \left\Vert \Pi^{> \NL} B \vert \D^{j}\vert \right\Vert_{B(\ell^1_\nu,\ell^1_\nu)} \left\Vert \beta^{(j)}  \right\Vert_{\ell^1_\nu}.
\end{align*}
Since $\Pi^{> \NL} B \vert \D^{j}\vert$ is a diagonal operator, we simply have
\begin{equation*}
\left\Vert \Pi^{> \NL} B \vert \D^{j}\vert \right\Vert_{B(\ell^1_\nu,\ell^1_\nu)} \leq \sup_{\vert n\vert >\NL} \tilde{b}_n \vert n\vert^j =  \chi^{(j)}_{\NL}. \qedhere
\end{equation*}
\end{proof}
Note that each column of $B\Gamma$ is finite, hence $\left\Vert B\Gamma \, \Pi^{\leq \bN+\NL} \right\Vert_{B(\ell^1_\nu,\ell^1_\nu)}$ really is computable. We therefore take
\begin{align*}
Z \bydef \max\left( \left\Vert B\Gamma \, \Pi^{\leq \bN+\NL} \right\Vert_{B(\ell^1_\nu,\ell^1_\nu)}, \sum_{j=0}^{2J-1} \left\Vert \beta^{(j)}\right\Vert_{\ell^1_\nu} \chi^{(j)}_{\NL}\right),
\end{align*}
which is computable and satisfies assumption~\eqref{e:def_Z_M1} of Theorem~\ref{thm:NewtonKantorovich}.

\subsection{W-bound}
\label{sec:Wbounds}

In this subsection, we finally derive a computable bound $W$ satisfying assumption~\eqref{e:def_W_M1} of Theorem~\ref{thm:NewtonKantorovich}. In order to estimate the norm of $DT(u) - DT(\bu)$, for any $u$ in $B_{\X_\nu}(\bu,\rstar)$, we write $v=u-\bu$ and compute, for any $h\in\X_\nu$,
\begin{align}
	& \left[ \left(DT \left( \bu + v \right) - DT \left( \bu \right) \right)h \right](t) \nonumber \\[1ex] 
	 & \qquad = 
	\int_{0}^{t} e^{ \left( t - s\right) \L } 
	\biggl( D\gamma \left( \bu\left(s\right) + v\left(s\right) \right) - D\gamma \left( \bu \left(s\right) \right) \biggr) h(s) \ \mbox{d}s
	\nonumber \\[1ex] 
	& \qquad = 	
 \int_{0}^{t} e^{ \left( t - s\right) \L} 
	 \int_{0}^{1} D^{2}\gamma \left( \bu \left(s\right) + \tilde{s} v\left(s\right) \right) 
		\left[ h(s), v(s) \right] \ \mbox{d} \tilde{s} \ \mbox{d}s
	\nonumber \\[1ex]
	& \qquad = 	
 \int_{0}^{t} e^{ \left( t - s\right) \L} \sum_{j=0}^{2J-1} \D^j
	 \int_{0}^{1}  \left[\left(g^{(j)}\right)'' \! \left( \bu \left(s\right) + \tilde{s} v\left(s\right)\right)\right] \ast v(s) \ast h(s)  \ \mbox{d} \tilde{s} \ \mbox{d}s .
	\label{eq:D2T}
\end{align}
Taking the $C^0$ norm component-wise gives, with $B$ defined in Definition~\ref{def:notation_Z},
\begin{align*}
\left\Vert \left(DT(\bu+ v) - DT(\bu)\right)h \right\Vert_{C^0} &\leq B\sum_{j=0}^{2J-1} \vert \D^j\vert  \left( \left \vert \left( g^{(j)} \right)'' \right \vert \left( \Vert \bar u \Vert_{C^{0}} +  \Vert v \Vert_{C^{0}} \right)  \ast \Vert v \Vert_{C^{0}} \ast \Vert h \Vert_{C^{0}}	\right),
\end{align*}
where $\left \vert \left( g^{(j)} \right)'' \right \vert$ is the polynomial defined as in Remark~\ref{rem:absg}. 
We then apply the $\ell^1_\nu$-norm and estimate, using Lemma~\ref{lem:abs_pol},
\begin{align}
\label{eq:W1}
&\left\Vert \left(DT(\bu+ v) - DT(\bu)\right)h \right\Vert_{\X_\nu} \nonumber\\
&\qquad = \left\Vert\left\Vert \left(DT(\bu+ v) - DT(\bu)\right)h \right\Vert_{C^0}\right\Vert_{\ell^1_\nu} \nonumber\\
&\qquad\leq\sum_{j=0}^{2J-1} \left\Vert B \, \vert\D^j\vert\right\Vert_{B(\ell^1_\nu,\ell^1_\nu)}   \left \vert \left( g^{(j)} \right)'' \right \vert \left( \left\Vert\left\Vert \bar u \right\Vert_{C^{0}}\right\Vert_{\ell^1_\nu} +  \left\Vert\left\Vert v \right\Vert_{C^{0}}\right\Vert_{\ell^1_\nu} \right)  \left\Vert\left\Vert v \right\Vert_{C^{0}}\right\Vert_{\ell^1_\nu} \left\Vert\left\Vert h \right\Vert_{C^{0}}\right\Vert_{\ell^1_\nu} \nonumber\\
&\qquad\leq  \sum_{j=0}^{2J-1} \left\Vert B \, \vert\D^j\vert\right\Vert_{B(\ell^1_\nu,\ell^1_\nu)} \left \vert \left( g^{(j)} \right)'' \right \vert \left( \left\Vert \bar u \right\Vert_{\X_\nu} + \rstar \right) \left\Vert v \right\Vert_{\X_\nu}  \left\Vert h \right\Vert_{\X_\nu}.
\end{align}
Notice that $\left\Vert B \, \vert\D^j\vert\right\Vert_{B(\ell^1_\nu,\ell^1_\nu)}$ can be computed explicitly (see the proof of Proposition~\ref{prop:ell1forZ}), as 
\begin{align*}
\left\Vert B \, \vert\D^j\vert\right\Vert_{B(\ell^1_\nu,\ell^1_\nu)} = \max\left(\left\Vert \Pi^{\leq \NL} B\, \vert \D^j\vert\, \Pi^{\leq \NL}\right\Vert_{B(\ell^1_\nu,\ell^1_\nu)} ,\, \chi^{(j)}_N\right) .
\end{align*}
Therefore
\begin{align*}
W = \sum_{j=0}^{2J-1} \left\Vert B \, \vert\D^j\vert\right\Vert_{B(\ell^1_\nu,\ell^1_\nu)}   \left \vert \left( g^{(j)} \right)'' \right \vert \left( \left\Vert \bar u \right\Vert_{\X_\nu} + \rstar \right)
\end{align*}
is computable and satisfies assumption~\eqref{e:def_W_M1} of Theorem~\ref{thm:NewtonKantorovich}.


\section{Longer integration times}
\label{sec:domaindecomposition}

In Section~\ref{sec:onedomain}, we have derived estimates $Y$, $Z$ and $W$ satifying the assumptions~\eqref{e:def_Y}-\eqref{e:def_W} of Theorem~\ref{thm:NewtonKantorovich}, for $T$ defined in~\eqref{eq:T_L}, assuming $\L$ was constant over the entire time interval $[0,\tau]$. However, since $\L$ was taken constant, we can only expect $T$ to be contracting if the actual solution does not vary too much over the entire time interval $[0,\tau]$, or equivalently, if $\tau$ is small enough. As explained in the introduction, a natural option to rigorously integrate over longer time intervals is to take $\L$ only piece-wise constant, hereby allowing $\L$ to better approximate $DF(\bu(t))$ over the whole interval $[0,\tau]$. We describe this strategy in detail in this section, and derive the corresponding $Y$, $Z$ and $W$ estimates.

\subsection{How to adapt the setup}
\label{sec:setup_DD}

Let $M\in\NN_{\geq 1}$ and consider a subdivision of the interval $[0,\tau]$ given by
\begin{align}
\label{eq:subdivision}
0 = \tau_0 < \tau_1 < \ldots < \tau_M = \tau.
\end{align}
For each $m=1,\ldots,M$, let $\L^{(m)}$ be a constant operator (whose precise definition will be given later on, and which generates a $C_0$ semigroup) and define
\begin{align}
\label{eq:L_piecewise}
\L(t) \bydef \L^{(m)},\quad t\in(\tau_{m-1},\tau_m].
\end{align}
In that case, the evolution operator $U$ associated to $\L$ is no longer a simple exponential, but it can still be expressed as a finite combination of exponentials. More precisely, if $0\leq s\leq t\leq \tau$ are such that $\tau_{m-1}\leq s\leq t\leq \tau_m$ for some $m$, then $U(t,s) = e^{(t-s)\L^{(m)}}$. Otherwise, let us introduce
\begin{align}
\label{eq:defkL}
\kL^{(m,l)} \bydef
\left\{
\begin{aligned}
& e^{(\tau_{m-1}-\tau_{m-2})\L^{(m-1)}} e^{(\tau_{m-2}-\tau_{m-3})\L^{(m-2)}} \ldots e^{(\tau_{l+1}-\tau_{l})\L^{(l+1)}} \qquad & 0\leq l\leq m-2, \\
& I , \qquad &l= m-1.
\end{aligned}
\right.
\end{align}
For $l$ and $m$ such that $\tau_{l-1} \leq s \leq \tau_l \leq \tau_{m-1} \leq t \leq \tau_{m}$,
\begin{align*}
U(t,s) = e^{(t-\tau_{m-1})\L^{(m)}} \kL^{(m,l)} e^{(\tau_l -s)\L^{(l)}}.
\end{align*}
Then, the operator $T$ given by~\eqref{eq:T_general} is still a well defined operator on $\X_\nu$ (recall Definition~\ref{def:Xnu}), and a fixed point of $T$ yields a solution of~\eqref{eq:PDE}.

In spirit, this is essentially the setup we consider to validate solutions on longer time intervals. However, in practice it will prove convenient to use instead a slightly different approach, based several copies of the $\X_\nu$ space corresponding to each subdomain of the above subdivision. We therefore consider, for each $m\in\{1,\ldots,M\}$,
\begin{align*}
\X_\nu^{(m)} \bydef \left\{ u^{(m)} = \left(u^{(m)}_n\right)_{n\in\ZZ}\in \left(C([\tau_{m-1},\tau_m],\CC)\right)^\ZZ,\ \left\Vert u^{(m)}\right\Vert_{\X_\nu^{(m)}} \bydef \sum_{n\in\ZZ} \left\Vert u^{(m)}_n\right\Vert_{C^0} \nu^{\vert n\vert} < \infty \right\},
\end{align*}
where the $C^0$ norm is on the interval where the function is defined, i.e.,
\begin{align*}
\left\Vert u^{(m)}_n\right\Vert_{C^0} \bydef \sup\limits_{t\in [\tau_{m-1},\tau_m]} \left\vert u^{(m)}_n(t)\right\vert,
\end{align*}
and the product space $\X_\nu^\pprod \bydef \ds \prod_{m=1}^M \X_\nu^{(m)}$. 
\begin{remark}
We have a norm on each space $\X_\nu^{(m)}$, but we purposely do not specify a norm on the product space $\X_\nu^\pprod$. Indeed, the main purpose of introducing this product space is that it provides us with the flexibility of taking different weights on each subintervals, which will be determined a posteriori when trying to apply the contraction mapping theorem, see Theorem~\ref{thm:NewtonKantorovich} and the discussion that precedes it.

Another, more minor, advantage of this product space, is that it allows us to take an approximate solution $\bu$ which is only piece-wise continuous but not necessarily globally continuous in time. However, we will design the fixed point operator corresponding to~\eqref{eq:PDE} in such a way that its fixed points are guaranteed to be continuous. 
\end{remark}
Henceforth, $\bu$ denotes an element of $\Pi^{\leq \Nu}\X_\nu^\pprod$, i.e., $\bu^{(m)}\in \Pi^{\leq \Nu}\X_\nu^{(m)}$ for all $m=1,\ldots,M$, which represent a piece-wise polynomial (in time) approximate solution to~\eqref{eq:PDE}. That is, we assume that each $u^{(m)}_n\in C^0([\tau_{m-1},\tau_m],\CC)$ is a polynomial, for $\vert n\vert\leq \Nu$ and $m=1,\ldots,M$.

\medskip

We are now ready to define, on each subinterval $[\tau_{m-1},\tau_m]$, a constant operator $\L^{(m)}$ approximating $DF(\bu^{(m)}(t))$, by mimicking the construction of Section~\ref{sec:L_onedomain}. That is, for each $m=1,\ldots,M$, we introduce:
\begin{itemize}
\item Finite vectors $\bv^{(m,j)} \in\Pi^{\leq \Nu}\ell^1_\nu$, $j=1,\ldots,2J-1$, such that
\begin{align*}
\bv^{(m,j)} \approx \frac{1}{\tau_m-\tau_{m-1}}\int_{\tau_{m-1}}^{\tau_m} \Pi^{\leq \Nu}\left[\left(g^{(j)}\right)'\left(\bu^{(m)}(t)\right)\right] \d t.
\end{align*}
\item An intermediate constant approximation $\tL^{(m)}$ of $DF(\bu^{(m)})$, given by
\begin{align*}
\tL^{(m)} a \bydef (-1)^{J+1} \D^{2J}a + \ds \sum_{j=0}^{2J-1} \D^{j}\Pi^{\leq \NL}\left( \bv^{(m,j)} \ast \Pi^{\leq \NL}a \right) + \sum_{j=0}^{2J-1} \bv^{(m,j)}_0 \D^{j} \Pi^{> \NL}a,
\end{align*}
for all $a\in\ell^1_\nu$.
\item A diagonal matrix $\Lambda_{\NL}^{(m)}= \text{diag}\left(\lambda_{-\NL}^{(m)},\ldots,\lambda_{\NL}^{(m)}\right)$ and an invertible matrix $Q_{\NL}^{(m)}$ such that
\begin{equation*}
\Pi^{\leq \NL} \tL^{(m)} \Pi^{\leq \NL} \approx Q_{\NL}^{(m)} \Lambda_{\NL} \left(Q_{\NL}^{(m)}\right)^{-1},
\end{equation*}
and
\begin{equation*}
\L_{\NL}^{(m)} \bydef Q_{\NL}^{(m)} \Lambda_{\NL}^{(m)} \left(Q_{\NL}^{(m)}\right)^{-1}.
\end{equation*}
\item The numbers 
\begin{align*}
\lambda_n^{(m)} \bydef -n^{2J} + \sum_{j=0}^{2J-1} (in)^j \,  \bv^{(m,j)}_0,\qquad \vert n\vert > \NL,
\end{align*}
the associated diagonal operator $\Lambda^{(m)}\bydef \diag\left(\left(\lambda^{(m)}_n\right)_{n\in\ZZ}\right)$, and the operator $Q^{(m)}\bydef Q_{\NL}^{(m)} \Pi^{\leq \NL} + \Pi^{> \NL}$.
\item The final constant approximation $\L^{(m)}$ of $DF(\bu^{(m)})$, given by
\begin{equation*}
\L^{(m)} \bydef Q^{(m)} \Lambda^{(m)} \left(Q^{(m)}\right)^{-1}.
\end{equation*}
\end{itemize}
Next, we introduce, for each $m=1,\ldots,M$, the time-independent function \begin{align*}
\gamma^{(m)}:u^{(m)} \mapsto F(u^{(m)}) - \L^{(m)}u^{(m)},
\end{align*}
and consider the operator $T^\pprod= \left(T^{(1)},\ldots,T^{(M)}\right)$ acting on $u=\left(u^{(1)},\ldots,u^{(M)}\right)\in \X_\nu^\pprod$ as
\begin{align}
\label{eq:defTprod}
&T^{(m)}(u)(t) \bydef e^{(t-\tau_{m-1})\L^{(m)}}\theta^{(m-1)}(u) + \int_{\tau_{m-1}}^t e^{(t-s)\L^{(m)}} \gamma^{(m)} \left(u^{(m)}(s)\right) \d s, 
\end{align}
for $t\in[\tau_{m-1},\tau_{m}]$ and $m=1,\ldots,M$, 
where the recursively defined
\begin{align}
\label{eq:deftheta}
\ds \theta^{(m)}(u) \bydef e^{(\tau_{m}-\tau_{m-1})\L^{(m)}}\theta^{(m-1)}(u) + \int_{\tau_{m-1}}^{\tau_m}e^{(\tau_{m}-s)\L^{(m)}} \gamma^{(m)} \left(u^{(m)}(s)\right) \d s,\quad m=1,\ldots,M-1,
\end{align}
with $\theta^{(0)}(u) \bydef \uin$, denotes the evaluation at the endpoint of the previous subdomain.
Up to the fact that the domain has been split into different pieces, this operator $T^\pprod$ is essentially the operator $T$ obtained from~\eqref{eq:T_general} by taking $\L$ piece-wise constant as in~\eqref{eq:L_piecewise}. In particular, note that if $u=\left(u^{(1)},\ldots,u^{(M)}\right)\in \X_\nu^\pprod$ is a fixed point of $T^\pprod$, then $u^{(m)}(\tau_m) = u^{(m+1)}(\tau_m)$ for all $m=1,\ldots,M-1$, therefore $u$ can also be interpreted as an element of $\X_\nu$, which is a fixed point of $T$ from~\eqref{eq:T_general} and hence a solution of~\eqref{eq:PDE}.

\medskip

We are now ready to derive $Y$, $Z$ and $W$ bounds satisfying the assumptions~\eqref{e:def_Y}-\eqref{e:def_W} of Theorem~\ref{thm:NewtonKantorovich}, for the operator $T^\pprod$, the product space $\X_\nu^\pprod$ and the approximate solution $\bu\in \X_\nu^\pprod$ introduced above. These bounds will at least partially be similar to the ones obtained in Section~\ref{sec:onedomain} when $\L$ was taken fully constant, but with an extra layer corresponding to the different subdomains.

\subsection{$Y$-bound}
\label{sec:Ybounds_DD}

In this subsection, we deal with the residual bounds $Y^{(m)}$ satisfying assumption~\eqref{e:def_Y} of Theorem~\ref{thm:NewtonKantorovich}. As in Section~\ref{sec:Ybounds}, we assume there exist $\Nin \geq \NL$ and an explicit $\epsilon^{\mathrm{in}}\geq 0$ such that $\left\Vert \Pi^{>\Nin}\uin \right\Vert_{\ell^1_\nu} \leq \epsilon^{\mathrm{in}}$.
In order to easily deal with this potentially infinite tail in the initial condition, we introduce
\begin{align*}
\begin{cases}
\ds \btheta^{(0)}(u) \bydef \Pi^{\leq\Nin}\uin,\\
\ds \btheta^{(m)}(u) \bydef e^{(\tau_{m}-\tau_{m-1})\L^{(m)}}\btheta^{(m-1)}(u) + \int_{\tau_{m-1}}^{\tau_m}e^{(\tau_{m}-s)\L^{(m)}} \gamma^{(m)} \left(u^{(m)}(s)\right) \d s,\qquad m\geq 1.
\end{cases}
\end{align*}
When applied to the approximate solution $\bu\in\Pi^{\leq \NL}\X_\nu^\pprod$, these $\btheta^{(m)}(\bu)$ are all computable, and, for all $m=1,\ldots,M$,
\begin{align*}
\theta^{(m)}(u) - \btheta^{(m)}(u) = \kL^{(m,0)}\Pi^{>\Nin}\uin.
\end{align*} 
Generalizing~\eqref{eq:mu}, we introduce for any $N\in\NN$, $m\in\{1,\ldots,M\}$ and $l\in\{0,\ldots,m-1\}$,
\begin{equation}
\label{eq:mumi}
\mu^{(m,l)}_N \bydef  \exp\left(\sup_{\vert n\vert >N} \left((\tau_m-\tau_{m-1}) \Re\left(\lambda_n^{(m)}\right)^+ + \sum_{j=l+1}^{m-1}(\tau_j-\tau_{j-1})\Re\left(\lambda_n^{(j)}\right)\right)\right),
\end{equation}
where $x^+ := \max(x,0)$, and with the convention that $\sum_{j=m}^{m-1}=0$. We then have, for all $m=1,\ldots,M$,
\begin{align*}
\left\Vert t\mapsto e^{(t-\tau_{m-1})\L^{(m)}}\kL^{(m,0)}\Pi^{>\Nin}\uin \right\Vert_{\X_\nu^{(m)}} \leq \mu_{\NL}^{(m,0)} \epsilon^{\mathrm{in}}.
\end{align*}

We then proceed exactly as in Section~\ref{sec:Ybounds}, and take, for all $m=1,\ldots,M$,
\begin{align*}
Y^{(m)} \!\bydef\! \left\Vert t\mapsto e^{(t-\tau_{m-1})\L^{(m)}} \btheta^{(m-1)}(\bu) +  \! \int_{\tau_{m-1}}^{t} \!\!\!\! e^{(t-s)\L^{(m)} } 
		\gamma^{(m)}\left(\bu^{(m)}(s)\right) \mbox{d} s - \bu^{(m)} \left( t \right) \right\Vert_{\X_\nu^{(m)}} \!\!\!\!+ \mu^{(m,0)}_{\Nin} \epsilon^{\mathrm{in}},
\end{align*}
which is computable and satisfies assumption~\eqref{e:def_Y} of Theorem~\ref{thm:NewtonKantorovich}.

\begin{remark}
Remark~\ref{rem:splitting} also applies here, and a splitting similar to~\eqref{eq:splittingnew} can be used on each subdomain, by rewriting 
\begin{align*}
&e^{(t-\tau_{m-1})\L^{(m)}} \btheta^{(m-1)}(\bu) +  \int_{\tau_{m-1}}^{t} e^{(t-s)\L^{(m)} } 
		\gamma^{(m)}\left(\bu^{(m)}(s)\right) \mbox{d} s - \bu^{(m)} \left( t \right)   \\
		&\qquad\qquad\qquad\qquad = e^{(t-\tau_{m-1})\L^{(m)}} \btheta^{(m-1)}(\bu) + (\L^{(m)})^{-1}\left(e^{(t-\tau_{m-1})\L^{(m)}}-I\right)\gamma^{(m)}(\bU^{(m)}) \\
		&\qquad\qquad \quad\qquad\qquad\qquad +  \int_{\tau_{m-1}}^{t} e^{(t-s)\L^{(m)} } 
		\left(\gamma^{(m)}\left(\bu^{(m)}(s)\right)-\gamma^{(m)}\left(\bU^{(m)}\right)\right) \mbox{d} s - \bu^{(m)} \left( t \right) ,
\end{align*}
with
$\bU^{(m)} = \frac{1}{\tau_m-\tau_{m-1}}\int_{\tau_{m-1}}^{\tau_m} \bu^{(m)}(s) \mbox{d} s$.
Note that the domain decomposition in time contributes to making this splitting more efficient, as the piece-wise constant approximation of $\bu^{(m)}$ by $\bU^{(m)}$ is going to be more accurate than a single global constant approximation over the entire interval~$[0,\tau]$.
\end{remark}

\subsection{$Z$-bound}
\label{sec:Zbounds_DD}

In this subsection, we deal with the bounds $Z^{(m)}_i$ satisfying assumption~\eqref{e:def_Z} of Theorem~\ref{thm:NewtonKantorovich}. 
To that end, let us first extend the notations introduced in Section~\ref{sec:Zbounds}.

\begin{definition}[Extension of Definition~\ref{def:notation_Z} to multiple subdomains]
\label{def:notation_Zmi}
For all $m\in\{1,\ldots,M\}$:
\begin{itemize}
\item For all $n\in\ZZ$, $\tb^{(m)}_n \bydef \dfrac{e^{(\tau_m-\tau_{m-1}) \Re(\lambda^{(m)}_n)}-1}{\Re(\lambda^{(m)}_n)}\in\RR$, with the convention that $\tb^{(m)}_n = \tau_m-\tau_{m-1}$ if $\Re(\lambda^{(m)}_n) = 0$.
\item For all $i\in\{1,\ldots,m\}$ and $\vert n\vert > \NL$,
\begin{align*}
b^{(m,i)}_n \!\bydef\! \left\{
\begin{aligned}
&\tb^{(m)}_n  && ~ i=m,\\
& \! \exp\biggl((\tau_m-\tau_{m-1}) \Re\left(\lambda_n^{(m)}\right)^+ + \sum_{j=i+1}^{m-1}(\tau_j-\tau_{j-1})\Re\left(\lambda_n^{(j)}\right)\biggr) \tb^{(i)}_n  && ~ i=1,\ldots,m-1,
\end{aligned}
\right.
\end{align*}
with the convention that $\sum_{j=m}^{m-1}=0$.
\item For all $i\in\{1,\ldots,m\}$, $B^{(m,i)}:\ell^1_\nu \to \ell^1_\nu$ is defined as follows
\begin{align*}
B^{(m,m)} \bydef \vert Q^{(m)}\vert\, \Re(\Lambda^{(m)})^{-1} \left(e^{(\tau_m-\tau_{m-1}) \Re(\Lambda^{(m)})} - I\right) ,
\end{align*} 
and, for $i\leq m-1$,
\begin{align*}
B^{(m,i)} &\bydef \vert Q^{(m)}\vert\, e^{(\tau_m-\tau_{m-1}) \Re(\Lambda^{(m)})^+} \left\vert \left(Q^{(m)}\right)^{-1} \kL^{(m,i)}  Q^{(i)} \right\vert 
 \Re(\Lambda^{(i)})^{-1} \left(e^{(\tau_i-\tau_{i-1}) \Re(\Lambda^{(i)})} - I\right) .
\end{align*}   
Note that
\begin{align*}
B^{(m,i)} = \Pi^{\leq \NL} B^{(m,i)} \Pi^{\leq \NL} + \Pi^{> \NL} B^{(m,i)} \Pi^{> \NL},
\end{align*}
where $\Pi^{> \NL} B^{(m,i)} \Pi^{> \NL}$ is diagonal and has diagonal elements given by $b^{(m,i)}_n$.
\item For all $j=0,\ldots,2J-1$, $\chi^{(m,j)}_{\NL} \bydef \sup\limits_{\vert n\vert >\NL} \vert n\vert^j\, \tilde{b}^{(m)}_n$. Note that $\chi^{(m,j)}_{\NL}$ is computable (see, e.g., \cite[Appendix D]{BerBreShe24}). 
\item For all $j=0,\ldots,2J-1$, $\beta^{(m,j)} \bydef \left\Vert\left(g^{(j)}\right)'(\bu^{(m)}) - \bar{v}_0^{(m,j)} \right\Vert_{C^0} \in \ell^1_\nu$.
\item Let $\bN\in\NN$, such that for all $\vert n\vert >\bN$, all $m=1,\ldots,M$ and all $j=0,\ldots,2J-1$, $\beta^{(m,j)}_n = 0$. Let $\Gamma^{(m)}$ be a linear operator such that, for all $a\in \ell^1_\nu$,
\begin{align*}
\Gamma^{(m)}\, \Pi^{\leq \bN + \NL} \vert a\vert \geq \left\Vert \left(Q^{(m)}\right)^{-1} D\gamma^{(m)}\left(\bu^{(m)}\right) \right\Vert_{C^0} \Pi^{\leq \bN + \NL} \vert a\vert,
\end{align*}
and
\begin{align*}
\Gamma^{(m)}\, \Pi^{> \bN + \NL}a &\bydef  \sum_{j=0}^{2J-1} \vert \D^j\vert \left( \beta^{(m,j)} \ast \Pi^{> \bN +\NL} a \right).
\end{align*}
\end{itemize}
\end{definition}
\begin{remark}
The operators $B^{(m,i)}$, which will prove critical in getting $Z$ bounds, depend on $\kL^{(m,i)}$. According to~\eqref{eq:defkL}, for $i\leq m-2$, $\kL^{(m,i)}$ is given by
\begin{align*}
 Q^{(m-1)}e^{(\tau_{m-1}-\tau_{m-2})\Lambda^{(m-1)}} \left(Q^{(m-1)}\right)^{-1}Q^{(m-2)}e^{(\tau_{m-2}-\tau_{m-3})\Lambda^{(m-2)}} \cdots e^{(\tau_{l+1}-\tau_{l})\Lambda^{(l+1)}}Q^{(l+1)}.
\end{align*}
In practice, especially when computing $\kL^{(m,i)}$ with interval arithmetic, it proves beneficial to first compute the $  
\left(Q^{(j)}\right)^{-1}Q^{(j-1)}$ products, because these should be somewhat close to the identity. 
\end{remark}

We are now ready to obtain $Z$ bounds, i.e., computable quantities $Z^{(m)}_i$ such that, for all $i,m\in\{1,\ldots,M\}$,
\begin{align*}
\left\Vert D_i T^{(m)}(\bu)\right\Vert_{B(\X_\nu^{(i)},\X_\nu^{(m)})} \leq Z^{(m)}_i.
\end{align*}
First note that $T^{\pprod}$ is lower triangular, i.e., $D_i T^{(m)}(\bu)=0$ for $i>m$, in which cases we simply take $Z^{(m)}_i=0$. When $i\leq m$, we proceed as in Section~\ref{sec:Zbounds}, and first reduce $\left\Vert D_i T^{(m)}(\bu)\right\Vert_{B(\X_\nu^{(i)},\X_\nu^{(m)})}$ to controling an $\ell^1_\nu$ operator norm.

\begin{lemma}
With the notations of Definition~\ref{def:notation_Zmi}, for all $1\leq i\leq m\leq M$,
\begin{align*}
\left\Vert D_iT^{(m)}(\bu) \right\Vert_{B(\X_\nu^{(i)},\X_\nu^{(m)})} \leq \left\Vert B^{(m,i)}\Gamma^{(i)} \right\Vert_{B(\ell^1_\nu,\ell^1_\nu)}.
\end{align*}
\end{lemma}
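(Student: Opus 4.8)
The plan is to mimic the proof of Lemma~\ref{lem:fromZtoell1} from the single-domain case, carrying the extra bookkeeping needed to track how the evolution operator propagates an increment from subinterval $i$ through the intermediate subintervals up to subinterval $m$. First I would fix $h^{(i)}\in\X_\nu^{(i)}$ and compute $\big[D_iT^{(m)}(\bu)h^{(i)}\big](t)$ explicitly from the definitions~\eqref{eq:defTprod} and~\eqref{eq:deftheta}. When $i=m$ the only contribution comes from differentiating the integral term, giving $\int_{\tau_{m-1}}^t e^{(t-s)\L^{(m)}} D\gamma^{(m)}(\bu^{(m)}(s))h^{(m)}(s)\,\d s$, exactly as in Section~\ref{sec:Zbounds}. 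When $i<m$, the dependence on $h^{(i)}$ enters only through $\theta^{(m-1)}(u)$, recursively through $\theta^{(i)}(u)$; since $\theta^{(i)}$ depends on $u^{(i)}$ via the same integral term, the chain rule yields
\begin{align*}
\big[D_iT^{(m)}(\bu)h^{(i)}\big](t) = e^{(t-\tau_{m-1})\L^{(m)}} \kL^{(m,i)} \int_{\tau_{i-1}}^{\tau_i} e^{(\tau_i-s)\L^{(i)}} D\gamma^{(i)}(\bu^{(i)}(s))h^{(i)}(s)\,\d s,
\end{align*}
using that $D_i\theta^{(j)} = e^{(\tau_j-\tau_{j-1})\L^{(j)}}D_i\theta^{(j-1)}$ for $j>i$, which telescopes into the product $\kL^{(m,i)}$ of~\eqref{eq:defkL}.

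Next I would factor each exponential through the (near-)diagonalization $\L^{(m)}=Q^{(m)}\Lambda^{(m)}(Q^{(m)})^{-1}$, take the $C^0$-norm in time component-wise, and bound $|e^{(t-\tau_{m-1})\Re(\Lambda^{(m)})}|\le e^{(\tau_m-\tau_{m-1})\Re(\Lambda^{(m)})^+}$ on the outer factor (since $t\le\tau_m$), while keeping the genuine exponential $e^{(\tau_i-s)\Re(\Lambda^{(i)})}$ under the integral so that $\int_{\tau_{i-1}}^{\tau_i}e^{(\tau_i-s)\Re(\Lambda^{(i)})}\,\d s = \Re(\Lambda^{(i)})^{-1}(e^{(\tau_i-\tau_{i-1})\Re(\Lambda^{(i)})}-I)$. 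Inserting $|Q^{(m)}|$, $|(Q^{(m)})^{-1}\kL^{(m,i)}Q^{(i)}|$ and $|(Q^{(i)})^{-1}D\gamma^{(i)}(\bu^{(i)})|$ at the appropriate places reconstructs precisely the operator $B^{(m,i)}$ of Definition~\ref{def:notation_Zmi}, and one is left with
\begin{align*}
\big\Vert D_iT^{(m)}(\bu)h^{(i)}\big\Vert_{C^0} \le B^{(m,i)} \big\Vert (Q^{(i)})^{-1}D\gamma^{(i)}(\bu^{(i)})\big\Vert_{C^0} \big\Vert h^{(i)}\big\Vert_{C^0}.
\end{align*}
Then I would invoke the dominating property $\Gamma^{(i)}\|h^{(i)}\|_{C^0}\ge \|(Q^{(i)})^{-1}D\gamma^{(i)}(\bu^{(i)})\|_{C^0}\|h^{(i)}\|_{C^0}$, which for the low modes $|n|\le\bN+\NL$ holds by construction of $\Gamma^{(i)}$ and for the high modes follows from the identity $D\gamma^{(i)}(\bu^{(i)})\Pi^{>\bN+\NL} = \sum_j\D^j\big([(g^{(j)})'(\bu^{(i)})-\bar v_0^{(i,j)}]\ast\Pi^{>\bN+\NL}\cdot\big)$ together with $|(Q^{(i)})^{-1}|\Pi^{>\NL}=\Pi^{>\NL}$, exactly as in the single-domain argument. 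This yields $\|D_iT^{(m)}(\bu)h^{(i)}\|_{C^0}\le B^{(m,i)}\Gamma^{(i)}\|h^{(i)}\|_{C^0}$, and applying $\|\cdot\|_{\ell^1_\nu}$ and using $\|\cdot\|_{\X_\nu^{(k)}}=\|\|\cdot\|_{C^0}\|_{\ell^1_\nu}$ gives the claimed bound.

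The main obstacle is getting the telescoping of the chain rule right and verifying that the accumulated product of exponentials and change-of-basis matrices coincides with the operator $B^{(m,i)}$ as defined, in particular being careful about which exponentials retain their oscillatory/decaying character (bounded by $\Re(\Lambda)^+$) versus which must be integrated exactly to produce the $\Re(\Lambda^{(i)})^{-1}(e^{\cdots}-I)$ factor; a sign or an index-range slip there ($\sum_{j=i+1}^{m-1}$ versus $\sum_{j=i}^{m-1}$, or $\tau_m$ versus $t$ in the outer factor) would propagate into an incorrect constant. Everything else is a routine adaptation of Lemma~\ref{lem:fromZtoell1}, and the cases $i=m$ and $i<m$ can be handled uniformly by reading $\kL^{(m,m)}=I$ and the empty sum convention.
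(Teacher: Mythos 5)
Your proposal is correct and follows exactly the route of the paper's proof: compute $D_iT^{(m)}(\bu)$ via the chain rule and the recursion for $\theta^{(\cdot)}$ (telescoping the factors $e^{(\tau_j-\tau_{j-1})\L^{(j)}}$ into $\kL^{(m,i)}$), take component-wise $C^0$-norms using the diagonalizations to obtain $B^{(m,i)}$, dominate $\|(Q^{(i)})^{-1}D\gamma^{(i)}(\bu^{(i)})\|_{C^0}$ by $\Gamma^{(i)}$ as in Lemma~\ref{lem:fromZtoell1}, and pass to $\ell^1_\nu$. The paper simply compresses the second half into ``proceeding exactly as in the $M=1$ case''; you have spelled out the same details, and your identification of the two delicate points (the telescoping index range and keeping the oscillatory outer factor separate from the integrated $\Re(\Lambda^{(i)})^{-1}(e^{\cdots}-I)$ factor) matches the structure of $B^{(m,i)}$ as defined.
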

\begin{proof}
The proof if very similar to that of Lemma~\ref{lem:fromZtoell1}. We consider an arbitrary $h\in\X_\nu^{(i)}$ and estimate
\begin{align}
\label{eq:DmTm}
\left( D_m T^{(m)}(\bu)h \right)(t) &= \int_{\tau_{m-1}}^{t} e^{ (t-s)\L^{(m)} }  D\gamma^{(m)}(\bu^{(m)}(s)) (h(s)) \mbox{d} s, \qquad \text{if }i=m,
\end{align}
and 
\begin{align}
\label{eq:DiTm}
\left( D_i T^{(m)}(\bu)h \right)(t) &= e^{ (t-\tau_{m-1})\L^{(m)}}D_i\theta^{(m-1)}(\bu)h \nonumber\\
& = e^{ (t-\tau_{m-1})\L^{(m)}}\kL^{(m,i)}\int_{\tau_{i-1}}^{\tau_i} e^{(\tau_i-s)\L^{(i)} }  D\gamma^{(i)}(\bu^{(i)}(s)) (h(s)) \mbox{d} s, \qquad \text{if }i<m.
\end{align}
Taking the $C^0$-norm in time and proceeding exactly as in the $M=1$ case yields
\begin{align*}
\left\Vert D_iT^{(m)}(\bu)h \right\Vert_{C^0} &\leq B^{(m,i)}  \Gamma^{(i)} \left\Vert h\right\Vert_{C^0},
\end{align*}
and therefore
\begin{align*}
\left\Vert D_iT^{(m)}(\bu)h \right\Vert_{\X_\nu^{(m)}} &\leq \left\Vert B^{(m,i)} \Gamma^{(i)}\left\Vert h\right\Vert_{C^0} \right\Vert_{\ell^1_\nu} 
\leq \left\Vert B^{(m,i)} \Gamma^{(i)} \right\Vert_{B(\ell^1_\nu,\ell^1_\nu)} \left\Vert h \right\Vert_{\X_\nu^{(i)}}. \qedhere
\end{align*}
\end{proof}
We now provide a computable upper-bound for $\left\Vert B^{(m,i)} \Gamma^{(i)} \right\Vert_{B(\ell^1_\nu,\ell^1_\nu)}$.
\begin{proposition}
For all $1\leq i\leq m\leq M$, we have 
\begin{align*}
\left\Vert B^{(m,i)} \Gamma^{(i)} \right\Vert_{B(\ell^1_\nu,\ell^1_\nu)} \leq \max \left(\left\Vert B^{(m,i)} \Gamma^{(i)} \,\Pi^{\leq \bN+\NL}\right\Vert_{B(\ell^1_\nu,\ell^1_\nu)},\ \mu^{(m,i)}_{\NL}\sum_{j=0}^{2J-1} \left\Vert \beta^{(i,j)}\right\Vert_{\ell^1_\nu}  \chi^{(i,j)}_{\NL} \right),
\end{align*}
where $\mu^{(m,i)}_{\NL}$ is as in~\eqref{eq:mumi} if $i<m$, and $\mu^{(m,m)}_{\NL} \bydef 1$.
\end{proposition}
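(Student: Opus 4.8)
The plan is to follow, almost verbatim, the proof of Proposition~\ref{prop:ell1forZ}, simply carrying the extra subdomain indices $m$ and $i$ through. First I would use the column-wise characterization of the $\ell^1_\nu$ operator norm to write
\begin{align*}
\left\Vert B^{(m,i)} \Gamma^{(i)} \right\Vert_{B(\ell^1_\nu,\ell^1_\nu)} = \max\left( \left\Vert B^{(m,i)} \Gamma^{(i)} \, \Pi^{\leq \bN+\NL} \right\Vert_{B(\ell^1_\nu,\ell^1_\nu)},\ \left\Vert B^{(m,i)} \Gamma^{(i)} \, \Pi^{> \bN+\NL} \right\Vert_{B(\ell^1_\nu,\ell^1_\nu)}\right),
\end{align*}
so that, exactly as before, the finite-column part can be left as is (each of its columns is supported on finitely many indices, since $\beta^{(i,j)}$ has finite support and $B^{(m,i)}$ is the sum of a finite block and a diagonal operator, hence this term is genuinely computable), and only the tail part requires a closed-form estimate.

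For the tail part I would use the defining properties collected in Definition~\ref{def:notation_Zmi}: first $\Gamma^{(i)}\Pi^{>\bN+\NL} = \Pi^{>\NL}\Gamma^{(i)}\Pi^{>\bN+\NL}$ together with $\Gamma^{(i)}\Pi^{>\bN+\NL}a = \sum_{j=0}^{2J-1}\vert\D^j\vert\bigl(\beta^{(i,j)}\ast\Pi^{>\bN+\NL}a\bigr)$; and second the fact that $\Pi^{>\NL}B^{(m,i)}\Pi^{>\NL}$ is diagonal with entries $b^{(m,i)}_n$. Composing, $\Pi^{>\NL}B^{(m,i)}\Pi^{>\NL}\vert\D^j\vert$ is again diagonal with entries $b^{(m,i)}_n\vert n\vert^j$ for $\vert n\vert>\NL$, so its $\ell^1_\nu$ operator norm is $\sup_{\vert n\vert>\NL}b^{(m,i)}_n\vert n\vert^j$ (all the $b^{(m,i)}_n$ are nonnegative). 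Combining this with the Banach algebra estimate of Lemma~\ref{lem:BanachAlgebra} and the fact that $\Pi^{>\bN+\NL}$ is a contraction on $\ell^1_\nu$ yields
\begin{align*}
\left\Vert B^{(m,i)} \Gamma^{(i)} \, \Pi^{> \bN+\NL} \right\Vert_{B(\ell^1_\nu,\ell^1_\nu)} \leq \sum_{j=0}^{2J-1}\Bigl(\sup_{\vert n\vert>\NL}b^{(m,i)}_n\vert n\vert^j\Bigr)\left\Vert \beta^{(i,j)}\right\Vert_{\ell^1_\nu}.
\end{align*}

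It then remains to bound $\sup_{\vert n\vert>\NL}b^{(m,i)}_n\vert n\vert^j$ by $\mu^{(m,i)}_{\NL}\chi^{(i,j)}_{\NL}$, splitting according to the two cases in the definition of $b^{(m,i)}_n$. If $i=m$, then $b^{(m,m)}_n=\tb^{(m)}_n$ and $\mu^{(m,m)}_{\NL}=1$, so the supremum is exactly $\chi^{(m,j)}_{\NL}$ by definition. If $i<m$, then $b^{(m,i)}_n$ factors as the product of the exponential prefactor appearing in Definition~\ref{def:notation_Zmi} and $\tb^{(i)}_n$; since both factors are nonnegative, the supremum of the product over $\vert n\vert>\NL$ is bounded by the product of the suprema, the second of which is $\chi^{(i,j)}_{\NL}$, and the first of which equals $\mu^{(m,i)}_{\NL}$ as defined in~\eqref{eq:mumi}. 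The only point requiring a little care here is matching the suprema: in $\mu^{(m,i)}_{\NL}$ the supremum over $n$ sits \emph{inside} the exponential of the whole exponent, whereas in $b^{(m,i)}_n$ the exponential is taken pointwise, so one invokes monotonicity of $\exp$ to pass from $\sup_n\exp(\cdot)$ to $\exp(\sup_n\cdot)$; this is immediate and is really the whole ``difficulty''. Plugging the resulting bound back into the previous display and combining with the column-splitting identity gives the stated inequality.
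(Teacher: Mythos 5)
Your proposal is correct and follows essentially the same route as the paper's own proof: split the operator norm into the finite-column block and the tail block, use the diagonal structure of $\Pi^{>\NL}B^{(m,i)}\Pi^{>\NL}$ together with the convolution/Banach-algebra estimate to reduce the tail block to the sum $\sum_j (\sup_{|n|>\NL} b^{(m,i)}_n |n|^j)\, \|\beta^{(i,j)}\|_{\ell^1_\nu}$, and then bound $b^{(m,i)}_n \le \mu^{(m,i)}_{\NL}\, \tb^{(i)}_n$ pointwise for $|n|>\NL$ using the monotonicity of $\exp$. The only difference is that you spell out a couple of points (nonnegativity of the factors, supremum of a product bounded by product of suprema, $\sup_n\exp = \exp\sup_n$) that the paper leaves implicit, which is fine and does not change the argument.
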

\begin{proof}
Again, the proof is very similar to that of Proposition~\ref{prop:ell1forZ}. In particular, we only need to estimate by hand
\begin{align*}
\left\Vert B^{(m,i)}\Gamma^{(i)} \right\Vert_{B(\ell^1_\nu,\ell^1_\nu)} &\leq \sum_{j=0}^{2J-1} \left\Vert \Pi^{> \NL} B^{(m,i)} \vert \D^{j}\vert \right\Vert_{B(\ell^1_\nu,\ell^1_\nu)} \left\Vert \beta^{(i,j)}  \right\Vert_{\ell^1_\nu},
\end{align*}
where
\begin{align*}
\left\Vert \Pi^{> \NL} B^{(m,i)} \vert \D^{j}\vert \right\Vert_{B(\ell^1_\nu,\ell^1_\nu)} &\leq \sup_{\vert n\vert >\NL} b^{m,i}_n\, \vert n\vert^j \\
&\leq \sup_{\vert n\vert >\NL} \mu^{(m,i)}_{\NL}\, \tilde{b}^{(i)}_n\, \vert n\vert^j \\
&\leq  \mu^{(m,i)}_{\NL}\, \chi^{(i,j)}_{\NL}. \qedhere
\end{align*}
\end{proof}
We therefore take
\begin{align*}
Z^{(m)}_i \bydef \max \left(\left\Vert B^{(m,i)} \Gamma^{(i)} \,\Pi^{\leq \bN+\NL}\right\Vert_{B(\ell^1_\nu,\ell^1_\nu)},\ \mu^{(m,i)}_{\NL}\sum_{j=0}^{2J-1} \left\Vert \beta^{(i,j)}\right\Vert_{\ell^1_\nu}  \chi^{(i,j)}_{\NL} \right),
\end{align*}
which is computable and satisfies assumption~\eqref{e:def_Z} of Theorem~\ref{thm:NewtonKantorovich}.

\subsection{$W$-bound}
\label{sec:Wbounds_DD}

In this subsection, we finally derive computable bounds $W^{(m)}_{i,j}$ satisfying assumption~\eqref{e:def_W} of Theorem~\ref{thm:NewtonKantorovich}. For any $u\in\X^\pprod_\nu$ and $r=(r^{(1)},\ldots,r^{(m)}) \in \RR^M_{>0}$, we define 
\begin{align*}
\bbox(u,r) = \prod_{m=1}^M \B_{\X_\nu^{(m)}}\left(u^{(m)},r^{(m)}\right).
\end{align*}
According to Theorem~\ref{thm:NewtonKantorovich}, we have to estimate 
\begin{align*}
\left\Vert D_i T^{(m)}(u)- D_i T^{(m)}(\bu)\right\Vert_{B(\X_\nu^{(i)},\X_\nu^{(m)})},
\end{align*}
for all $i,m=1,\ldots,M$ and $u\in \bbox(\bu,\rstar)$. We are thefore going to estimate, for an arbitrary  $h\in\X_\nu^{(i)}$, 
\begin{align*}
\left\Vert \left(D_i T^{(m)}(u)- D_i T^{(m)}(\bu)\right) h\right\Vert_{\X_\nu^{(m)}}.
\end{align*}

Using again the lower triangular structure of $T^\pprod$, we have $D_i T^{(m)}(u)= D_i T^{(m)}(\bu)=0$ for $i>m$. Otherwise, starting from the first derivative computations made in~\eqref{eq:DmTm}-\eqref{eq:DiTm}, denoting $v=u-\bu$, and proceeding as in~\eqref{eq:D2T}, we get
\begin{align*}
&\left( \left(D_m T^{(m)}(u)-D_m T^{(m)}(\bu)\right)h \right)(t)\\
& =  \int_{\tau_{m-1}}^{t} e^{ (\tau_m-s)\L^{(m)} } \sum_{j=0}^{2J-1}\D^j
	 \int_{0}^{1}  \left(g^{(j)}\right)'' \! \left( \bu^{(m)} \left(s\right) + \tilde{s} v^{(m)}\left(s\right)\right) \ast v^{(m)}(s) \ast h(s)  \ \mbox{d} \tilde{s} \ \mbox{d}s,
\end{align*}
and similarly for $i<m$,
\begin{align*}
&\left( \left(D_i T^{(m)}(u)-D_i T^{(m)}(\bu)\right)h \right)(t) \\
& = e^{ (t-\tau_{m-1})\L^{(m)}}\kL^{(m,i)} \! \int_{\tau_{i-1}}^{\tau_i} \! e^{ (\tau_i-s)\L^{(i)} } \sum_{j=0}^{2J-1}\D^j
	\! \int_{0}^{1} \! \left(g^{(j)}\right)'' \! \left( \bu^{(i)} \left(s\right) + \tilde{s}  v^{(i)}\left(s\right)\right) \ast v^{(i)}(s) \ast h(s)  \ \mbox{d} \tilde{s} \ \mbox{d}s.
\end{align*}
Taking the $C^0$ norm component-wise we get, for all $i\leq m$,
\begin{align*}
&\left\Vert \left(D_iT^{(m)}(u) - D_iT^{(m)}(\bu)\right)h \right\Vert_{C^0} \\
&\qquad \qquad \leq B^{(m,i)}\sum_{j=0}^{2J-1} \vert \D^j\vert  \left( \left \vert \left( g^{(j)} \right)'' \right \vert \left( \Vert \bu^{(i)} \Vert_{C^{0}} + \Vert v^{(i)} \Vert_{C^{0}} \right)  \ast \Vert v^{(i)} \Vert_{C^{0}} \ast \Vert h \Vert_{C^{0}}	\right).
\end{align*}
We then apply the $\ell^1_\nu$-norm, and estimate, using Lemma~\ref{lem:abs_pol},
\begin{align*}
&\left\Vert \left(D_iT^{(m)}(u) - D_iT^{(m)}(\bu)\right)h \right\Vert_{\X^{(m)}_\nu} \nonumber\\
& = \left\Vert\left\Vert \left(D_iT^{(m)}(u) - D_iT^{(m)}(\bu)\right)h \right\Vert_{C^0}\right\Vert_{\ell^1_\nu} \\
&\leq  \sum_{j=0}^{2J-1} \left\Vert B^{(m,i)}\vert \D^j\vert\right\Vert_{B(\ell^1_\nu,\ell^1_\nu)}   \left \vert \left( g^{(j)} \right)'' \right \vert \left( \Vert\Vert \bu^{(i)} \Vert_{C^{0}}\Vert_{\ell^1_\nu} +\Vert\Vert v^{(i)} \Vert_{C^{0}}\Vert_{\ell^1_\nu} \right)  \Vert\Vert v^{(i)} \Vert_{C^{0}}\Vert_{\ell^1_\nu} \left\Vert\left\Vert h \right\Vert_{C^{0}}\right\Vert_{\ell^1_\nu} \\
&\leq  \sum_{j=0}^{2J-1} \left\Vert B^{(m,i)}\vert \D^j\vert\right\Vert_{B(\ell^1_\nu,\ell^1_\nu)}   \left \vert \left( g^{(j)} \right)'' \right \vert \left( \Vert \bu^{(i)} \Vert_{\X^{(i)}_\nu} + \rstar^{(i)} \right) \Vert v^{(i)} \Vert_{\X^{(i)}_\nu} \Vert h \Vert_{\X^{(i)}_\nu}.
\end{align*}
Notice that $\left\Vert B^{(m,i)}\vert \D^j\vert\right\Vert_{\ell^1_\nu}$ can be computed explicitly, as 
\begin{align*}
\left\Vert B^{(m,i)}\vert \D^j\vert\right\Vert_{\ell^1_\nu} = \max\left(\left\Vert \Pi^{\leq \NL} B^{(m,i)}\vert \D^j\vert\Pi^{\leq \NL}\right\Vert_{B(\ell^1_\nu,\ell^1_\nu)} ,\, \chi^{(i,j)}_{N} \mu^{(m,i)}_{N}\right).
\end{align*}
Therefore
\begin{align*}
W^{(m)}_{i,k} \bydef 
\begin{cases}
\ds \sum_{j=0}^{2J-1} \left\Vert B^{(m,i)}\vert \D^j\vert\right\Vert_{B(\ell^1_\nu,\ell^1_\nu)}    \left \vert \left( g^{(j)} \right)'' \right \vert \left( \Vert \bu^{(i)} \Vert_{\X^{(i)}_\nu} + \rstar^{(i)} \right) \qquad \text{if }i\leq m \text{ and } k=i\\
0 \qquad \text{otherwise},
\end{cases}
\end{align*}
is computable and satisfies assumption~\eqref{e:def_W} of Theorem~\ref{thm:NewtonKantorovich}.

\subsection{Practical considerations and choices of parameters}
\label{sec:parameters}

The estimates $Y$, $Z$ and $W$ derived in the previous subsections depend on several parameters, and choosing these parameters carefully can be instrumental in getting sharp enough estimates so that~\eqref{e:inequalities1}-\eqref{e:inequalities2} holds. We discuss below the influence of the main parameters, some of which can be adapted automatically by the code. We emphasize that none of the steps and arguments described here have to be fully rigorous, as the goal here is only to guess appropriate parameters values, that will hopefully lead to a successful proof (and possibly to an error bound below some threshold), while minimizing the computation cost of the proof.

\paragraph{The choice of the subdivision.} It is well known for ODEs and some classes of PDEs that using adaptive step-sizes leads to much more efficient algorithms for numerically approximating solutions. In our context the same is true, but because our focus is on rigorously validating solutions, the adaptation of the time-step will be done differently. That is, the subdivision 
\begin{align*}
0 = \tau_0 < \tau_1 < \ldots < \tau_M = \tau,
\end{align*}
will not be chosen so as to minimize the error, but instead as a mean of making the $Z$ estimate smaller, which is usually the bottleneck of rigorous validation via Theorem~\ref{thm:NewtonKantorovich}.

Analyzing precisely how the subdivision influences all the $Z_i^{(m)}$ bounds is tricky, but what can easily be done is to study the diagonal terms $Z_m^{(m)}$. In order to explain the special relevance of these diagonal entries, let us look at condition~\eqref{e:inequalities2}. Taking $W=0$ for simplicity (in practice, the impact of $W$ is moderate as soon as the $r^j$ are small), there exists an $\eta \in \RR_{>0}^M$ such that~\eqref{e:inequalities2} holds if and only if the spectral radius of the matrix $Z$ is strictly less than one. Recalling that $Z$ is lower triangular for our problem, we see that making the diagonal entries of $Z$ smaller than one is necessary for satisfying~\eqref{e:inequalities2}. In practice, what we therefore do is to let the user pick a final time $\tau$ and a number $M$ of subdomains (depending on computational resources available and on how long one is willing to wait for the proof), and then the code automatically tries to select a subdivision for which  $Z_m^{(m)}$ is roughly uniform over $m$, so as to approximately minimize the spectral radius of~$Z$. Of course this minimization is not done rigorously, and we are free to use some heuristics to speed up the calculations of approximate $Z_m^{(m)}$ used for determining the subdomains. 

In some parts of the time domain the bounds may depend quite sensitively on the positions of the grid points, especially where the diagonalization of the linear operator is badly conditioned, see Remark~\ref{rem:diagonalization}. We improved the robusteness of the grid optimization code by not just selecting shorter subdomains in those parts of the time domain (which is a direct consequence of the uniformization of the $Z_m^{(m)}$ bound over all $m$), but also being pragmatic and aiming for smaller values of $Z_m^{(m)}$ there. We thus accept the resulting reduction in optimality of the grid. The goal is for the algorithm to produce a good grid for an efficient proof, not the optimal one.

An example of subdivision produced by this procedure, which was used to prove Theorem~\ref{th:OK}, is shown in Section~\ref{sec:OK}, Figure~\ref{fig:OKdomainsandK}.

\paragraph{Adapting the polynomial order in time to reduce the error bound.} As already mentioned, we do not select the subdivision so as to try to make the error bounds $r^j$ small. However, getting small errors and errors bounds is of course desirable, so the $Y$ bound is the next thing one can optimize for. Recall that the $Y$ bound controls how small the residual error $T(\bu)-\bu$ is. The subdivision is already fixed, but we can still influence the residual error by adapting the order, henceforth denoted~$K^{(m)}$, of the polynomial approximation in time used on each subdomain to compute the approximate solution $\bu^{(m)}$. Moreover, even if each $K^{(m)}$ has been chosen so as to obtain a very accurate approximate solution $\bu$, the $Y$ bound is only an upper-bound of the residual error, obtained using the procedure described in Section~\ref{sec:interp}. In order for this error bound to be relatively sharp we need accurate interpolation (estimates), hence a large enough $\tilde{K}^{(m)}$ should be used on each subdomain to bound $C^0$ norms as in~\eqref{eq:C0interp}. In practice, the user can provide a threshold for the $Y$ bound, and the code will then automatically try to select $K^{(m)}$ and $\tilde{K}^{(m)}$ that are as small as possible while having each $Y^{(m)}$ below that threshold.

\paragraph{Other important parameters.}
The last important parameters one may want to optimize for are truncation levels in Fourier space: both the number $N_u$ of Fourier modes used to construct the approximate solution $\bu$, and the number $\NL$ of modes kept in the finite part of $\L$. In principle, those could also depend on the subdomain $m$, but in practice we did not find a simple and easy-to-implement way of optimizing for those, so we decided to keep a single value of $N_u$ and $\NL$ over all subdomains, and to let the user select them. We emphasize that these two parameters have rather different impacts on the proof. The truncation level $N_u$ dictates how accurate our approximate solution is going to be, and therefore mainly impacts the $Y$ bound. In particular, note that in order to have a really small residual error, one must approximate the solution well enough in time as described just above, but one must also select $N_u$ large enough. On the other hand, $\NL$ controls how well each operator $\L^{(m)}$ approximates $DF(\bu^{(m)})$, and therefore impacts the $Z$ bound.


\section{Integrating all the way to infinity}
\label{sec:infinity}

In this section, we consider the specific problem of integrating an orbit that converges toward a stable steady state solution. 
More precisely, we focus on hyperbolic stable steady states, and show that our rigorous integrator is sufficiently robust to potentially integrate all the way to $t=+\infty$. Moreover, we show that, if this integration step to $t=+\infty$ is successful, then we automatically get in one go the existence of a stable steady state, together with an explicit neighborhood in which all solutions converge towards the steady state, and a spectral gap for the linearized operator.

In Section~\ref{sec:infinity_onedomain}, we show how the setup of Section~\ref{sec:onedomain} can be adjusted, with minor modifications, in order to deal with the unbounded time-interval $[\tau,\infty)$. In Section~\ref{sec:infinity_bounds}, we adjust accordingly the corresponding $Y$, $Z$ and $W$ bounds allowing to conduct a rigorous integration step on that unbounded time interval. In Section~\ref{sec:steadystate}, we then prove that this rigorous integration on $[\tau,\infty)$ automatically yields the existence of a steady state, its local asymptotic stability, and an explicit spectral gap. Finally, we briefly explain in Section~\ref{sec:infinity_DD} how these considerations on a single domain can be coupled with the time-subdivision introduced in Section~\ref{sec:domaindecomposition}.

\subsection{Setup for a single step}
\label{sec:infinity_onedomain}

We consider again the PDE~\eqref{eq:PDE}, but this time for $t\in[\tau,\infty)$, and with an initial condition at $t=\tau$: $u(\tau,x) = \uin(x)$. We are going to consider a time-independent approximate solution $\bu$ of the PDE on the interval $t\in[\tau,\infty)$, and try to prove that there exists an exact solution nearby. As we will see later on, this can only work if there is a stable steady state near $\bu$ (and if $\uin$ is close to $\bu$), but we emphasize that we do not need to know this a priori. Instead, it will be a consequence of our rigorous integration.

Our approach is the same as in Section~\ref{sec:onedomain}, with simplifications coming from that fact that the approximate solution $\bu$ is already time-independent. In particular, we define $\L$ exactly as in Section~\ref{sec:L_onedomain}, with $\bv^{(j)} = \Pi^{\leq \Nu} \left[\left(g^{(j)}\right)'(\bu)\right]$. In this section, we assume that all the eigenvalues of $\L$ have negative real part, which should be the case in practice if $\bu$ is indeed close to a hyperbolic stable steady state.

Since the time domain is now $[\tau,\infty)$, from now on
\begin{align*}
\X_\nu \bydef \left\{ u = \left(u_n\right)_{n\in\ZZ}\in \left(C([\tau,\infty),\CC)\right)^\ZZ,\ \left\Vert u\right\Vert_{\X_\nu} \bydef \sum_{n\in\ZZ} \left\Vert u_n\right\Vert_{C^0} \nu^{\vert n\vert} < \infty \right\},
\end{align*}
where the only difference with Definition~\ref{def:Xnu} is the time interval, and for any continuous function $v:[\tau,\infty) \to \CC$,
\begin{align*}
\left\Vert v\right\Vert_{C^0} \bydef \sup_{t\in [\tau,\infty)} \left\vert v(t)\right\vert.
\end{align*}
The fixed point operator $T$ is now given by
\begin{align}
\label{eq:T_L_infinity}
T(u)(t) = e^{(t-\tau)\L} \uin + \int_\tau^t e^{(t-s)\L} \gamma(u(s)) \d s,\qquad t\in[\tau,\infty),
\end{align}
which also only differs from~\eqref{eq:T_L} via the time domain. Note that the assumption that $\L$ only has eigenvalues with negative real part ensures that $T$ maps $\X_\nu$ into itself.

\subsection{Bounds}
\label{sec:infinity_bounds}

We now derive suitable bounds $Y$, $Z$ and $W$ for the operator $T$ defined in~\eqref{eq:T_L_infinity}. Compared to Section~\ref{sec:onedomain}, the only estimate which requires some small adaptations is the $Y$ bound. Indeed, in the $Y$ bound we are required to estimate $C^0$-norms in time (see Section~\ref{sec:Ybounds}), which, on a bounded time-interval, we accomplished using interpolation polynomials and interpolation error estimates as discussed in Section~\ref{sec:interp}. Now that the time-interval is infinite, we can no longer use such tools and instead have to bound these $C^0$-norms by hand. On the other hand, in the $Z$ and $W$ bounds all the involved quantities are already constant, and we only have to make a couple of trivial adaptions, by generalizing in a natural way quantities that were before only defined for bounded time-intervals.

Let us start with the $Y$ bound. Using that $\bu$ does not depend on time, we get, for all $t\in[\tau,\infty)$,
\begin{align}
\label{eq:Tu-u_cste}
\left(T(\bu)-\bu\right)(t) &= e^{(t-\tau)\L}\uin + \left(e^{(t-\tau)\L}-I\right)\L^{-1}\gamma(\bu) - \bu \nonumber\\
&= e^{(t-\tau)\L}\left(\uin+\L^{-1}\gamma(\bu)\right) - \left(\bu + \L^{-1}\gamma(\bu)\right).
\end{align}
Recall that $\L$ and $\gamma$ are defined such that the PDE~\eqref{eq:PDE} writes $\partial_t u = \L u + \gamma(u)$. Since we expect $\bu$ to be close to a steady state, we should have $\L\bu + \gamma(\bu) \approx 0$, and therefore $\bu + \L^{-1}\gamma(\bu)$ should be small (and so should $\uin + \L^{-1}\gamma(\bu)$, provided $\uin\approx \bu$). Using that $\L = Q\Lambda Q^{-1}$, with $\Lambda$ a diagonal matrix whose diagonal elements all have negative real part, we get the estimate
\begin{align}
\label{eq:Yinfty}
\left\Vert T(\bu)-\bu\right\Vert_{\X_\nu} \leq \left\Vert \vert Q\vert \left\vert Q^{-1}(\uin+\L^{-1}\gamma(\bu))\right\vert + \left\vert \bu + \L^{-1}\gamma(\bu)\right\vert \right\Vert_{\ell^1_\nu} \bydef Y,
\end{align}
which gives us a computable $Y$ bound satisfying~\eqref{e:def_Y_M1}.

In order to obtain a $Z$ bound, we can replicate the estimates used in Section~\ref{sec:Zbounds}, with minor adaptations due to the new time-domain. That is, compared to Definition~\ref{def:notation_Z}, we take
\begin{align*}
B\bydef \vert Q\vert  \left(-\Re(\Lambda)^{-1}\right) = \vert Q\vert  \left\vert\Re(\Lambda)^{-1}\right\vert,\qquad \chi^{(j)}_{\NL} \bydef \sup\limits_{\vert n\vert >\NL} \frac{\vert n\vert^j}{\vert \Re(\lambda_n)\vert} \quad\text{for }j=0,\ldots,2J-1,
\end{align*} 
and the $C^0$-norm in each $\beta^{(j)}$ becomes a simple absolute value since $\bu$ no longer depends on time. Then, Lemma~\ref{lem:fromZtoell1} and Proposition~\ref{prop:ell1forZ} apply, and we get
\begin{align}
\label{eq:Zinfty}
\left\Vert DT(\bu)\right\Vert_{B(\X_\nu,\X_\nu)} &\leq \left\Vert B \left\vert Q^{-1} D\gamma(\bu) \right\vert \right\Vert_{B(\ell^1_\nu,\ell^1_\nu)} \nonumber \\
&\leq \max\left( \left\Vert B\Gamma \, \Pi^{\leq \bN+\NL} \right\Vert_{B(\ell^1_\nu,\ell^1_\nu)}, \sum_{j=0}^{2J-1} \left\vert \beta^{(j)}\right\vert \chi^{(j)}_{\NL}\right) \bydef Z,
\end{align}
which gives us a computable $Z$ bound satisfying~\eqref{e:def_Z_M1}.

Finally, the $W$ bound remains exactly the same (with the new $B$ defined just above), and we take
\begin{align}
\label{eq:Winfty}
W \bydef \sum_{j=0}^{2J-1} \left\Vert B \, \vert\D^j\vert\right\Vert_{B(\ell^1_\nu,\ell^1_\nu)}   \left \vert \left( g^{(j)} \right)'' \right \vert \left( \left\Vert \bar u \right\Vert_{\ell^1_\nu} + \rstar \right),
\end{align}
which is computable and satisfies assumption~\eqref{e:def_W_M1}.

In summary, if there exists $r\in(0,\rstar)$ such that conditions~\eqref{e:inequalities1_M1} and~\eqref{e:inequalities2_M1} hold for the bounds $Y$, $Z$ and $W$ derived in this section, then the fixed point operator $T$ from~\eqref{eq:T_L_infinity} has a fixed point in $\B_{\X_\nu}(\bu,r)$, i.e., the solution $u^\star$ of the PDE~\eqref{eq:PDE} with initial condition $u^\star(\tau,\cdot) = \uin$ is defined for all $t\in[\tau,\infty)$ and remains bounded in $\ell^1_\nu$.

\subsection{Existence of a stable steady state}
\label{sec:steadystate}

We now show that, if we can perform a rigorous integration step on $[\tau,\infty)$, with the estimates derived in the previous subsection, then there exist a locally unique steady state nearby. Recalling that all the eigenvalues of $\L$ are assumed to have negative real part, $\L$ is invertible, which allows us to rewrite the steady state equation $\L u + \gamma(u) = 0$ as a fixed point problem $T^\stat(u) = u$, where $T^{\stat}:\ell^1_\nu\to\ell^1_\nu$ is defined as
\begin{align*}
T^\stat(u) = -\L^{-1}\gamma(u).
\end{align*}
In a loose sense, this operator $T^\stat$ is already contained in the operator $T$ from~\eqref{eq:T_L_infinity}, and this will allow us to deduce the local contractivity of $T^\stat$ from that of $T$.
\begin{theorem}
\label{thm:steadystate}
Let $\uin,\bu\in\ell^1_\nu$. Assume that all the eigenvalues of $\L$ have negative real part, and that, for the bounds $Y$, $Z$ and $W$ derived in Section~\ref{sec:infinity_bounds},  conditions~\eqref{e:inequalities1_M1} and~\eqref{e:inequalities2_M1} hold for some $r\in(0,\rstar)$. Then, there exists a unique steady state $u^\stat$ of the PDE~\eqref{eq:PDE} in $\B_{\ell^1_\nu}(\bu,r)$.
\end{theorem}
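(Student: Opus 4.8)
The plan is to rewrite the steady-state equation $F(u)=0$, equivalently $\L u+\gamma(u)=0$, as a fixed point problem for the operator $T^\stat:\B_{\ell^1_\nu}(\bu,\rstar)\to\ell^1_\nu$ given by $T^\stat(u)=-\L^{-1}\gamma(u)$. This is legitimate because, by assumption, all eigenvalues of $\L$ have negative real part, so $\L=Q\Lambda Q^{-1}$ is invertible with $\L^{-1}=Q\Lambda^{-1}Q^{-1}$ bounded (indeed smoothing), whence $T^\stat$ is of class $C^1$ on $\B_{\ell^1_\nu}(\bu,\rstar)$. I would then apply Theorem~\ref{thm:NewtonKantorovich} in the scalar case $M=1$, with $X=\ell^1_\nu$, $\bar x=\bu$, and the same radii $\rstar$ and $r$ as in the hypothesis. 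The crucial point is that \emph{no new estimates are required}: the bounds $Y$, $Z$, $W$ built in Section~\ref{sec:infinity_bounds} for the time-dependent operator $T$ of~\eqref{eq:T_L_infinity} are also valid bounds~\eqref{e:def_Y_M1}--\eqref{e:def_W_M1} for $T^\stat$. Granting this, assumptions~\eqref{e:inequalities1_M1}--\eqref{e:inequalities2_M1} --- which hold by hypothesis --- immediately produce a unique fixed point $u^\stat\in\B_{\ell^1_\nu}(\bu,r)$ of $T^\stat$, and $T^\stat(u^\stat)=u^\stat$ is equivalent to $\L u^\stat+\gamma(u^\stat)=0$, i.e.\ $u^\stat$ is a steady state of~\eqref{eq:PDE}, which is exactly the claim.

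It remains to verify the three bounds for $T^\stat$. For the residual, since $\bu$ does not depend on time, $T^\stat(\bu)-\bu=-\bigl(\bu+\L^{-1}\gamma(\bu)\bigr)$, so $\|T^\stat(\bu)-\bu\|_{\ell^1_\nu}=\bigl\|\,|\bu+\L^{-1}\gamma(\bu)|\,\bigr\|_{\ell^1_\nu}$, which is majorized by the $Y$ of~\eqref{eq:Yinfty} because the right-hand side of~\eqref{eq:Yinfty} is precisely this quantity \emph{plus} the nonnegative term $\bigl\|\,|Q|\,|Q^{-1}(\uin+\L^{-1}\gamma(\bu))|\,\bigr\|_{\ell^1_\nu}$. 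For the derivative bounds, the only structural change relative to Sections~\ref{sec:Zbounds} and~\ref{sec:Wbounds} is that $\L^{-1}=Q\Lambda^{-1}Q^{-1}$ now plays the role of the integral $\int_\tau^t e^{(t-s)\L}\,\d s$; but for every nonnegative sequence $X$ one has the entrywise domination $|Q|\,|\Lambda^{-1}|\,X\le|Q|\,|\Re(\Lambda)^{-1}|\,X=B\,X$, since $|\lambda_n|\ge|\Re(\lambda_n)|>0$ for all $n$ --- and this is exactly where the negativity (hence non-vanishing) of all real parts of the eigenvalues is used. Therefore $|{-}\L^{-1}D\gamma(\bu)h|\le B\,|Q^{-1}D\gamma(\bu)|\,|h|\le B\Gamma|h|$ by the argument of Lemma~\ref{lem:fromZtoell1} (with the $C^0$-norms there replaced by moduli, as $\bu$ is time-independent), so $\|DT^\stat(\bu)\|_{B(\ell^1_\nu,\ell^1_\nu)}\le\|B\Gamma\|_{B(\ell^1_\nu,\ell^1_\nu)}\le Z$ by Proposition~\ref{prop:ell1forZ} and~\eqref{eq:Zinfty}. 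The Lipschitz bound follows the same route: for $u=\bu+v\in\B_{\ell^1_\nu}(\bu,\rstar)$ one expands $\bigl(DT^\stat(u)-DT^\stat(\bu)\bigr)h=-\L^{-1}\sum_{j=0}^{2J-1}\D^j\int_0^1(g^{(j)})''(\bu+\tilde s v)\ast v\ast h\,\d\tilde s$ exactly as in~\eqref{eq:D2T}, applies the same entrywise domination together with Lemma~\ref{lem:abs_pol}, and obtains $\|(DT^\stat(u)-DT^\stat(\bu))h\|_{\ell^1_\nu}\le W\,\|v\|_{\ell^1_\nu}\|h\|_{\ell^1_\nu}$ with the $W$ of~\eqref{eq:Winfty}.

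I do not expect a genuine obstacle. The entire content is the remark --- formalized through the entrywise inequality $|\Lambda^{-1}|\le|\Re(\Lambda)^{-1}|$ --- that, once a rigorous integration step on $[\tau,\infty)$ has gone through, the static operator $T^\stat$ is \emph{already dominated}, in each of the three quantities demanded by Theorem~\ref{thm:NewtonKantorovich}, by the evolution operator $T$. The only mildly delicate points are to make sure the $Y$ bound for $T^\stat$ really is bounded by~\eqref{eq:Yinfty} (it is, because the initial data $\uin$ enters~\eqref{eq:Yinfty} only through an additive nonnegative term) and to note that the resulting $u^\stat$ automatically lies in $\ell^1_\nu$ rather than merely in $\X_\nu$, since $T^\stat$ maps $\ell^1_\nu$ into itself.
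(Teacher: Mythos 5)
Your proposal is correct and follows essentially the same strategy as the paper: reformulate the steady-state problem as a fixed point of $T^\stat(u)=-\L^{-1}\gamma(u)$ and reuse the $Y$, $Z$, $W$ bounds from Section~\ref{sec:infinity_bounds} to apply Theorem~\ref{thm:NewtonKantorovich}. The only difference is in how the reuse is justified: the paper first observes, via~\eqref{eq:TtoTstat}, that $T^\stat(u)=\lim_{t\to\infty}T(u)(t)$ for time-independent $u$, and then transfers all three bounds in one stroke by noting that a limit at $t\to\infty$ is dominated by a supremum in time (so the $\X_\nu$-bounds for $T$ immediately become $\ell^1_\nu$-bounds for $T^\stat$); you instead re-derive the three bounds for $T^\stat$ directly, replacing the time integral $\int_\tau^t e^{(t-s)\L}\,\d s$ by $-\L^{-1}$ and invoking the entrywise inequality $|\Lambda^{-1}|\le|\Re(\Lambda)^{-1}|$. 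Both routes are valid and yield the same $B$ operator; the paper's limit argument is a bit slicker since it avoids any re-derivation, while your argument makes the underlying spectral estimate explicit.
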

\begin{proof}
As we have seen in~\eqref{eq:Tu-u_cste}, if $u\in\X_\nu$ is time-independent (or equivalently, if $u\in\ell^1_\nu$), then for all $t\geq \tau$
\begin{align}
\label{eq:TtoTstat}
T(u)(t) &= e^{(t-\tau)\L}\left(\uin+\L^{-1}\gamma(u)\right) - \L^{-1}\gamma(u) \nonumber \\
&= e^{(t-\tau)\L}\left(\uin-T^\stat(u)\right) + T^\stat(u).
\end{align}
In particular, $T(u)(t) \underset{t\to\infty}{\longrightarrow} T^\stat(u)$. Moreover, since all the estimates involving the $\X_\nu$-norm are obtained by taking a supremum in time, we immediately get
\begin{align*}
\left\Vert T^\stat(\bu)-\bu\right\Vert_{\ell^1_\nu} \leq \left\Vert T(\bu)-\bu\right\Vert_{\X_\nu} \leq Y,
\end{align*} 
with $Y$ as in~\eqref{eq:Yinfty},
\begin{align*}
\left\Vert DT^\stat(\bu)\right\Vert_{B(\ell^1_\nu,\ell^1_\nu)} \leq \left\Vert DT(\bu)\right\Vert_{B(\X_\nu,\X_\nu)} \leq Z,
\end{align*} 
with $Z$ as in~\eqref{eq:Zinfty}, and similarly
\begin{align*}
\left\Vert DT^\stat(u)-DT^\stat(\bu)\right\Vert_{B(\ell^1_\nu,\ell^1_\nu)} \leq W \left\Vert u-\bu\right\Vert_{\ell^1_\nu},\qquad \text{for all } u\in\B_{\ell^1_\nu}(\bu,\rstar),
\end{align*} 
with $W$ as in~\eqref{eq:Winfty}. That is, the estimates $Y$, $Z$ and $W$ derived for the fixed point operator $T$ in~\eqref{eq:T_L_infinity}, corresponding to the evolution PDE~\eqref{eq:PDE} on the time interval $[\tau,\infty)$, also apply for the fixed point operator $T^\stat$ corresponding to steady states. We can therefore apply Theorem~\ref{thm:NewtonKantorovich} to $T^\stat$, which yields the existence of a unique steady state $u^\stat$ of the PDE such that $\left\Vert u^\stat - \bu \right\Vert_{\ell^1_\nu} \leq r$. 
\end{proof}

Unsurprisingly, we next show that the global solution $u^\star$ of the PDE~\eqref{eq:PDE} with initial condition $u^\star(\tau,\cdot) = \uin$ obtained as a fixed point of $T$ from~\eqref{eq:T_L} converges to the steady state $u^\stat$.
\begin{lemma}
\label{lem:convergencetosteadystate}
Under the assumptions of Theorem~\ref{thm:steadystate}, denoting $u^\star$ the fixed point of $T$ from~\eqref{eq:T_L_infinity}, we have that $u^\star(t)$ converges to $u^\stat$ in $\ell^1_\nu$ as $t\to\infty$.
\end{lemma}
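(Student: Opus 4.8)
The plan is to reduce the statement to showing that the quantity
\[
\Phi(t_0)\bydef\sum_{n\in\ZZ}\nu^{\vert n\vert}\sup_{t\geq t_0}\bigl\vert u^\star_n(t)-u^\stat_n\bigr\vert,\qquad t_0\geq\tau,
\]
tends to $0$ as $t_0\to\infty$. This $\Phi$ is finite (since $u^\star\in\B_{\X_\nu}(\bu,r)$ and $u^\stat\in\B_{\ell^1_\nu}(\bu,r)$, so $\Phi(\tau)\leq 2r$) and non-increasing in $t_0$, hence $\Phi(\infty)\bydef\lim_{t_0\to\infty}\Phi(t_0)$ exists; once $\Phi(\infty)=0$ is established, the lemma follows from $\left\Vert u^\star(t)-u^\stat\right\Vert_{\ell^1_\nu}\leq\Phi(t)$.

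First I would use a restart argument. For fixed $t_0\geq\tau$, a change of variables in the Duhamel formula \eqref{eq:T_L_infinity} shows that $v(t)\bydef u^\star(t+t_0-\tau)$ is the fixed point on $[\tau,\infty)$ of the operator $T$ from \eqref{eq:T_L_infinity} with the initial datum $\uin$ replaced by $u^\star(t_0)$; moreover $\left\Vert v-u^\stat\right\Vert_{\X_\nu}=\Phi(t_0)$. Since $u^\stat$, viewed as a constant-in-time function, is the fixed point of the same operator with initial datum $u^\stat$ (because $\L u^\stat+\gamma(u^\stat)=0$), subtracting these two fixed point identities and Taylor-expanding $\gamma$ around $\bu$ gives
\[
v-u^\stat=\psi+DT(\bu)(v-u^\stat)+\rho,\qquad \psi(t)=e^{(t-\tau)\L}\bigl(u^\star(t_0)-u^\stat\bigr),
\]
with $\rho$ the quadratic remainder. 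Exactly as in Sections~\ref{sec:Zbounds} and \ref{sec:Wbounds}, and using that the ball $\B_{\ell^1_\nu}(\bu,r)$ is convex (so that $(1-\sigma)u^\stat+\sigma u^\star(s)$ remains in it for all $\sigma\in[0,1]$), one obtains $\left\Vert DT(\bu)(v-u^\stat)\right\Vert_{\X_\nu}\leq Z\left\Vert v-u^\stat\right\Vert_{\X_\nu}$ and $\left\Vert\rho\right\Vert_{\X_\nu}\leq Wr\left\Vert v-u^\stat\right\Vert_{\X_\nu}$.

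The crucial step is a tail estimate on $[\tau+h,\infty)$ for suitable $h>0$. For $t\geq\tau+h$, I would split each Duhamel integral as $\int_\tau^t=\int_\tau^{t-h}+\int_{t-h}^t$: the first piece equals $e^{h\L}$ applied to the corresponding term at time $t-h$, and since by hypothesis all eigenvalues of $\L$ have negative real part, setting $\delta\bydef-\sup_n\Re(\lambda_n)>0$ one gains an exponential smallness factor $C_\L e^{-\delta h}$ (with $C_\L$ an explicit constant built from $\vert Q\vert$ and $\vert Q^{-1}\vert$), which also controls $\psi$ on the tail. The second pieces are handled by rerunning the computations of Lemma~\ref{lem:fromZtoell1}, Proposition~\ref{prop:ell1forZ} and their $W$-counterpart over an interval of length $h$, which simply replaces $\Re(\Lambda)^{-1}$ by $\Re(\Lambda)^{-1}(e^{h\Re(\Lambda)}-I)$ everywhere; by entrywise monotonicity this yields constants $Z(h)\leq Z$ and $\tilde W(h)\leq Wr$. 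Collecting the pieces and bounding $\left\Vert u^\star(t_0)-u^\stat\right\Vert_{\ell^1_\nu}\leq\Phi(t_0)$, one arrives at
\[
\Phi(t_0+h)\leq q(h)\,\Phi(t_0),\qquad q(h)\bydef C_\L e^{-\delta h}(1+Z+Wr)+Z(h)+\tilde W(h).
\]
Since $q(h)\leq C_\L e^{-\delta h}(1+Z+Wr)+Z+Wr$ and $Z+Wr<1$ by \eqref{e:inequalities2_M1}, one can fix $h$ large enough that $q(h)<1$; letting $t_0\to\infty$ gives $\Phi(\infty)\leq q(h)\Phi(\infty)$, hence $\Phi(\infty)=0$.

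The main obstacle is this tail estimate: the linear operator $D\gamma(\bu)$ and the quadratic remainder $\rho$ lose up to $2J-1$ spatial derivatives, and are only tame once composed with the smoothing semigroup — which is precisely what the operators $B$ and the numbers $\chi^{(j)}_{\NL}$ of Section~\ref{sec:Zbounds} capture. The work lies in verifying that the finite-horizon versions of these quantities are dominated by the infinite-horizon $Z$ and $Wr$, and that the spectral-gap factor $e^{-\delta h}$ (which uses $\sup_n\Re(\lambda_n)<0$, guaranteed by the standing hyperbolic-stability assumption on $\L$) makes $q(h)<1$ for $h$ large. The rest is routine bookkeeping.
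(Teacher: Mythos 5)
Your proof is correct, but it takes a substantially different and more quantitative route than the paper's. The paper's argument is: $u^\star(t)$ is bounded in $\ell^1_\nu$, and since all eigenvalues of $\L$ have negative real part $u^\star(t)$ converges to some limit $u^{\lim}\in\B_{\ell^1_\nu}(\bu,r)$, which must be a steady state and hence, by the uniqueness furnished by Theorem~\ref{thm:steadystate}, equals $u^\stat$. The convergence assertion in that argument is stated rather tersely --- boundedness of the orbit together with stability of $\L$ alone does not in general force convergence of a solution of $\partial_t u=\L u+\gamma(u)$ (nonlinear terms can sustain oscillations even when $\L$ is strictly stable), so some input from the contraction bounds $Z+Wr<1$ is implicitly needed.

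Your restart argument supplies exactly that missing quantitative ingredient: you observe that $v(t)=u^\star(t+t_0-\tau)$ is the fixed point of the same $T$ with initial datum $u^\star(t_0)$, subtract the fixed-point identities for $v$ and for the constant trajectory $u^\stat$, and split the Duhamel integral on $[\tau+h,\infty)$ into a far-history piece (which inherits a factor $C_\L e^{-\delta h}$ from the spectral gap of $\L$) and a near-history piece of length $h$ (whose $B$-type operators are entrywise dominated by the infinite-horizon ones, giving $Z(h)\leq Z$, $\tilde W(h)\leq Wr$). This yields $\Phi(t_0+h)\leq q(h)\Phi(t_0)$ with $q(h)<1$ for $h$ large, whence $\Phi(t_0)\to 0$. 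Two points in your write-up deserve emphasis because they are where the argument could silently fail: (i) the convexity check that $(1-\sigma)u^\stat+\sigma v(s)$ stays in $\B_{\ell^1_\nu}(\bu,\rstar)$, needed to invoke the $W$-estimate, relies on both $\|u^\stat-\bu\|_{\ell^1_\nu}\leq r\leq\rstar$ and $\|v-\bu\|_{\X_\nu}\leq r\leq\rstar$, which you correctly have; (ii) in the near-history piece the relevant $\sup$ runs over $s\in[t-h,t]$, and since $t\geq\tau+h$ implies $t-h\geq\tau$ this is indeed bounded by $\Phi(t_0)$ (not by $\Phi(t_0+h)$), which is what makes the recursion close. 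Compared with the paper, your route bypasses the identification-by-uniqueness step entirely (you converge directly to $u^\stat$) and has the advantage of producing an explicit exponential rate; the paper's route is shorter but relies on a convergence claim that is not spelled out.
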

\begin{proof}
Since $T(u^\star) = u^\star$, we have for all $t\geq \tau$
\begin{align*}
u^\star(t) = e^{(t-\tau)\L}\uin + \int_\tau^t e^{(t-s)\L} \gamma(u^\star(s))\d s.
\end{align*}
We recall that $u^\star(t)$ is bounded in $\ell^1_\nu$, as $\left\Vert u^\star(t) - \bu\right\Vert_{\ell^1_\nu} \leq \left\Vert u^\star - \bu\right\Vert_{\X_\nu} \leq r$, and that all the eigenvalues of $\L$ have negative real part, hence $u^\star(t)$ converges in $\ell^1_\nu$ as $t\to\infty$, to a limit $u^{\lim}$ in $\B_{\ell^1_\nu}(\bu,r)$. Since $u^\star$ solves the PDE~\eqref{eq:PDE} and converges to $u^{\lim}$, $u^{\lim}$ must be a steady state of the PDE, i.e., a fixed point of $T^\stat$. However, by Theorem~\ref{thm:steadystate} $u^\stat$ is the unique fixed point of $T^\stat$ in $\B_{\ell^1_\nu}(\bu,r)$, therefore $u^{\lim} = u^\stat$.
\end{proof}
\begin{remark}
For parabolic equations of the form~\eqref{eq:PDE}, even if the solution is defined for all positive times, it does not necessarily have to converge to an equilibrium. It could for instance converge to a periodic orbit, or even have way more complicated (e.g., chaotic) dynamics. However, our setup does not allow us to rigorously integrate all the way to infinity for arbitrary solutions, but only when all the eigenvalues of $\L$ have negative real part (this assumption is, for instance, crucial to obtain~\eqref{eq:Yinfty}), which can only happen if the solution converges to a hyperbolic stable steady state. 
\end{remark}
Next, we show that under very slightly strengthened assumptions (namely, that the inequality in~\eqref{e:inequalities1_M1} is strict), all initial conditions in an explicit neighborhood of $u^\stat$ lead to global solutions converging to $u^\stat$. 
\begin{theorem}
\label{thm:stability}
Let $\uin,\bu\in\ell^1_\nu$. Assume that all the eigenvalues of $\L$ have negative real part, and that, for the bounds $Y$, $Z$ and $W$ derived in Section~\ref{sec:infinity_bounds}, there exists $r_{\min}\in(0,\rstar)$ such that
\begin{align*}
Y + Z r_{\min} + \frac{1}{2}W r_{\min}^2 < r_{\min}, \\
Z + W r_{\min} < 1.
\end{align*}
Then, let $r_{\max}\in(r_{\min},\rstar]$ such that
\begin{align*}
Y + Z r_{\max} + \frac{1}{2}W r_{\max}^2 \leq r_{\max}, \\
Z + W r_{\max} < 1,
\end{align*}
and let $\varepsilon>0$ such that the two following conditions are satisfied
\begin{align}
\left(1-(Z+Wr_{\min})\right)^2 - 4 W\left\Vert \vert Q\vert\, \vert Q^{-1}\vert \right\Vert_{B(\ell^1_\nu,\ell^1_\nu)}\varepsilon >0,
\label{eq:epsilon1}\\
\frac{1-(Z+Wr_{\min}) - \sqrt{\left(1-(Z+Wr_{\min})\right)^2 - 4 W\left\Vert \vert Q\vert\, \vert Q^{-1}\vert \right\Vert_{B(\ell^1_\nu,\ell^1_\nu)}\varepsilon}}{2W} \leq r_{\max}-r_{\min}.
\label{eq:epsilon2}
\end{align} 
Then, for any initial condition $\vin$ in $\B_{\ell^1_\nu}(u^\stat,\varepsilon)$, the solution $v^\star$ of the evolution PDE~\eqref{eq:PDE} with initial condition $v^\star(\tau,\cdot) = \vin$ is defined for all $t\geq \tau$, and converges to $u^\stat$ as $t\to\infty$.
\end{theorem}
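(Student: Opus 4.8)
The plan is to re-run the fixed point argument of Section~\ref{sec:infinity_bounds}, but starting from $\vin$ and recentering everything at the steady state $u^\stat$ rather than at $\bu$. First, Theorem~\ref{thm:steadystate} applies with $r=r_{\min}$ (which satisfies the strict forms of~\eqref{e:inequalities1_M1}--\eqref{e:inequalities2_M1}), yielding a locally unique steady state $u^\stat\in\B_{\ell^1_\nu}(\bu,r_{\min})$. Given $\vin\in\B_{\ell^1_\nu}(u^\stat,\varepsilon)$, let $T_{\vin}$ be the operator~\eqref{eq:T_L_infinity} with $\uin$ replaced by $\vin$; since all eigenvalues of $\L$ have negative real part, $T_{\vin}$ maps $\X_\nu$ into itself and any fixed point is the global solution $v^\star$ of~\eqref{eq:PDE} on $[\tau,\infty)$ with $v^\star(\tau,\cdot)=\vin$. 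The key observation is that, viewing $u^\stat$ as a constant-in-time element of $\X_\nu$ and using $T^\stat(u^\stat)=u^\stat$, the computation in~\eqref{eq:TtoTstat} gives $T_{\vin}(u^\stat)(t)=e^{(t-\tau)\L}(\vin-u^\stat)+u^\stat$, hence, with $\L=Q\Lambda Q^{-1}$ and $\Re(\lambda_n)<0$,
\[
\left\Vert T_{\vin}(u^\stat)-u^\stat\right\Vert_{\X_\nu}\leq\left\Vert\vert Q\vert\,\vert Q^{-1}\vert\right\Vert_{B(\ell^1_\nu,\ell^1_\nu)}\varepsilon=:Y_\varepsilon .
\]

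Next I would transfer the $Z$- and $W$-estimates from $\bu$ to $u^\stat$. Since $DT_{\vin}=DT$ does not depend on the initial datum, and since the quantity $W$ from~\eqref{eq:Winfty} is in fact --- via the second-derivative estimate underlying~\eqref{eq:D2T} together with monotonicity of the polynomials $\vert(g^{(j)})''\vert$ --- a Lipschitz constant for $DT$ on the whole ball $\B_{\X_\nu}(\bu,\rstar)$, we obtain for any $u\in\B_{\X_\nu}(u^\stat,\rho)$ with $\rho\leq\rstar-r_{\min}$ (note $\Vert u^\stat-\bu\Vert_{\ell^1_\nu}\leq r_{\min}$, so this ball lies inside $\B_{\X_\nu}(\bu,\rstar)$), by the triangle inequality and the bounds $Z$, $W$ of Section~\ref{sec:infinity_bounds},
\[
\left\Vert DT_{\vin}(u)\right\Vert_{B(\X_\nu,\X_\nu)}\leq\left\Vert DT(\bu)\right\Vert_{B(\X_\nu,\X_\nu)}+W\Vert u^\stat-\bu\Vert_{\ell^1_\nu}+W\rho\leq Z+Wr_{\min}+W\rho .
\]
Consequently $T_{\vin}$ is Lipschitz on $\B_{\X_\nu}(u^\stat,\rho)$ with constant $(Z+Wr_{\min})+W\rho$, and for $u$ in that ball $\Vert T_{\vin}(u)-u^\stat\Vert_{\X_\nu}\leq Y_\varepsilon+\bigl((Z+Wr_{\min})+W\rho\bigr)\rho$.

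I would then pick $\rho=\rho_-$ equal to the smaller root of $W\rho^2-\bigl(1-(Z+Wr_{\min})\bigr)\rho+Y_\varepsilon=0$, that is, the left-hand side of~\eqref{eq:epsilon2}: condition~\eqref{eq:epsilon1} makes the discriminant positive (so $\rho_-\in\RR$, and $\rho_->0$ because $Y_\varepsilon>0$ and $Z+Wr_{\min}<1$), while~\eqref{eq:epsilon2} together with $r_{\max}\leq\rstar$ gives $\rho_-\leq r_{\max}-r_{\min}\leq\rstar-r_{\min}$. By the defining equation for $\rho_-$ one has $Y_\varepsilon+\bigl((Z+Wr_{\min})+W\rho_-\bigr)\rho_-=\rho_-$, so $T_{\vin}$ maps the closed ball $\B_{\X_\nu}(u^\stat,\rho_-)$ into itself, with contraction constant $(Z+Wr_{\min})+W\rho_-\leq Z+Wr_{\max}<1$. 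The Banach fixed point theorem then produces a fixed point $v^\star\in\B_{\X_\nu}(u^\stat,\rho_-)$, i.e. a solution of~\eqref{eq:PDE} defined and bounded in $\ell^1_\nu$ for all $t\geq\tau$. Convergence $v^\star(t)\to u^\stat$ follows verbatim from the argument of Lemma~\ref{lem:convergencetosteadystate}: $v^\star(t)$ stays in $\B_{\ell^1_\nu}(u^\stat,\rho_-)$ and, since $\Re(\lambda_n)<0$ for all $n$, the Duhamel representation forces $v^\star(t)$ to converge in $\ell^1_\nu$ to a steady state in $\B_{\ell^1_\nu}(u^\stat,\rho_-)\subseteq\B_{\ell^1_\nu}(\bu,r_{\max})$, which must be $u^\stat$ by local uniqueness (e.g. because $T^\stat$ has derivative bounded by $Z+Wr_{\max}<1$ on that ball).

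Conceptually this is merely a perturbation of Section~\ref{sec:infinity}, so I do not anticipate a genuine obstacle; the delicate point is bookkeeping --- deciding to recenter the contraction at $u^\stat$ (rather than at $\bu$ or $\uin$), correctly carrying the extra $Wr_{\min}$ term when transporting the $Z$- and $W$-bounds from $\bu$ to $u^\stat$, and checking that the somewhat opaque conditions~\eqref{eq:epsilon1}--\eqref{eq:epsilon2} are precisely what guarantees both that the self-mapping quadratic inequality admits a root $\rho_-\leq r_{\max}-r_{\min}$ and that the resulting Lipschitz constant remains strictly below $1$.
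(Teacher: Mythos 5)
Your proposal is correct and follows essentially the same route as the paper's proof: recentering the fixed point at $u^\stat$, bounding $\left\Vert T_{\vin}(u^\stat)-u^\stat\right\Vert_{\X_\nu}$ by $\left\Vert\vert Q\vert\,\vert Q^{-1}\vert\right\Vert_{B(\ell^1_\nu,\ell^1_\nu)}\varepsilon$, transporting the $Z$- and $W$-bounds from $\bu$ via the triangle inequality to obtain the quadratic self-mapping inequality, choosing the smaller root $\delta=\rho_-$ (which conditions~\eqref{eq:epsilon1}--\eqref{eq:epsilon2} make valid and $\leq r_{\max}-r_{\min}$), and concluding convergence via the argument of Lemma~\ref{lem:convergencetosteadystate} together with uniqueness of the steady state in $\B_{\ell^1_\nu}(\bu,r_{\max})$. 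The only cosmetic differences are notational ($\rho$ vs.\ $\delta$) and that you verify the contraction constant directly as $Z+Wr_{\min}+W\rho_-\leq Z+Wr_{\max}<1$ whereas the paper reads it off from the quadratic inequality; both are fine.
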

\begin{proof}
First note that the existence of a positive $\varepsilon$ satisfying~\eqref{eq:epsilon1}-\eqref{eq:epsilon2} is guaranteed by the fact that $Z + W r_{\min} < 1$. Another preliminary observation is that the assumptions of Theorem~\ref{thm:stability} ensure that Theorem~\ref{thm:steadystate} holds both for $r=r_{\min}$ and $r=r_{\max}$. That is, we have that the steady state $u^\stat$ belongs to the ball $\B_{\ell^1_\nu}(\bu,r_{\min})$, and that it is the unique steady state within the larger ball $\B_{\ell^1_\nu}(\bu,r_{\max})$.

In order to better track the dependencies with respect to the initial condition, we now denote $T_\uin$ the fixed point operator from~\eqref{eq:T_L_infinity} with initial condition $\uin$. We start by showing that, for all $\vin\in\B_{\ell^1_\nu}(u^\stat,\varepsilon)$, the operator $T_\vin$ has a fixed point near $u^\stat$. We are going to obtain this result by finding a $\delta>0$ (independent of $\varepsilon$) for which $T_\vin$ maps $\B_{\X_\nu}(u^\stat,\delta)$ into itself and is contracting on that ball.
According to~\eqref{eq:TtoTstat},
\begin{align*}
\left(T_\vin(u^\stat) - u^\stat\right)(t) = e^{(t-\tau)\L}\left(\vin - u^\stat\right) \qquad \text{for all }t\geq \tau,
\end{align*}
and using again that all the eigenvalues of $\L$ have negative real part, we get
\begin{align}
\label{eq:Yvin}
\left\Vert T_\vin(u^\stat) - u^\stat \right\Vert_{\X_\nu} \leq \left\Vert \vert Q\vert\, \vert Q^{-1}\vert \right\Vert_{B(\ell^1_\nu,\ell^1_\nu)} \varepsilon.
\end{align}
Then, noting that $DT_\vin(u)$ does not depend on $\vin$, we can control $ DT_\vin(u)$ using the estimates $Z$ and $W$ for $DT_\uin$ from~\eqref{eq:Zinfty}-\eqref{eq:Winfty}. More precisely, provided $u\in\B_{\X_\nu}(\bu,\rstar)$ we get
\begin{align*}
\left\Vert DT_\vin(u) \right\Vert_{B(\X_\nu,\X_\nu)} &= \left\Vert DT_\uin(u) \right\Vert_{B(\X_\nu,\X_\nu)} \\
&\leq \left\Vert DT_\uin(\bu) \right\Vert_{B(\X_\nu,\X_\nu)} + \left\Vert DT_\uin(u) - DT_\uin(\bu) \right\Vert_{B(\X_\nu,\X_\nu)} \\
&\leq Z + W \left\Vert u - \bu\right\Vert_{\X_\nu}.
\end{align*}
In particular, if $u\in \B_{\X_\nu}(u^\stat,\delta)$ with $\delta\leq \rstar -r_{\min}$, then $u\in\B_{\X_\nu}(\bu,\rstar)$ and
\begin{align}
\label{eq:Zvin}
\left\Vert DT_\vin(u) \right\Vert_{B(\X_\nu,\X_\nu)} 
&\leq Z + W \left(\left\Vert u^\stat - \bu\right\Vert_{\X_\nu} + \left\Vert u - u^\stat\right\Vert_{\X_\nu} \right) \nonumber\\
&\leq Z + W \left(r_{\min} + \delta \right).
\end{align}
Combining this estimate with~\eqref{eq:Yvin} and using the mean value inequality then shows that, for all $u\in \B_{\X_\nu}(u^\stat,\delta)$ with $\delta\leq \rstar-r_{\min}$,
\begin{align*}
\left\Vert T_\vin(u) - u^\stat \right\Vert_{\X_\nu} &\leq \left\Vert T_\vin(u^\stat) - u^\stat \right\Vert_{\X_\nu} + \left\Vert T_\vin(u) - T_\vin(u^\stat) \right\Vert_{\X_\nu} \\
&\leq \left\Vert \vert Q\vert\, \vert Q^{-1}\vert \right\Vert_{B(\ell^1_\nu,\ell^1_\nu)} \varepsilon + \left(Z + Wr_{\min} \right)\delta  + W \delta^2.
\end{align*}
Using again that $Z + Wr_{\min}< 1$, condition~\eqref{eq:epsilon1} ensures that there exists $\delta>0$ such that 
\begin{align}
\label{eq:delta}
\left\Vert \vert Q\vert\, \vert Q^{-1}\vert \right\Vert_{B(\ell^1_\nu,\ell^1_\nu)} \varepsilon + \left(Z + Wr_{\min} \right)\delta  + W \delta^2 \leq \delta.
\end{align}
A suitable $\delta$ satisfying~\eqref{eq:delta} is given by
\begin{align*}
\delta = \frac{1-(Z+Wr_{\min}) - \sqrt{\left(1-(Z+Wr_{\min})\right)^2 - 4 W\left\Vert \vert Q\vert\, \vert Q^{-1}\vert \right\Vert_{B(\ell^1_\nu,\ell^1_\nu)}\varepsilon}}{2W},
\end{align*}
for which we have $\delta\leq r_{\max}-r_{\min} \leq \rstar - r_{\min}$ thanks to~\eqref{eq:epsilon2}. In particular, $\B_{\X_\nu}(u^\stat,\delta)$ is included in $\B_{\X_\nu}(\bu,\rstar)$, and therefore~\eqref{eq:delta} shows that 
\begin{align*}
\left\Vert T_\vin(u) - u^\stat \right\Vert_{\X_\nu} \leq \delta\qquad \text{for all }u\in \B_{\X_\nu}(u^\stat,\delta),
\end{align*}
i.e., $T_\vin$ maps $\B_{\X_\nu}(u^\stat,\delta)$ into itself. Estimate~\eqref{eq:delta} also implies that
\begin{align*}
Z + Wr_{\min} + W\delta < 1,
\end{align*}
and then~\eqref{eq:Zvin} shows that, for all $u$ in $\B_{\X_\nu}(u^\stat,\delta)$,
\begin{align*}
\left\Vert DT_\vin(u) \right\Vert_{B(\X_\nu,\X_\nu)} \leq Z + W\left(r_{\min} + \delta\right) < 1.
\end{align*}
Therefore, $T_\vin$ is a contraction on $\B_{\X_\nu}(u^\stat,\delta)$, and has a unique fixed point $v^\star$ in $\B_{\X_\nu}(u^\stat,\delta)$. 

We now conclude with similar arguments as in Lemma~\ref{lem:convergencetosteadystate}. This $v^\star$ is the solution of the evolution PDE~\eqref{eq:PDE} with initial condition $v^\star(\tau,\cdot) = \vin$, which is therefore global in time, and converges as $t\to\infty$ to a steady state $v^{\lim}$ of the PDE~\eqref{eq:PDE}. Since $v^{\lim}$ belongs to $\B_{\ell^1_\nu}(u^\stat,\delta)$, with $\delta\leq r_{\max}-r_{\min}$, $v^{\lim}$ also belongs to $\B_{\ell^1_\nu}(\bu,r_{\max})$, and by uniqueness $v^{\lim}$ must be equal to $u^\stat$.
\end{proof}

\begin{remark}
Theorem~\ref{thm:stability} proves that $\B_{\ell^1_\nu}(u^\stat,\varepsilon)$ is included in the domain of attraction of~$u^\stat$, and does so by using only bounds we have already obtained during the rigorous integration. 
In particular, while the inequalities~\eqref{eq:epsilon1} and~\eqref{eq:epsilon2} allow for the selection of an explicit value for the radius $\varepsilon$, the center of the ball $\B_{\ell^1_\nu}(u^\stat,\varepsilon)$ is not readily accessible, and this ball does not even contain $\bu$ if $\varepsilon<r_{\min}$. 
However, if one is specifically interested in getting a verified domain of attraction of $u^\stat$, one can slightly adapt the estimates to obtain better information.
Before doing so, pinpointing the location of $u^\stat$ can be improved by setting $Y^\infty =   \left\Vert \bu + \L^{-1}\gamma(\bu)  \right\Vert_{\ell^1_\nu}$, and computing (for the same $Z$ and $W$ as before, since $\bu$ is considered fixed in this analysis)
\[
  r_{\min}^\stat = \frac{(1-Z)-\sqrt{(1-Z)^2-2Y^\infty W}}{W}.
\]
Assuming for simplicity that $\rstar \geq \frac{1-Z}{W} = r_{\max}$, the analysis 
in the proof of Theorem~\ref{thm:steadystate} then shows that $u^\stat \in 
\B_{\ell^1_\nu}(\bu,r_{\min}^\stat)$ is the unique stationary point inside 
$\B_{\ell^1_\nu}(\bu,r_{\max})$. This leads to somewhat improved estimates for $\varepsilon$ in Theorem~\ref{thm:stability} as well. 

Now consider $Y=Y_\uin$, defined in~\eqref{eq:Yinfty}, to be a bound that depends on the initial data $\uin$. Given a point $\uin$ for which we want to prove that it is in the domain of attraction of $u^\stat$, we then check that 
$Y_\uin < \frac{(1-Z)^2}{2W}$. If so, then we know from  Lemma~\ref{lem:convergencetosteadystate} that the global solution starting at~$\uin$ converges to $u^\stat$. Hence the set
\[
  \left\{ \uin \in \B_{\ell^1_\nu}(\bu,r_{\max}) \, : \, 
  Y_\uin < \frac{(1-Z)^2}{2W} \right\}
\]
is part of the domain of attraction of $u^\stat$. It is not difficult to check rigorously that some $\uin$ lies in this set by using the computable expressions for $Y=Y_\uin$, $Z$ and $W$. 
\end{remark}

Finally, let us prove that there is a spectral gap for the linearization of the PDE~\eqref{eq:PDE} at the equilibrium $u^\stat$.
\begin{theorem}
Repeat the assumptions of Theorem~\ref{thm:steadystate}, and let $F(u)\bydef \L u +\gamma(u)$ be the right-hand-side of the PDE~\eqref{eq:PDE}. Then, there exists $\alpha<0$ such that the spectrum of  $DF(u^\stat)$ is contained in $\{z\in\CC,\ \Re(z) < \alpha\}$.
\end{theorem}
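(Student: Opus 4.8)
\textit{Proof proposal.}

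The starting point is that $DF(u^\stat)=\L+D\gamma(u^\stat)$ is a bounded-below perturbation of the reference operator $\L=Q\Lambda Q^{-1}$, whose spectrum is the explicitly known set $\{\lambda_n:n\in\ZZ\}$. Since $\Re(\lambda_n)=-n^{2J}+\Re\sum_{j}(in)^j\bv^{(j)}_0\to-\infty$ as $|n|\to\infty$ by~\eqref{eq:deflambda}, while by hypothesis every $\lambda_n$ has negative real part, the quantity $\lambda_{\max}:=\sup_{n\in\ZZ}\Re(\lambda_n)$ is a maximum over finitely many candidates and satisfies $\lambda_{\max}<0$. Moreover the rigorous integration step has already supplied the key norm estimate: the proof of Theorem~\ref{thm:steadystate} shows that the bounds $Y$, $Z$, $W$ of Section~\ref{sec:infinity_bounds} apply verbatim to $T^\stat(u)=-\L^{-1}\gamma(u)$, so that, using $\|u^\stat-\bu\|_{\ell^1_\nu}\le r<\rstar$ and the Lipschitz bound $W$,
\begin{equation*}
\bigl\|\L^{-1}D\gamma(u^\stat)\bigr\|_{B(\ell^1_\nu,\ell^1_\nu)}=\bigl\|DT^\stat(u^\stat)\bigr\|_{B(\ell^1_\nu,\ell^1_\nu)}\le\bigl\|DT^\stat(\bu)\bigr\|+W\|u^\stat-\bu\|_{\ell^1_\nu}\le Z+Wr<1,
\end{equation*}
where the final inequality is exactly~\eqref{e:inequalities2_M1}.

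Now fix $\alpha\in(\lambda_{\max},0)$, to be pinned down later, and let $z\in\CC$ with $\Re(z)\ge\alpha$. Then $z\notin\{\lambda_n\}=\mathrm{spec}(\L)$, so $(zI-\L)^{-1}=Q(zI-\Lambda)^{-1}Q^{-1}$ is a bounded operator on $\ell^1_\nu$, and we may factor
\begin{equation*}
zI-DF(u^\stat)=(zI-\L)\bigl(I-(zI-\L)^{-1}D\gamma(u^\stat)\bigr).
\end{equation*}
Hence it suffices to prove $\|(zI-\L)^{-1}D\gamma(u^\stat)\|_{B(\ell^1_\nu,\ell^1_\nu)}<1$ for every such $z$: a Neumann series then inverts the second factor, so $zI-DF(u^\stat)$ is invertible and $z$ lies in the resolvent set of $DF(u^\stat)$.

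The heart of the argument is a pointwise comparison of resolvents. For every $n$ and every $z$ with $\Re(z)\ge\alpha$,
\begin{equation*}
\frac{1}{|z-\lambda_n|}\le\frac{1}{\Re(z)-\Re(\lambda_n)}\le\frac{1}{\alpha-\Re(\lambda_n)}=\frac{1}{|\Re(\lambda_n)|-|\alpha|}\le\frac{c(\alpha)}{|\Re(\lambda_n)|},\qquad c(\alpha):=\frac{|\lambda_{\max}|}{|\lambda_{\max}|-|\alpha|},
\end{equation*}
where the last step uses $|\Re(\lambda_n)|\ge|\lambda_{\max}|$. Next I would revisit the derivation of the $Z$ and $W$ bounds for $T^\stat$ in Section~\ref{sec:infinity_bounds}: there $DT^\stat(u)=-Q\Lambda^{-1}Q^{-1}D\gamma(u)$ is estimated entrywise, the modulus $|\Lambda^{-1}|$ being dominated by the diagonal multiplier $|\Re(\Lambda)^{-1}|=\diag(|\Re(\lambda_n)|^{-1})$, and all quantities built from it ($B$, $\chi^{(j)}_{\NL}$, $\|B\,|\D^j|\|$, etc.) are monotone and positively homogeneous of degree one in this multiplier. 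Replacing $\L^{-1}$ by $(zI-\L)^{-1}=Q(zI-\Lambda)^{-1}Q^{-1}$ amounts precisely to replacing $|\Re(\lambda_n)|^{-1}$ by $|z-\lambda_n|^{-1}\le c(\alpha)|\Re(\lambda_n)|^{-1}$, so the same chain of inequalities (applied to $D\gamma(\bu)$ for the $Z$ part and to $D\gamma(u^\stat)-D\gamma(\bu)$ for the $W$ part, as in the displayed bound above) yields, uniformly over $z$ with $\Re(z)\ge\alpha$,
\begin{equation*}
\bigl\|(zI-\L)^{-1}D\gamma(u^\stat)\bigr\|_{B(\ell^1_\nu,\ell^1_\nu)}\le c(\alpha)\,(Z+Wr).
\end{equation*}
Since $Z+Wr<1$ and $c(\alpha)\to1$ as $\alpha\to0^-$, I can choose $\alpha\in(\lambda_{\max},0)$ with $c(\alpha)(Z+Wr)<1$; then the factorization is invertible for all $z$ with $\Re(z)\ge\alpha$, which gives $\mathrm{spec}(DF(u^\stat))\subset\{z\in\CC:\Re(z)<\alpha\}$.

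The main obstacle I expect is the bookkeeping in the penultimate step: one must check that the entrywise estimates underlying the $Z$ and $W$ bounds really do transfer verbatim when $|\Re(\Lambda)^{-1}|$ is swapped for $|(zI-\Lambda)^{-1}|$, in particular that $(zI-\L)^{-1}D\gamma(u^\stat)$ — a priori only densely defined — extends to a bounded operator on $\ell^1_\nu$ with the claimed norm, and that $D\gamma(u^\stat)$ is $\L$-bounded with relative bound zero (which follows from the $\D^{j}$-structure with $j\le 2J-1$ against the leading $\D^{2J}$ in $\L$), so that $DF(u^\stat)$ is a well-defined closed operator on $D(\L)$ for which the resolvent computation is meaningful. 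Everything else is a routine repackaging of bounds already in hand.
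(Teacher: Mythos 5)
Your proposal is correct, and it is a genuine variant of the paper's proof rather than a reproduction of it. Both arguments hinge on the same factorization $zI-DF(u^\stat)=(zI-\L)\bigl(I-(zI-\L)^{-1}D\gamma(u^\stat)\bigr)$ and on bounding $\bigl\|(zI-\L)^{-1}D\gamma(u^\stat)\bigr\|_{B(\ell^1_\nu,\ell^1_\nu)}$ via the entrywise estimate $|z-\lambda_n|^{-1}\le|\Re(z)-\Re(\lambda_n)|^{-1}$, which after passing through the $B$, $\Gamma$, $\chi^{(j)}_{\NL}$ machinery reduces to $Z+Wr<1$. Where you diverge from the paper is in the topological packaging. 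The paper restricts the resolvent estimate to the vertical strip $\Re(z)\in[\alpha,0]$, introduces the homotopy $\L_s=\L+s\,D\gamma(u^\stat)$, shows $\L_s$ has no eigenvalue in the strip for any $s\in[0,1]$, and then invokes upper semicontinuity of the spectrum (Kato, IV.3.16) to transport the ``spectrum to the left of $\alpha$'' property from $\L_0=\L$ to $\L_1=DF(u^\stat)$. You instead prove the resolvent estimate \emph{uniformly on the entire half-plane} $\Re(z)\ge\alpha$, which immediately places the spectrum in $\{\Re(z)<\alpha\}$ with no homotopy and no appeal to semicontinuity of the spectrum of unbounded operators. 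This is arguably cleaner, and it removes a potentially delicate step (ruling out eigenvalues ``appearing from infinity'' on the right), at essentially no extra cost: the comparison $|z-\lambda_n|^{-1}\le c(\alpha)\,|\Re(\lambda_n)|^{-1}$ with $c(\alpha)=|\lambda_{\max}|/(|\lambda_{\max}|-|\alpha|)$ is uniform in $\mu=\Im(z)$ for free. The trade-off is sharpness: your factor $c(\alpha)(Z+Wr)$ is a uniform overestimate of the paper's $a$-dependent quantity in~\eqref{eq:condLs}, so the paper's version (as observed in its subsequent remark) can yield a slightly larger certified spectral gap, whereas yours gives an explicit but conservative one. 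Your closing caveats --- that $(zI-\L)^{-1}D\gamma(u^\stat)$ must indeed extend to a bounded operator on $\ell^1_\nu$, and that $D\gamma(u^\stat)$ is $\L$-bounded with relative bound zero so that $DF(u^\stat)$ is closed on $D(\L)$ --- are the right technical points to flag; the paper handles the first implicitly through the same $\ell^1$ column estimate and the second by simply invoking compactness of the resolvent, so neither is an actual gap in your argument.
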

\begin{proof}
First note that $DF(u^\stat)$ has compact resolvent, hence its spectrum is only composed of eigenvalues. Recalling that the eigenvalues of $\L$ are denoted $\left(\lambda_n\right)_{n\in\ZZ}$, we consider 
\begin{align*}
\ul\bydef \max_{n\in\ZZ} \Re(\lambda_n)<0.
\end{align*}
For any $\alpha\in(\ul,0)$, we know that the spectrum of $\L$ is to the left of $\alpha$, and we are going to prove, by a simple homotopy argument, that the same is still true for $DF(u^\stat)$ provided $\alpha<0$ is small enough. More precisely, we consider, for all $s\in[0,1]$,
\begin{align*}
\L_s \bydef \L + s \left(DF(u^\stat)) - \L \right),
\end{align*}
and show that, for some $\alpha\in(\ul,0)$ and for all $s\in[0,1]$, $\L_s$ has no eigenvalue is the vertical strip $\S_\alpha \bydef \{z\in\CC,\ \vert \Re(z)\vert \in[\alpha,0]\}$. Once this is proven, since all the eigenvalues of $\L_0 = \L$ are to the left of that strip, by upper semicontinuity of the spectrum~\cite[IV.3, Theorem 3.16]{Kat13} we will know that the same is true for $\L_1 = DF(u^\stat)$.

We are going to prove the existence of $\alpha<0$ such that, for all $a\in[\alpha,0]$, all $\mu\in\RR$ and all $s\in[0,1]$, $\L_s - (a+i\mu)I$ is invertible, thereby showing that $\L_s$ has no eigenvalue in $\S_\alpha$. We first rewrite
\begin{align*}
\L_s - (a+i\mu)I &= \L - (a+i\mu)I + s \left(DF(u^\stat)) - \L \right)\\
&= \left(\L - (a+i\mu)I\right) \left(I + s \left(\L - (a+i\mu)I\right)^{-1} \left(DF(u^\stat)) - \L \right)\right).
\end{align*}
Since $DF(u^\stat) - \L=D\gamma(u^\stat)$, it will be sufficient to prove that 
\begin{align*}
\left\Vert \left(\L - (a+i\mu)I\right)^{-1} D\gamma(u^\stat)\right\Vert_{B(\ell^1_\nu,\ell^1_\nu)}<1,
\end{align*}
in order to get that $\L_s - (a+i\mu)I$ is invertible for all $s\in[0,1]$. To that end, we introduce for all $a\in[\alpha,0]$,
\begin{align*}
B(a)\bydef \vert Q\vert  \left\vert\left(\Re(\Lambda)-a\right)^{-1}\right\vert, \qquad\text{and}\qquad \chi^{(j)}_{\NL}(a) \bydef \sup\limits_{\vert n\vert >\NL} \frac{\vert n\vert^j}{\vert \Re(\lambda_n) - a \vert} \quad\text{for }j=0,\ldots,2J-1,
\end{align*}
generalizing the quantities $B$ and $\chi^{(j)}_{\NL}$ used in Section~\ref{sec:infinity_bounds} for $a=0$. 
We then get 
\begin{align*}
\left\Vert \left(\L - (a+i\mu)I\right)^{-1} D\gamma(u^\stat) \right\Vert_{B(\ell^1_\nu,\ell^1_\nu)} &\leq \left\Vert \vert Q\vert \, \left\vert\left(\Lambda - (a+i\mu)\right)^{-1}\right\vert\, \left\vert Q^{-1} D\gamma(u^\stat) \right\vert \right\Vert_{B(\ell^1_\nu,\ell^1_\nu)} \\
&= \left\Vert B(a) \left\vert Q^{-1} D\gamma(u^\stat) \right\vert \right\Vert_{B(\ell^1_\nu,\ell^1_\nu)} \\
&\leq \left\Vert B(a) \left\vert Q^{-1} D\gamma(\bu) \right\vert \right\Vert_{B(\ell^1_\nu,\ell^1_\nu)} \\
&\qquad + \left\Vert B(a) \left\vert Q^{-1} \left(D\gamma(u^\stat) - D\gamma(\bu)\right) \right\vert \right\Vert_{B(\ell^1_\nu,\ell^1_\nu)},
\end{align*}
and proceeding as in Section~\ref{sec:infinity_bounds}, we end up with
\begin{align}
& \left\Vert \left(\L - (a+i\mu)I\right)^{-1} D\gamma(u^\stat) \right\Vert_{B(\ell^1_\nu,\ell^1_\nu)}  \nonumber \\
& \hspace*{3cm} \leq \max\left( \left\Vert B(a)\Gamma \, \Pi^{\leq \bN+\NL} \right\Vert_{B(\ell^1_\nu,\ell^1_\nu)}, \sum_{j=0}^{2J-1} \left\vert \beta^{(j)}\right\vert \chi^{(j)}_{\NL}(a)\right)  \nonumber \\
& \hspace*{4cm} + \sum_{j=0}^{2J-1} \left\Vert B(a) \, \vert\D^j\vert\right\Vert_{B(\ell^1_\nu,\ell^1_\nu)}   \left \vert \left( g^{(j)} \right)'' \right \vert \left( \left\Vert \bar u \right\Vert_{\ell^1_\nu} + \rstar \right) \left\Vert u^\stat - \bu \right\Vert_{\ell^1_\nu}.  \label{eq:condLs}
\end{align}
For $a=0$,
\begin{align*}
\left\Vert \left(\L - i\mu I\right)^{-1} D\gamma(u^\stat) \right\Vert_{B(\ell^1_\nu,\ell^1_\nu)} \leq Z + Wr < 1,
\end{align*}
and by continuity there exists $\alpha>0$ such that 
\begin{align*}
\left\Vert \left(\L - (a+i\mu)I\right)^{-1} D\gamma(u^\stat) \right\Vert_{B(\ell^1_\nu,\ell^1_\nu)} & < 1 \qquad \text{for all }a\in[-\alpha,0]. \qedhere
\end{align*}
\end{proof}
\begin{remark}
The proof of Theorem~\ref{thm:stability} provides us with a straightforward way of getting an explicit spectral gap. Indeed, the right-hand-side of~\eqref{eq:condLs} is computable, and decreasing with respect to $a$, therefore one could just find an explicit $a<0$ for which this expression is still less than $1$, and we then get that Theorem~\ref{thm:stability} holds with $\alpha=a$.
\end{remark}

\subsection{The case of multiple domains}
\label{sec:infinity_DD}

In Sections~\ref{sec:infinity_onedomain} to~\ref{sec:steadystate}, we only deal with a single integration step from $\tau$ to $\infty$, assuming the initial data was already close to a stable steady state. However, this final integration step can also easily be incorporated in the setup with multiple subdomains introduced in Section~\ref{sec:domaindecomposition}. That is, we can take $\tau_M=\infty$  in~\eqref{eq:subdivision}, and, provided $\bu^{(M)}$ is taken time-independent, use the same piece-wise construction of $\L$ as in Section~\ref{sec:setup_DD}, leading to the fixed point operator $T^{\pprod}$ defined in~\eqref{eq:defTprod}-\eqref{eq:deftheta}.

The corresponding $Y$, $Z$ and $W$ estimates are then obtained as in Section~\ref{sec:Ybounds_DD} to Section~\ref{sec:Wbounds_DD}, with the small modifications for the last domain introduced in Section~\ref{sec:infinity_bounds}. All the results of Section~\ref{sec:steadystate} are still valid in that case because, if Theorem~\ref{thm:NewtonKantorovich} holds for $M>1$, then in particular we have that
\begin{align*}
Y^M + Z_M^M r^M + \frac{1}{2}W_{M,M}^{M} \left(r^M\right)^2 \leq r^{M}, \\
Z_M^M \eta^M + W^M_{M,M} r^M \eta^M < \eta^M,
\end{align*}
i.e., the conditions~\eqref{e:inequalities1_M1}-\eqref{e:inequalities2_M1} hold for the last integration step by itself as assumed in Section~\ref{sec:steadystate}.

\section{Applications}
\label{sec:examples}

In this section, we present some applications of the validation procedure introduced in this paper, and discuss the obtained results. First, in Section~\ref{sec:SH}, we consider the Swift--Hohenberg equation, and revisit a specific orbit already validated in~\cite{BerBreShe24}. We show that the approach proposed in this paper significantly reduces the cost of the proof.
Next, we consider the Ohta--Kawasaki equation in Section~\ref{sec:OK}, for which we rigorously integrate a solution which passes through some kind of meta-stable state all the way to $t=+\infty$. Finally, we focus on the Kuramoto--Sivashinsky equation in Section~\ref{sec:KS}, for which we rigorously integrate near a periodic orbit. All the code used for the computer-assisted parts of the proofs is available at~\cite{integratorcode_new}.

\subsection{The Swift--Hohenberg equation}
\label{sec:SH}

We first consider the Swift--Hohenberg equation on a torus $\TT_L = \RR/L$ of size $L>0$:
	\begin{align}
		\label{eq:SH}
		\begin{cases}
			\dfrac{ \partial u }{ \partial t} = 
			- \left(\dfrac{ \partial^{2} }{ \partial x^{2}} +1\right)^2 u +\alpha u - u^3 ,
			& (t,x) \in (0,\tend] \times \TT_L, \\[2ex]
			u (0, x) = \uin ( x ), & x \in \TT_L,
	\end{cases}
	\end{align}
which is a well-known parabolic PDE leading to pattern formation. It was already used as an example in~\cite[Theorem 1.5]{BerBreShe24}, in which the following result was obtained.
\begin{theorem}
\label{th:SH}
Consider the Swift--Hohenberg equation~\eqref{eq:SH} with $\alpha = 5$, $L=6\pi$, $\tend=3/2$, and $\uin(x) = 0.4\cos\left(\frac{2\pi x}{L}\right)-0.3\cos\left(\frac{4 \pi x}{L}\right)$. Let $\bu=\bu(t,x)$ be the function represented in Figure~\ref{fig:SH}, and whose precise description in terms of Fourier-Chebyshev coefficients can be downloaded at~\cite{integratorcode}. Then, there exists a smooth solution $u$ of~\eqref{eq:SH} such that
\begin{align*}
\sup_{t\in [0,\tend]} \sup_{x\in \TT_L} \vert u(t,x)-\bu(t,x) \vert \leq 
4\times 10^{-8}.
\end{align*}
\end{theorem}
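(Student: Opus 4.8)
The plan is to reduce the Swift--Hohenberg equation~\eqref{eq:SH} to an instance of~\eqref{eq:PDE} and then run the domain-decomposition scheme of Section~\ref{sec:domaindecomposition}. After rescaling space (and time, to normalize the leading coefficient) so that the torus becomes $\TT = \RR/(2\pi\ZZ)$, and expanding $-(\partial_x^2+1)^2 = -\partial_x^4 - 2\partial_x^2 - 1$, the right-hand side takes the form~\eqref{eq:PDE} with $J=2$, leading term $(-1)^{J+1}\partial_x^4 u = -\partial_x^4 u$, and lower-order polynomial terms $g^{(0)}(u) = a u - b u^3$, $g^{(2)}(u) = c u$, $g^{(1)} = g^{(3)} = 0$ for explicit real constants $a,b,c$. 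Since $\uin$ is even and evenness is preserved by~\eqref{eq:SH}, I would, following Remark~\ref{rem:symmetry}, carry out the whole construction on the cosine subspace, which roughly halves the number of unknowns; and since $\uin$ is a trigonometric polynomial, the tail parameter $\epsilon^{\mathrm{in}}$ from Section~\ref{sec:Ybounds_DD} may be taken equal to $0$. (This merely re-proves \cite[Theorem~1.5]{BerBreShe24}, the point being that the present method does so far more cheaply.)

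Next, using a standard numerical integrator I would produce the piecewise-polynomial-in-time, finite-Fourier approximate solution $\bu \in \Pi^{\leq \Nu}\X_\nu^\pprod$ appearing in the statement (with coefficients available at~\cite{integratorcode}) on a subdivision $0 = \tau_0 < \cdots < \tau_M = 3/2$, and on each subinterval build the constant operator $\L^{(m)}$ approximating $DF(\bu^{(m)})$ as in Section~\ref{sec:setup_DD}: compute the time-averaged multipliers $\bv^{(m,j)}$, form $\tL^{(m)}$, numerically (near-)diagonalize its $\Pi^{\leq \NL}$ block as $Q_{\NL}^{(m)}\Lambda_{\NL}^{(m)}(Q_{\NL}^{(m)})^{-1}$, and define the high-mode multipliers $\lambda_n^{(m)} = -n^{2J} + \sum_{j} (in)^j \bv^{(m,j)}_0$ for $\vert n\vert > \NL$. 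This yields the operator $T^\pprod$ of~\eqref{eq:defTprod}--\eqref{eq:deftheta}, whose fixed points are continuous in time and solve~\eqref{eq:PDE}.

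Then, with interval arithmetic, I would evaluate the bounds $Y^{(m)}$, $Z^{(m)}_i$, $W^{(m)}_{ij}$ according to Sections~\ref{sec:Ybounds_DD}--\ref{sec:Wbounds_DD}: each $Y^{(m)}$ via the interpolation procedure of Section~\ref{sec:interp} applied to the residual, using the improved splitting~\eqref{eq:splittingnew}; each $Z^{(m)}_i$ as a finite $\ell^1_\nu$ operator norm of $B^{(m,i)}\Gamma^{(i)}$ together with the explicit tail quantities $\chi^{(i,j)}_{\NL}$ and $\mu^{(m,i)}_{\NL}$; and each $W^{(m)}_{ij}$ from the diagonal formula involving $\left\Vert B^{(m,i)}\vert\D^j\vert\right\Vert_{B(\ell^1_\nu,\ell^1_\nu)}$ and $\vert(g^{(j)})''\vert$. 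One then searches for $r, \eta \in \RR_{>0}^M$ satisfying~\eqref{e:inequalities1}--\eqref{e:inequalities2}; by Theorem~\ref{thm:NewtonKantorovich} a successful check produces a unique fixed point $u$ of $T^\pprod$ in $\bbox(\bu, r)$, which glues to a solution of~\eqref{eq:PDE}, is analytic in $x$ since $\nu > 1$, and is smooth by parabolic regularity. Finally $\sup_{t,x}\vert u(t,x) - \bu(t,x)\vert \leq \max_m \left\Vert u^{(m)} - \bu^{(m)}\right\Vert_{\X_\nu^{(m)}} \leq \max_m r^m$, and, undoing the spatial rescaling, $\max_m r^m$ is the numerically obtained value $4\times 10^{-8}$.

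The analysis being entirely in place, the real work is the computer-assisted parameter search that makes~\eqref{e:inequalities1}--\eqref{e:inequalities2} hold over all of $[0,3/2]$: one must fix the weight $\nu > 1$, the Fourier truncations $\Nu$ and $\NL$, the per-subdomain polynomial orders $K^{(m)}$ and interpolation orders $\tilde{K}^{(m)}$, and, above all, a subdivision that is fine enough and well-placed --- mindful of the conditioning issues of Remark~\ref{rem:diagonalization} --- so that every diagonal entry $Z^{(m)}_m$ is strictly below $1$, which (since $T^\pprod$ is lower triangular) is what drives~\eqref{e:inequalities2}, while keeping the proof affordable; this is exactly the adaptive procedure of Section~\ref{sec:parameters}. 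The subtlest point is keeping the off-diagonal $Z^{(m)}_i$ under control, since they involve the long products of propagators $\kL^{(m,i)}$ which can amplify the conditioning of the individual near-diagonalizations; pre-multiplying the factors $(Q^{(j)})^{-1}Q^{(j-1)}$, as recommended in Section~\ref{sec:Zbounds_DD}, mitigates this.
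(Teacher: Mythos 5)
Your plan is a correct blueprint, but it is not what the paper uses to prove Theorem~\ref{th:SH}. The paper imports this statement verbatim from \cite[Theorem~1.5]{BerBreShe24} and does not re-prove it; the underlying proof there is the older Newton--Kantorovich scheme in which one passes to the zero-finding problem $\tilde F(u)=u-T(u)$, builds an approximate inverse $A$ of $D\tilde F(\bu)$, contracts $\tT(u)=u-A\tilde F(u)$, and splits the norm into a ``finite'' (projected-in-time) part and a ``tail'' part. What you describe instead is the machinery of the present paper --- the operator $T^{\pprod}$ built from a piecewise-constant $\L$ and contracted directly, with no approximate inverse and no split-in-time norm --- which is precisely the proof the paper gives for Theorem~\ref{th:SH_new} (same equation, same $\bu$, bound $2\times 10^{-9}$). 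You half-acknowledge this in your parenthetical remark, but then attribute the value $4\times 10^{-8}$ to the method: that number comes from the old proof, while the new one gives $2\times 10^{-9}$, which of course implies the stated bound a fortiori. The trade-off is exactly the point of Section~\ref{sec:SH}: the old method constructs and stores a matrix of size on the order of $\Nu M K\times \Nu M K$, whereas your (and the paper's new) approach needs only vectors of that length together with $2M$ matrices of size roughly $\NL\times\NL$, is reported to be about $25$ times faster with about $50$ times less memory, and yields sharper bounds because the lossy split-in-time estimates are avoided. Aside from the misattributed numerical value, your outline (reduction to~\eqref{eq:PDE} with $J=2$ after rescaling, restriction to the cosine subspace per Remark~\ref{rem:symmetry}, $\epsilon^{\mathrm{in}}=0$, the $Y$/$Z$/$W$ bounds, the lower-triangular structure of $Z$, the role of $\kL^{(m,i)}$ and the $(Q^{(j)})^{-1}Q^{(j-1)}$ preconditioning, and the adaptive grid of Section~\ref{sec:parameters}) faithfully follows Sections~\ref{sec:domaindecomposition}--\ref{sec:parameters}.
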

\begin{figure}
\centering
\includegraphics[width=0.49\linewidth]{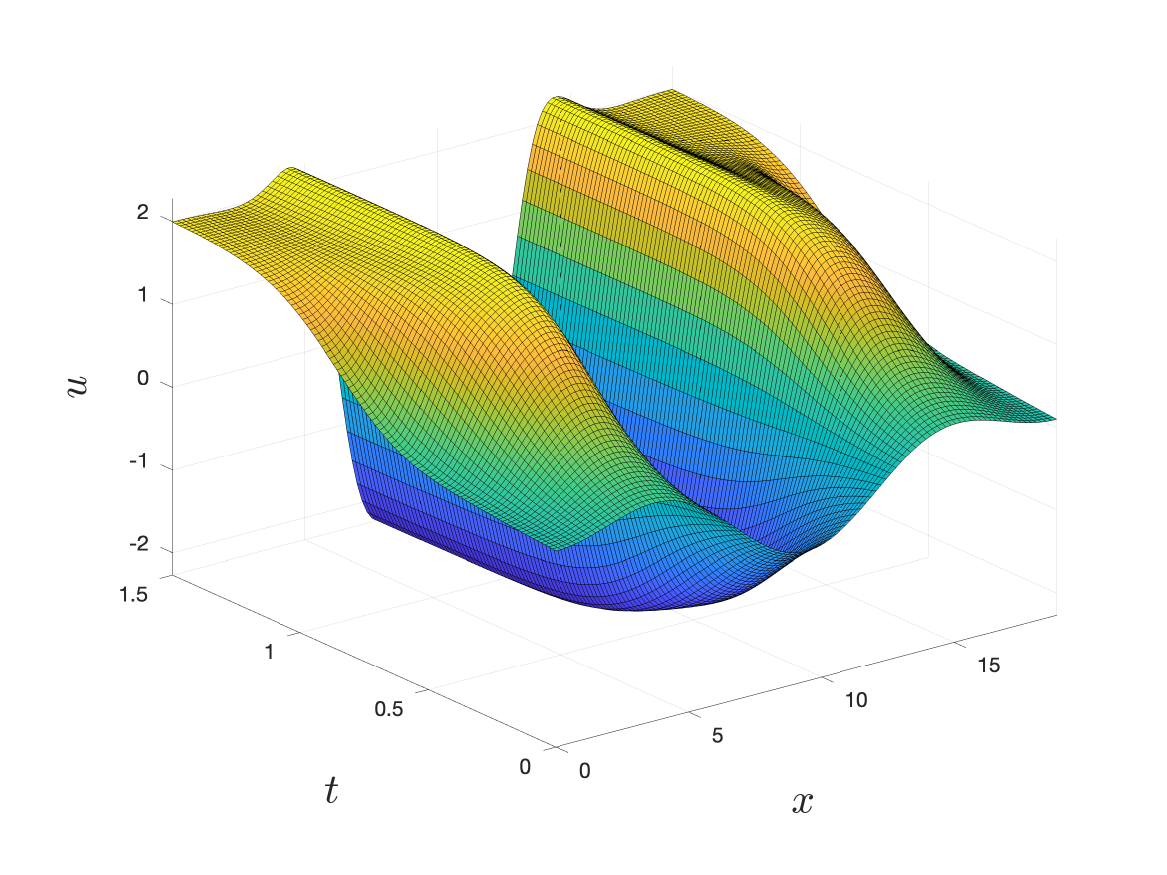}
\hfill
\includegraphics[width=0.49\linewidth]{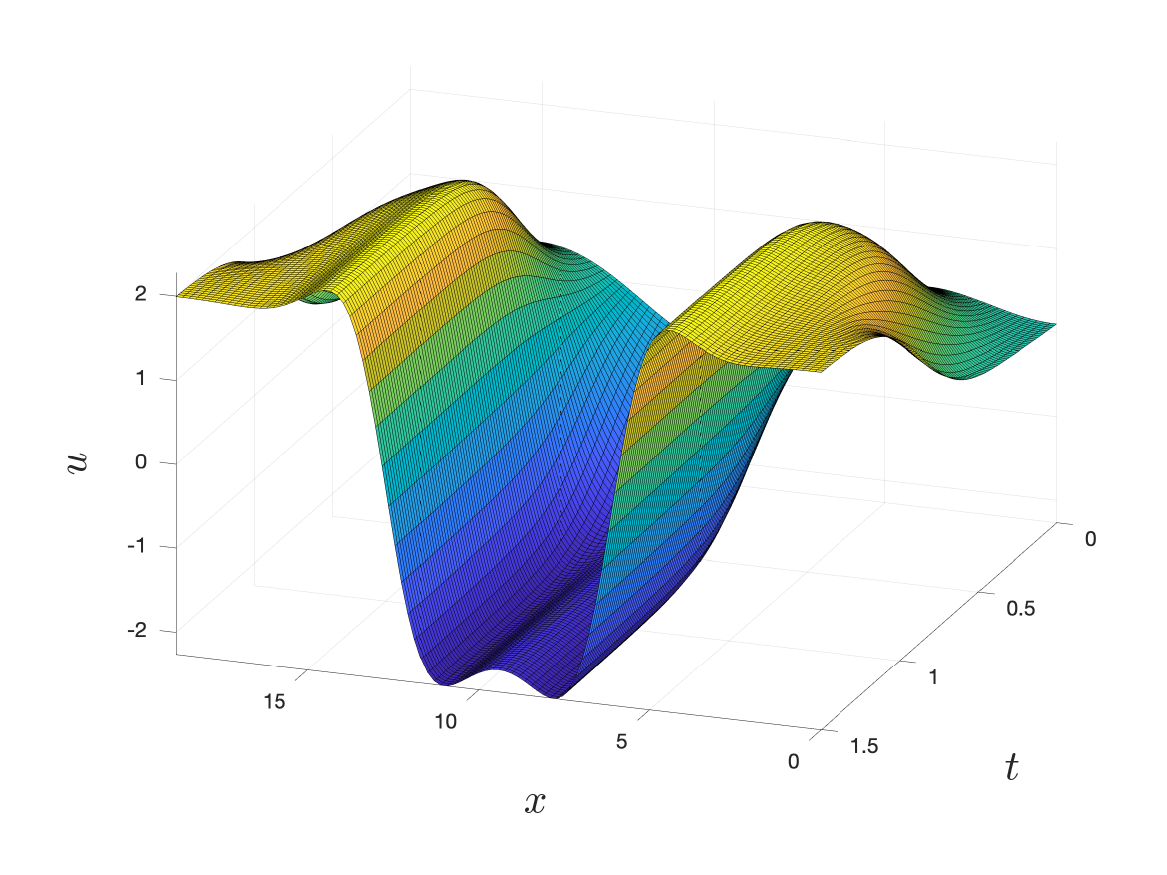}
\caption{The approximate solution $\bu$ of the Swift--Hohenberg equation~\eqref{eq:SH}, which has been validated in Theorem~\ref{th:SH}, depicted twice with different views.}
\label{fig:SH}
\end{figure}

We first show that, as claimed in the introduction, the approach introduced in the current paper is much more efficient than the one from~\cite{BerBreShe24}, as the removal of the approximate inverse allows for a much cheaper proof, while providing comparable error estimates. Indeed, keeping exactly the same approximate solution $\bu$ as in~\cite{BerBreShe24} (with $\Nu=30$ Fourier modes, $M=100$ subdomains, and polynomials of order $K=5$ in time on each subdomain) but using the estimates from Section~\ref{sec:domaindecomposition}, we obtain the following result.
\begin{theorem}
\label{th:SH_new}
Consider the Swift--Hohenberg equation~\eqref{eq:SH} with $\alpha = 5$, $L=6\pi$, $\tend=3/2$, and $\uin(x) = 0.4\cos\left(\frac{2\pi x}{L}\right)-0.3\cos\left(\frac{4 \pi x}{L}\right)$, as in Theorem~\ref{th:SH}, and let again $\bu=\bu(t,x)$ be the function represented in Figure~\ref{fig:SH}, and whose precise description in terms of Fourier-Chebyshev coefficients can be downloaded at~\cite{integratorcode}. Then, there exists a smooth solution $u$ of~\eqref{eq:SH} such that
\begin{align*}
\sup_{t\in [0,\tend]} \sup_{x\in \TT_L} \vert u(t,x)-\bu(t,x) \vert \leq 
2\times 10^{-9}.
\end{align*}
\end{theorem}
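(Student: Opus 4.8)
The proof follows Section~\ref{sec:domaindecomposition} step by step; the only new ingredient is the interval-arithmetic verification that the resulting bounds satisfy the hypotheses of Theorem~\ref{thm:NewtonKantorovich} with a radius at most $2\times 10^{-9}$. First I would rescale space and time so that~\eqref{eq:SH} takes the form~\eqref{eq:PDE} on $\TT=\RR/(2\pi\ZZ)$: it becomes a fourth-order ($J=2$) semilinear parabolic equation with polynomial nonlinearities ($g^{(1)}=g^{(3)}=0$, $g^{(2)}$ linear, $g^{(0)}$ cubic). Since $\uin$ is even and~\eqref{eq:SH} preserves evenness, I restrict the entire analysis to the cosine subspace, as in Remark~\ref{rem:symmetry}; moreover $\uin$ is a trigonometric polynomial, so the initial tail bound $\epsilon^{\mathrm{in}}$ may be taken equal to $0$. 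The approximate solution $\bu$ of~\cite{BerBreShe24} is kept unchanged, so the subdivision $0=\tau_0<\dots<\tau_{100}=\tend$, the Fourier truncation $\Nu=30$, and the time-degree $K=5$ on each subdomain are inherited; the auxiliary parameters — the decay rate $\nu>1$, the truncation level $\NL$ of the linear part, and the interpolation degrees entering~\eqref{eq:C0interp} — are chosen as in Section~\ref{sec:parameters} and Section~\ref{sec:interp}. On each subinterval I build $\bv^{(m,j)}$, the intermediate operator $\tL^{(m)}$, a numerical near-diagonalization $\Pi^{\leq\NL}\tL^{(m)}\Pi^{\leq\NL}\approx Q_{\NL}^{(m)}\Lambda_{\NL}^{(m)}(Q_{\NL}^{(m)})^{-1}$, and finally $\L^{(m)}$, exactly as in Section~\ref{sec:setup_DD}, which defines the fixed point map $T^{\pprod}$ of~\eqref{eq:defTprod}--\eqref{eq:deftheta}.

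\paragraph{The three bounds.} I would then compute $Y^{(m)}$, $Z^{(m)}_i$ and $W^{(m)}_{i,k}$. For $Y^{(m)}$ I follow Section~\ref{sec:Ybounds_DD}, using the time-average splitting around $\bU^{(m)}$ of Remark~\ref{rem:splitting} together with the Chebyshev interpolation-error control of Section~\ref{sec:interp} to obtain a sharp, rigorously enclosed bound on the residual $T^{(m)}(\bu)-\bu^{(m)}$ in $\X_\nu^{(m)}$; the term $\mu^{(m,0)}_{\Nin}\epsilon^{\mathrm{in}}$ drops out since $\epsilon^{\mathrm{in}}=0$. For $Z^{(m)}_i$ I follow Section~\ref{sec:Zbounds_DD}: assemble the operators $B^{(m,i)}$ — built from the $Q^{(j)}$, the transition products $\kL^{(m,i)}$ (computed after extracting the nearly-identity factors $(Q^{(j)})^{-1}Q^{(j-1)}$), and the real parts of the $\lambda_n^{(j)}$ — together with $\Gamma^{(i)}$, then take the maximum of the finite-matrix operator norm of $B^{(m,i)}\Gamma^{(i)}\Pi^{\leq\bN+\NL}$ and the explicit tail term $\mu^{(m,i)}_{\NL}\sum_{j}\Vert\beta^{(i,j)}\Vert_{\ell^1_\nu}\chi^{(i,j)}_{\NL}$. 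For $W^{(m)}_{i,k}$ I follow Section~\ref{sec:Wbounds_DD}, controlling the quadratic remainder of $D\gamma^{(m)}$ with Lemma~\ref{lem:abs_pol}. Each of these is a finite computation (finitely many Fourier modes, low-degree polynomial nonlinearity), carried out with Intlab~\cite{Rum99}.

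\paragraph{Contraction and error bound.} It remains to produce $r,\eta\in\RR_{>0}^{100}$ with $r\le\rstar$ satisfying~\eqref{e:inequalities1}--\eqref{e:inequalities2}. Since $T^{\pprod}$ is lower triangular, the matrix $Z=(Z^{(m)}_i)$ is lower triangular, so up to the small $W$-contributions condition~\eqref{e:inequalities2} is solvable precisely when $\max_m Z^{(m)}_m<1$; the subdivision inherited from~\cite{BerBreShe24} turns out to be fine enough that the sharper estimates of the present method achieve this, and one then propagates a candidate $r$ through~\eqref{e:inequalities1} subdomain by subdomain. Theorem~\ref{thm:NewtonKantorovich} yields a unique fixed point $u=(u^{(1)},\dots,u^{(100)})\in\bbox(\bu,r)$ of $T^{\pprod}$; as observed after~\eqref{eq:deftheta}, its components glue into a single $u\in\X_\nu$ that is a fixed point of $T$ from~\eqref{eq:T_general}, hence a solution of the rescaled~\eqref{eq:PDE}. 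Undoing the rescaling and using parabolic regularity ($\nu>1$ gives analyticity in $x$, cf.\ Section~\ref{sec:preliminaries_Xnu}) produces a smooth solution $u$ of~\eqref{eq:SH}. Finally, since $\sup_{t\in[\tau_{m-1},\tau_m]}\sup_{x}\vert u(t,x)-\bu(t,x)\vert\le\Vert u^{(m)}-\bu^{(m)}\Vert_{\X_\nu^{(m)}}\le r^{(m)}$ and the change of variables is an isometry for the supremum norm,
\[
\sup_{t\in[0,\tend]}\ \sup_{x\in\TT_L}\ \vert u(t,x)-\bu(t,x)\vert\ \le\ \max_{1\le m\le 100} r^{(m)},
\]
and the interval-arithmetic run (code at~\cite{integratorcode_new}) certifies this maximum to be at most $2\times 10^{-9}$.

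\paragraph{Main obstacle.} The delicate step is the $Z$ bound: the norms $\Vert B^{(m,i)}\Gamma^{(i)}\Vert_{B(\ell^1_\nu,\ell^1_\nu)}$ degrade whenever some near-diagonalization $Q^{(m)}$ is badly conditioned, so that $(Q_{\NL}^{(m)})^{-1}$ is large (Remark~\ref{rem:diagonalization}), or whenever a subinterval is too long relative to the variation of $\bu$ on it. Keeping all diagonal entries $Z^{(m)}_m<1$ while simultaneously controlling the off-diagonal $Z^{(m)}_i$ — through the growth factors $\mu^{(m,i)}_{\NL}$ and the transition operators $\kL^{(m,i)}$ — is exactly what the construction and the grid strategy of Section~\ref{sec:parameters} are designed to achieve; that the estimates here reduce to sharp weighted-$\ell^1_\nu$ operator norms in Fourier space is also why the certified error improves on the $4\times 10^{-8}$ of~\cite{BerBreShe24}. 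A more technical point is obtaining sufficiently sharp rigorous enclosures of the Chebyshev interpolation errors feeding $Y^{(m)}$, since the functions involved contain matrix exponentials $e^{t\Lambda^{(m)}}$; the splitting of Remark~\ref{rem:splitting} is what keeps that tractable.
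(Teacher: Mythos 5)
Your proposal is correct and reconstructs the same approach the paper takes: inherit the approximate solution, subdivision $M=100$, $\Nu=\NL=30$, $K=5$ from~\cite{BerBreShe24}, apply the new $Y$, $Z$, $W$ estimates of Section~\ref{sec:domaindecomposition} in the even (cosine) subspace with $\epsilon^{\mathrm{in}}=0$, and verify the hypotheses of Theorem~\ref{thm:NewtonKantorovich} by interval arithmetic. The paper's own proof is exactly this, delegated entirely to running \runfile~at~\cite{integratorcode_new}; the one small caveat to note is that for this particular theorem the grid is deliberately \emph{not} re-optimized via Section~\ref{sec:parameters} (to keep the comparison with~\cite{BerBreShe24} fair), so the improvement over $4\times10^{-8}$ comes solely from the sharper estimates and removal of the approximate inverse, as you correctly observe in your ``Contraction and error bound'' paragraph.
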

While the statements of Theorem~\ref{th:SH} and Theorem~\ref{th:SH_new} are identical except for a slightly different error bound, the key difference between the two is in their respective proofs (the proof of Theorem~\ref{th:SH_new} can be reproduced by running \runfile ~from~\cite{integratorcode_new}). Indeed the proof of Theorem~\ref{th:SH} is based on the setup from~\cite{BerBreShe24}, and required the computation of a matrix whose size is of the order or $\Nu M K\times\Nu M K$ for the approximate inverse. On the other hand, the larger objects that are required during the proof of Theorem~\ref{th:SH_new} are merely vectors whose size is of order $\Nu M K$ (for representing $\bu$, $\gamma(\bu)$, etc), and $2M$ matrices whose size is of the order of $\NL\times\NL$ (the matrices $Q^{(m)}$ and $\left(Q^{(m)}\right)^{-1}$ involved in the piece-wise definition of $\L$). Here we used $\NL = 30$, because this is what was done in~\cite{BerBreShe24} in which we always took $\Nu=\NL$ for simplicity, but $\NL$ could in fact be taken lower without much influence on the proof. To give an idea of the difference in efficiency, the proof using the setup from the current paper is roughly a factor 25 faster, using a factor 50 less memory and a factor 5 fewer cpus (but of course this depends on the hardware architecture).

It should also be noted that the removal of the approximate inverse not only leads to a much cheaper proof, but in fact also to smaller error bounds, as the setup introduced in this paper allows for sharper estimates compared to~\cite{BerBreShe24}. Moreover, we choose here to keep exactly the same subdomains and other computational parameters as in~\cite{BerBreShe24} in order to have a fair comparison, but using the strategies described in Section~\ref{sec:parameters} in order to adaptively select different subdomains and parameters one each subdomain would yield an even more efficient proof. We do take advantage of these strategies for the remaining examples, which were far out of reach of the methodology in~\cite{BerBreShe24}.

\subsection{The Ohta--Kawasaki equation}
\label{sec:OK}

We now come briefly back to the example of the Ohta-Kawasaki equation presented in Theorem~\ref{th:OK}, which has several notable features. In particular, as shown on Figure~\ref{fig:OK}, there is a kind of metastable regime during which the solution does not evolve much, before eventually converging to significantly different state. This metastable behaviour is not unexpected (see, e.g.~\cite{ChoMarWil11,CyrWan18,JohSanWan13,Wan16}), but it posed challenges from a numerical (and rigorous) integration perspective. In particular, if one had stopped the numerical simulation around $t=10$, one could have thought that the solution had converged towards a stable equilibrium, even though this is not at all the case. If Figure~\ref{fig:OK} was merely a numerical simulation, one could wonder whether a similar phenomenon might not occur again later on, i.e., whether the solution has in fact converged toward a stable equilibrium or not. However, Theorem~\ref{th:OK} provides us with a proof that the solution indeed converges to a steady state, which is very close to $u(\tau,\cdot)$, thanks to the analysis conducted in Section~\ref{sec:infinity}. 

We note that the PDE problem~\eqref{eq:OK} is posed with Neuman boundary conditions. Hence we double the domain size and work in the space of even periodic functions on this extended domain.
The solution in Figure~\ref{fig:OK} and Theorem~\ref{th:OK} is thus described by a cosine series with period $4\pi$ in space. As described in Remark~\ref{rem:symmetry}, the proof of integration to infinity in part relies on this even symmetry. This solution with domain restricted to $x\in[0,2\pi]$ then satisfies the Neuman boundary conditions. The stability statement of the limiting equilibrium within the class of even $4\pi$-periodic functions carries over to stability for the Neuman problem on $[0,2\pi]$. The additional symmetry $u(t,x)=u(t,2\pi-x)$, which is readily visible in Figure~\ref{fig:OK}, is not imposed or used in the proof. However, since the initial data satisfies this additional symmetry, it is conserved during the evolution.

\begin{figure}
\centering
\includegraphics[width=0.47\linewidth]{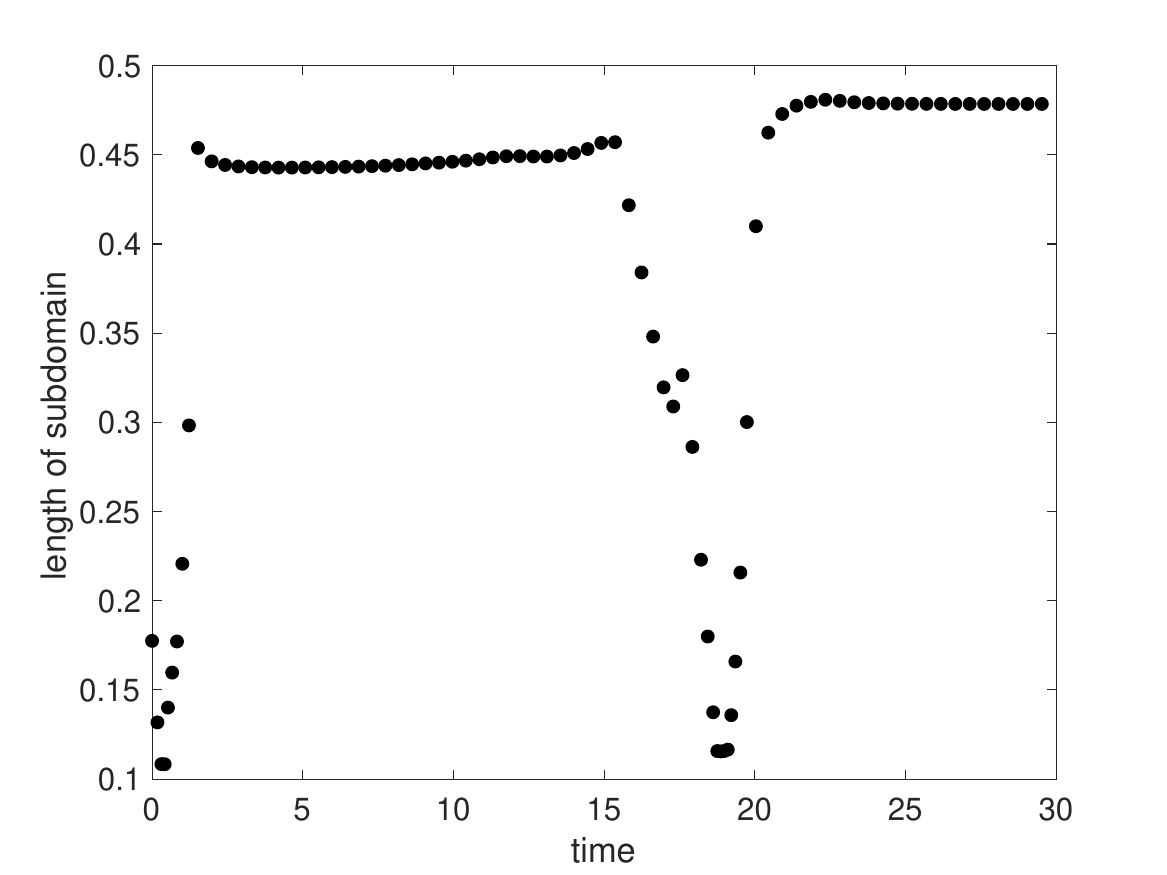}
\hfill
\includegraphics[width=0.47\linewidth]{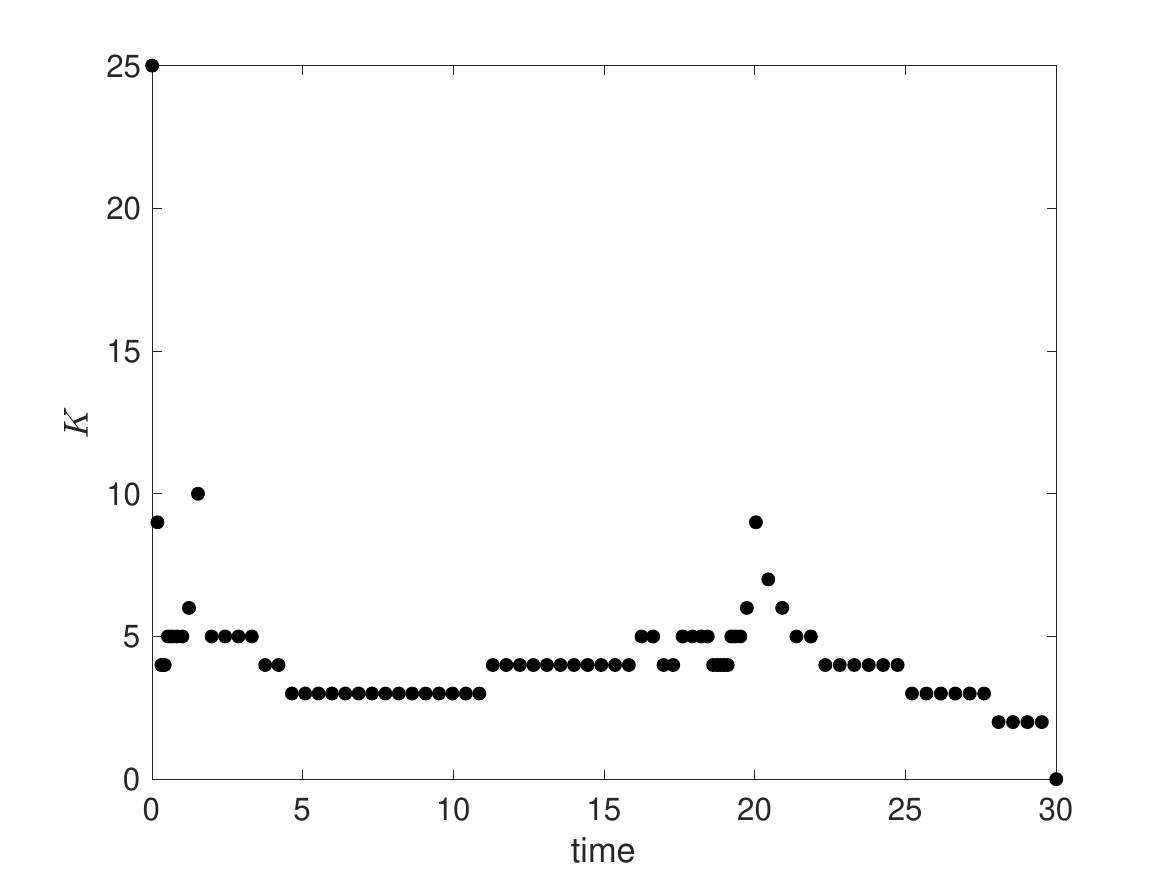}
\caption{Illustration of some of the important computational parameters used for the proof of Theorem~\ref{th:OK}. The length $\tau_{m+1}-\tau_{m}$ of each subdomain is shown on the left (except for the last one which has infinite length), and the Chebyshev order $K^{(m)}$ used for the approximate solution on each subdomain is shown on the right.}
\label{fig:OKdomainsandK}
\end{figure}

In order to prove Theorem~\ref{th:OK}, we used a non-uniform subdivision with $M=80$ subdomains represented on Figure~\ref{fig:OKdomainsandK}, obtained by following the procedure described in Section~\ref{sec:parameters}. The approximate solution $\bu$ has $\Nu=60$ Fourier modes, but the polynomial order in time $K^{(m)}$ depends on the subdomain, as also shown on Figure~\ref{fig:OKdomainsandK}. Figure~\ref{fig:OKdomainsandK} showcases that our adaptive subdomain algorithm behave as expected. In particular, when the solution does not evolve very rapidly, it automatically takes advantage of it by selecting longer time-steps. Once the subdomains are fixed, the values of $K^{(m)}$ selected and shown on Figure~\ref{fig:OKdomainsandK} are close to the minimal ones required to get each $Y^{(m)}$ below a given tolerance (here $10^{-8}$, when the $Y^{(m)}$ are computed non-rigorously, i.e., without interval arithmetic). For example, one may notice that the algorithm detects that we should select a relatively large value for $K^{(1)}$.

\subsection{The Kuramoto--Sivashinsky equation}
\label{sec:KS}

Finally, we consider the Kuramoto--Sivashinsky equation on the torus $\TT_L = \RR/L$ 
\begin{align}
		\label{eq:KS}
		\begin{cases}
			\dfrac{ \partial u }{ \partial t} = 
			-\dfrac{ \partial^{4} u}{ \partial x^{4}} -\dfrac{ \partial^{2} u}{ \partial x^{2}} -\displaystyle\frac{1}{2}\dfrac{ \partial }{ \partial x} u^2 ,
			& (t,x) \in (0,\tend] \times \TT_L, \\[2ex]
			u (0, x) = \uin ( x ), & x \in \TT_L,
	\end{cases}
\end{align}
where for our particular rigorous integration the initial data $\uin$ is  taken to be odd, a symmetry which is conserved by the PDE.
Compared to our previous examples, this equation contains nonlinear terms involving spatial derivatives, is not associated with a gradient flow, and is a well-known example of a parabolic PDE generating chaotic dynamics (as shown rigorously in~\cite{WilZgl20,WilZgl24}). 
The reason for the appearance of the number $\alpha$ in the next theorem will become apparent below. 

\begin{figure}
\centering
\includegraphics[scale=0.45]{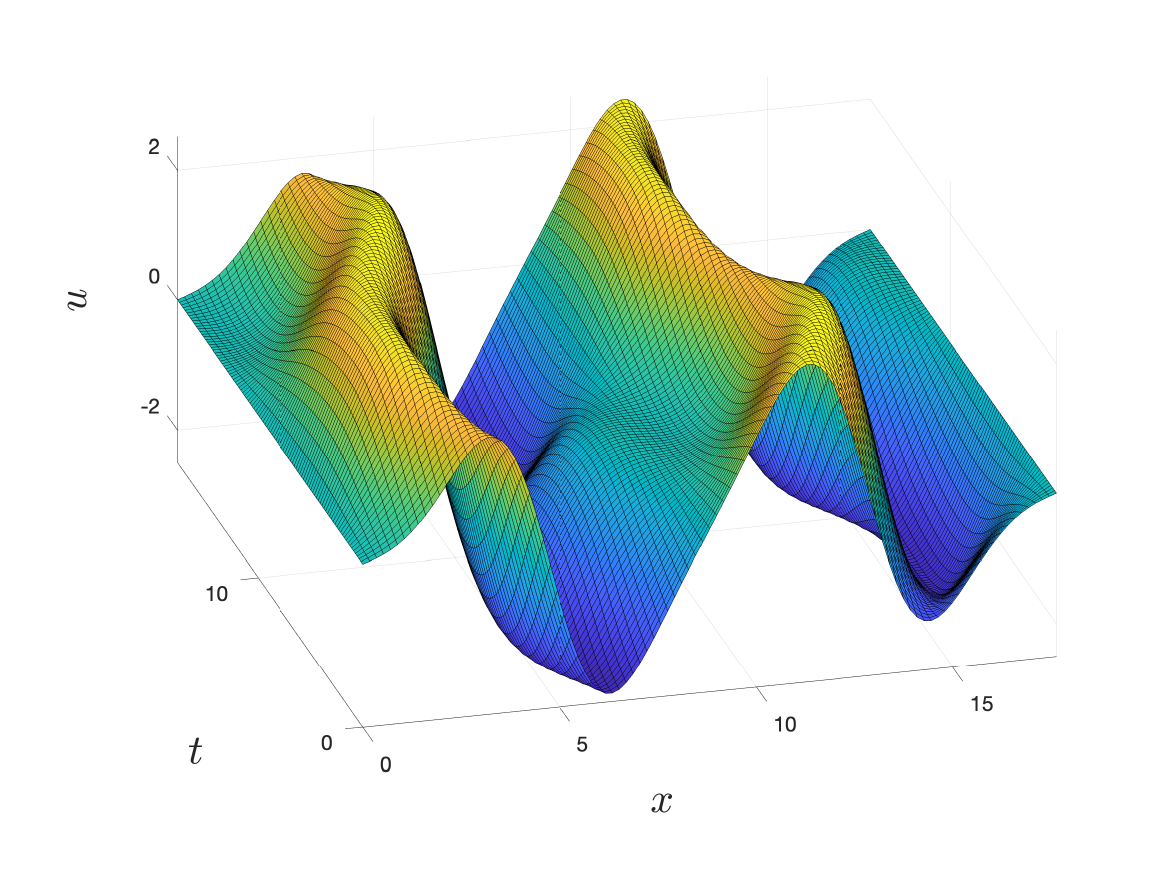}
\caption{The approximate solution $\bu$ of the Kuramoto--Sivashinsky equation~\eqref{eq:KS}, which has been validated in Theorem~\ref{th:KS}.}
\label{fig:KS}
\end{figure}

\begin{theorem}
\label{th:KS}
Let $\alpha=0.127$ and consider the Kuramoto–Sivashinsky equation~\eqref{eq:KS},
	with $L=2\pi\sqrt{\alpha}$, $\tend=2.245/\alpha$. Let $\bu=\bu(t,x)$ be the function represented in Figure~\ref{fig:KS}, and whose precise description in terms of Chebyshev$\times$Fourier coefficients can be downloaded at~\cite{integratorcode_new}, with corresponding initial data $\uin$ also available at~\cite{integratorcode_new}. Then, there exists a smooth solution $u$ of~\eqref{eq:KS} such that
\begin{align*}
\sup_{t\in [0,\tend]} \sup_{x\in \TT_L} \vert u(t,x)-\bu(t,x) \vert \leq 1\times 10^{-6}.
\end{align*}
\end{theorem}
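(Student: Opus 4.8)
The statement is an application of the rigorous integrator developed in Section~\ref{sec:onedomain} and Section~\ref{sec:domaindecomposition}, so the plan is to cast \eqref{eq:KS} into the abstract form, build a numerical approximate solution, and then verify the hypotheses of Theorem~\ref{thm:NewtonKantorovich} by a computer-assisted computation. First I would rescale the spatial variable by $y=x/\sqrt{\alpha}$, which turns $\TT_L$ with $L=2\pi\sqrt{\alpha}$ into the standard torus $\TT=\RR/(2\pi\ZZ)$, together with a rescaling of time chosen so that the fourth-order term acquires the coefficient $(-1)^{J+1}=-1$ with $J=2$. With these choices \eqref{eq:KS} becomes an instance of \eqref{eq:PDE} with $g^{(0)}=g^{(3)}=0$, $g^{(2)}(u)=-\alpha\,u$ and $g^{(1)}(u)=-\tfrac12\alpha^{3/2}u^2$; this is where the parameter $\alpha$ enters as a coefficient, which is why it shows up in the statement. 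Since $\uin$ is odd and \eqref{eq:KS} preserves oddness, I would restrict the analysis to the invariant subspace of sine series as in Remark~\ref{rem:symmetry} (which also removes the zero mode), fix a decay rate $\nu>1$ so that membership in $\ell^1_\nu$ yields spatial analyticity, and record an explicit tail bound $\epsilon^{\mathrm{in}}$ for $\uin$ (which may be taken $0$ if $\uin$ is a trigonometric polynomial).

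Next I would compute, non-rigorously, a piece-wise polynomial approximate solution $\bu\in\Pi^{\le\Nu}\X_\nu^{\pprod}$ on a subdivision $0=\tau_0<\cdots<\tau_M$ of the rescaled integration interval, shadowing the orbit of Figure~\ref{fig:KS}, with the subdivision, the per-subdomain Chebyshev orders $K^{(m)}$, the interpolation orders $\tilde K^{(m)}$ used to bound $C^0$-norms in time, and the truncation levels $\Nu$ and $\NL$ all chosen by the heuristics of Section~\ref{sec:parameters} (grid adapted so that the diagonal bounds $Z^{(m)}_m$ are roughly uniform, orders chosen as small as possible subject to each residual $Y^{(m)}$ lying below a prescribed tolerance). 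On each subdomain I would then build $\L^{(m)}$ as in Section~\ref{sec:setup_DD} — average $(g^{(j)})'(\bu^{(m)})$ over $[\tau_{m-1},\tau_m]$, form the intermediate operator $\tL^{(m)}$, approximately diagonalize its finite block, and keep the explicit diagonal entries $\lambda^{(m)}_n$ for $|n|>\NL$ — form the operator $T^{\pprod}$ of \eqref{eq:defTprod}, and compute in interval arithmetic the bounds $Y^{(m)}$ of Section~\ref{sec:Ybounds_DD} (using the sharpened splitting around the time-average $\bU^{(m)}$), $Z^{(m)}_i$ of Section~\ref{sec:Zbounds_DD}, and $W^{(m)}_{i,k}$ of Section~\ref{sec:Wbounds_DD}. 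It then suffices to exhibit $r,\eta\in\RR_{>0}^M$ with $r\le\rstar$ satisfying \eqref{e:inequalities1}--\eqref{e:inequalities2} for these data; since $Z$ is lower triangular this reduces to checking that its spectral radius is below $1$ and then solving the resulting scalar inequalities subdomain by subdomain. Theorem~\ref{thm:NewtonKantorovich} then provides a unique fixed point of $T^{\pprod}$ in $\bbox(\bu,r)$, which, as noted after \eqref{eq:deftheta}, is globally continuous in time and solves the rescaled \eqref{eq:PDE} with the prescribed initial data; since $\nu>1$ it is analytic in $x$, and parabolic smoothing gives smoothness in $(t,x)$. Finally $\sup_{t,x}|u-\bu|\le\max_{1\le m\le M}\|u^{(m)}-\bu^{(m)}\|_{\X_\nu^{(m)}}\le\max_m r^{(m)}$, a bound unaffected by the space--time rescaling, and verifying $\max_m r^{(m)}\le10^{-6}$ yields the theorem.

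The delicate step is the $Z$-bound. Because \eqref{eq:KS} has linearly unstable low Fourier modes and sensitive, chaotic dynamics, the evolution operator exhibits substantial transient growth over this integration time, so keeping each diagonal entry $Z^{(m)}_m$ below $1$ — which is necessary for the spectral radius of the lower-triangular $Z$ to be below $1$ — forces a fine and carefully placed subdivision. Moreover, the derivative nonlinearity $\tfrac12\partial_x u^2$ makes the linearized blocks non-self-adjoint, so their near-diagonalization can be badly conditioned on some subdomains, inflating $\|(Q^{(m)})^{-1}\|$ and hence $Z$; this is handled by the remedies of Remark~\ref{rem:diagonalization} (diagonalizing a small perturbation) together with the conservative non-uniform grid refinement of Section~\ref{sec:parameters} near those regions. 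A secondary difficulty is making all the $Y^{(m)}$ small simultaneously: the sharp spatial structures visible in Figure~\ref{fig:KS} require a sufficiently large $\Nu$, and each subdomain needs a high enough Chebyshev order $K^{(m)}$ together with an accurate interpolation-error bound $\tilde K^{(m)}$.
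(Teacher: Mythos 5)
Your proposal correctly describes the computer-assisted methodology of Sections~\ref{sec:onedomain}--\ref{sec:domaindecomposition} (including the adaptive parameter strategies of Section~\ref{sec:parameters}) applied to the Kuramoto--Sivashinsky equation, which is exactly how the paper proceeds: the paper's own ``proof'' of this theorem is the implementation at~\cite{integratorcode_new} rather than a written argument. Your identification of the main obstacles --- the non-self-adjoint linearization with sensitively conditioned near-diagonalizations, and the need for a carefully refined non-uniform grid keeping each diagonal entry $Z^{(m)}_m$ below one --- matches the discussion in Remark~\ref{rem:diagonalization} and Section~\ref{sec:parameters}.
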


The orbit validated in Theorem~\ref{th:KS} seems to be very near a stable time-periodic solution of the Kuramoto--Sivashinsky equation. We did not attempt to prove the existence of this nearby periodic orbit with our rigorous integrator (this would require a boundary value setup which is beyond the scope of this work, but close to what we plan to do for connecting orbits in ongoing work). However, we note that there exist several computer-assisted works allowing to directly study periodic orbits of the Kuramoto--Sivashinsky equation~\cite{Zgl10,GamLes17,FigLla17}. In particular, after the rescaling
\begin{align*}
u= -2\sqrt{\alpha}v,\quad t = \frac{1}{\alpha} s,\quad x = \frac{1}{\sqrt{\alpha}} y, 
\end{align*} 
the Kuramoto-Sivashinsky equation~\eqref{eq:KS} becomes
\begin{align*}
\dfrac{ \partial v }{ \partial s} = 
			-\alpha\dfrac{ \partial^{4} v}{ \partial y^{4}} -\dfrac{ \partial^{2} v}{ \partial y^{2}} +2v\dfrac{ \partial v}{ \partial y} , \quad (t,x) \in (0,\alpha\tend] \times \TT_{2\pi},
\end{align*}
and the orbit obtained in Theorem~\ref{th:KS} is close to one of the truly time-periodic orbits first obtained in~\cite{Zgl04}.

\section*{Acknowledgments}

MB is supported by the ANR project CAPPS: ANR-23-CE40-0004-01.

\bibliographystyle{abbrv}
\bibliography{bibfile} 

\end{document}